\newcommand{\lp}{(}\newcommand{\rp}{)}
\newcommand{\NN}{\mathbb{N}}
\newcommand{\RR}{\mathbb{R}}
\newcommand{\Sp}{\mathbb{S}}
\newcommand{\PP}{\mathbb{P}}
\newcommand{\EE}{\mathbb{E}}
\newcommand{\blank}{{\mspace{2mu}\cdot\mspace{2mu}}}
\DeclareMathOperator*{\argmin}{arg\,min}
\theoremstyle{plain}
\newtheorem{theorem}{Theorem}
\newtheorem{lemma}{Lemma}
\newtheorem{corollary}{Corollary}
\theoremstyle{remark}
\newtheorem{remark}{Remark}
\newtheorem{example}{Example}
\theoremstyle{definition}
\newtheorem{definition}[theorem]{Definition} 
\title{Nonparametric inference for Poisson-Laguerre tessellations}
\author{Thomas van der Jagt}
\author{Geurt Jongbloed}
\author{Martina Vittorietti}
\affil{Delft Institute of Applied Mathematics, Delft University of Technology.}
\date{January 15, 2025}
\begin{document}

\maketitle

\begin{abstract}
    In this paper, we consider statistical inference for Poisson-Laguerre tessellations in $\RR^d$. The object of interest is a distribution function $F$ which uniquely determines the intensity measure of the underlying Poisson process. Two nonparametric estimators for $F$ are introduced which depend only on the points of the Poisson process which generate non-empty cells and the actual cells corresponding to these points. The proposed estimators are proven to be strongly consistent, as the observation window expands unboundedly to the whole space. We also consider a stereological setting, where one is interested in estimating the distribution function associated with the Poisson process of a higher dimensional Poisson-Laguerre tessellation, given that a corresponding sectional Poisson-Laguerre tessellation is observed.
\end{abstract}

\section{Introduction}
Tessellations have proven to be useful in a wide range of fields. For example, a Poisson-Voronoi tessellation may serve as a model for a wireless network \cite{Baccelli2009}. In cosmology, Voronoi tessellations can be used to describe the distribution of galaxies \cite{vdWeygaert1994}. Another important field of application is materials science. There, a Laguerre tessellation may be fitted to the so-called microstructure of a material. For instance, Laguerre tessellations were found to be accurate models for foams \cite{Lautensack2008b}, \cite{Liebscher2015}, sintered alumina \cite{Falco2017} and composites \cite{Wu2010}. A challenge in this field is that in practice often only 2D microscopic images of cross sections of the 3D microstructure can be obtained. By studying a 3D object via a 2D slice there is evidently a loss of information. Inverse problems of this type, which involve the estimation of higher dimensional information from lower dimensional observations, belong to the field of stereology.

In this paper, we focus on statistical inference for a particular class of random tessellations known as Poisson-Laguerre tessellations. We do this both for the case where one directly observes a tessellation as well as for the case where the observed tessellation is obtained by intersecting a higher dimensional tessellation with a hyperplane. The latter type of tessellation is often referred to as a sectional tessellation. A Laguerre tessellation in $\RR^d$ is defined via a set of weighted points $\eta = \{(x_1,h_1),(x_2,h_2),\dots\}$, called generators. Here, $x_i$ is a point in $\RR^d$ and $h_i>0$ its weight. Each generator corresponds to a set, which is either a polytope or the empty set. This set is usually called a cell and we may also say that a generator generates this cell. The non-empty cells form a tessellation, meaning that these cells have disjoint interiors and the union of these cells equals $\RR^d$. We refer to the subset $\eta^* \subset \eta$ of points which generate non-empty cells as the extreme points of $\eta$. A Poisson-Laguerre tessellation, which is a random tessellation, is obtained by taking $\eta$ to be a Poisson (point) process on $\RR^d \times (0,\infty)$. The intensity measure of $\eta$ is assumed to be of the form $\nu_d \times \mathbb{F}$. Here, $\nu_d$ is Lebesgue measure on $\RR^d$ and $\mathbb{F}$ is a non-zero locally finite measure concentrated on $(0,\infty)$. An example of a realization of a Poisson-Laguerre tessellation, and the corresponding realization of extreme points is shown in Figure \ref{figure_discrete_laguerre_example}. Random Laguerre tessellations generated by an independently marked Poisson process were first studied in \cite{Lautensack2007} and \cite{Lautensack2008}. We mostly follow the description of Poisson-Laguerre tessellations as given in \cite{Gusakova2024_2}. Additionally, we will also rely on the result from \cite{Gusakova2024_2} which states that the sectional Poisson-Laguerre tessellation is again a Poisson-Laguerre tessellation. The so-called $\beta$-Voronoi tessellation as introduced in \cite{Gusakova2022} may be seen as a parametric model for a Poisson-Laguerre tessellation. In \cite{Gusakova2024} it was shown that the sectional Poisson-Voronoi tessellation is in fact a $\beta$-Voronoi tessellation.

Because tessellations are usually not directly observed in nature, typically the first step towards statistical inference for tessellations is a reconstruction step. Such a reconstruction method is used to obtain a tessellation from an image, for details see section 9.10.1 in \cite{Chiu2013} and references therein. Therefore, when applying the methodology in this paper to real data, it needs to be combined with such a reconstruction method. It is important to point out that the reconstruction methods used in \cite{Lautensack2008b}, \cite{Liebscher2015} and \cite{Seitl2021} reconstruct a Laguerre tessellation along with the extreme points simultaneously. Effectively, statistical inference for a Poisson-Laguerre tessellation is then reduced to statistical inference for the point process $\eta^*$. This appears to be the most common approach towards statistical inference for random Laguerre tessellations, and this is also the approach we take. For instance, in \cite{Seitl2022} a methodology is proposed for statistical inference for Laguerre tessellations, where parametric models are considered for the underlying point process. In \cite{Stoyan2021}, a Laguerre tessellation, along with the corresponding extreme points, is fitted to real data. Furthermore, a statistical analysis is performed on this point process of extreme points. 

\begin{figure}[t!]
    \centering
    \makebox[\textwidth]{\makebox[\textwidth]{
    \begin{subfigure}[t]{0.5\textwidth}
        \centering
        \includegraphics[width=0.9\linewidth]{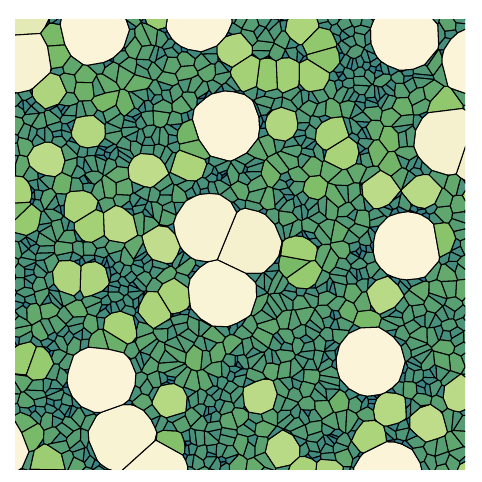}
    \end{subfigure}
    \begin{subfigure}[t]{0.5\textwidth}
        \centering
        \includegraphics[width=0.9\linewidth]{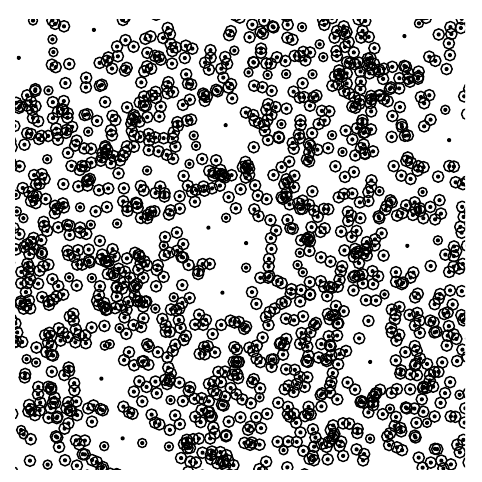}
    \end{subfigure}\hfill}}
    \caption{Left: A realization of a planar Poisson-Laguerre tessellation. Cells are colored according to their area. Right: The corresponding realization of extreme points. Around each point there is a circle with radius proportional to the weight of the point.}
    \label{figure_discrete_laguerre_example}
\end{figure}

Recall that the intensity measure of the underlying Poisson process $\eta$ is assumed to be of the form $\nu_d \times \mathbb{F}$. For $z \geq 0$ we define $F(z) := \mathbb{F}((0,z])$, the distribution function of $\mathbb{F}$. Note that this distribution function is the only parameter in this model to be estimated. In this paper, we define nonparametric estimators for $F$. These estimators for $F$ depend on both the observed Laguerre cells in a bounded observation window as well as the points of $\eta^*$ in the same window. These estimators are proven to be consistent as the observation window expands unboundedly to the whole of $\RR^d$. Additionally, we consider the stereological setting where the observed Poisson-Laguerre tessellation in $\RR^{d-1}$ is obtained by intersecting a Poisson-Laguerre tessellation in $\RR^d$ with a hyperplane. Based on this observed sectional tessellation we introduce an estimator for the distribution function corresponding to the Poisson process of the higher dimensional tessellation.

This paper is organized as follows. In section \ref{section_preliminaries} we introduce necessary notation and definitions. Then, the main mathematical object of interest, the Poisson Laguerre tessellation, is discussed in section \ref{section_introduction_pois_laguerre}. In section \ref{section_first_estimator} we introduce our first estimator for $F$, which is based on a thinning of the extreme points. A second estimator for $F$ is introduced in section \ref{section_second_estimator}, which depends on all observed extreme points, as well as the volumes of the corresponding Laguerre cells. In section \ref{section_stereology} we consider statistical inference for Poisson-Laguerre tessellations in a stereological setting. Finally, in section \ref{section_simulations} we perform a simulation study for the proposed estimators

\section{Preliminaries}\label{section_preliminaries}

In this section we introduce notation and various definitions which we need throughout this paper. Let $\nu_d$ denote Lebesgue measure on $\RR^d$, and $\sigma_{d-1}$ Lebesgue measure on the sphere $\Sp^{d-1} = \{x \in \RR^d: \Vert x \Vert = 1\}$, also known as the spherical measure. Given $x \in \RR^d$ and $r > 0$, we write $B(x, r) = \{y \in \RR^d : \Vert x-y\Vert < r\}$ and $\bar{B}(x, r) = \{y \in \RR^d : \Vert x-y\Vert \leq r\}$ for the open and closed ball respectively, with radius
$r$ centered at $x$. We also introduce the following constants:
\[\kappa_d := \nu_d\left(\Bar{B}(0,1)\right) = \frac{2\pi^\frac{d}{2}}{\Gamma\left(1 + \frac{d}{2} \right)} \text{, \ and \ } \omega_d := \sigma_{d-1}\left(\Sp^{d-1}\right) = \frac{2\pi^\frac{d}{2}}{\Gamma\left(\frac{d}{2} \right)}\]
Let $A, B \subset \RR^d$, then the sum of sets is defined as: $A + B = \{a + b: a \in A, b \in B\}$. If $x \in \RR^d$, we also write: $A + x = \{a+x: a \in A\}$. Let $\mathcal{F}_{+}$ denote the space of all (not necessarily bounded) distribution functions on $(0,\infty)$.

We now introduce several definitions related to point processes. While these definitions are valid for point processes in much more general spaces, in this paper we only consider point processes on $\RR^d \times (0,\infty)$. For more background on the theory of point processes we refer to \cite{Daley2008} and \cite{Last2018}. Suppose $\mathbb{X} = \RR^d \times (0,\infty)$, and let $(\Omega, \mathcal{A},\PP)$ be a probability space. A measure $\mu$ on $\mathbb{X}$ is locally finite if $\mu(B) < \infty$ for all bounded $B \in \mathcal{B}(\mathbb{X})$. Here, $\mathcal{B}(\mathbb{X})$ denotes the Borel $\sigma$-algebra of $\mathbb{X}$. Let $\mathbf{N}(\mathbb{X})$ denote the space of locally finite counting measures (integer-valued measures) on $\mathbb{X}$. We equip $\mathbf{N}(\mathbb{X})$ with the usual $\sigma$-algebra $\mathcal{N}(\mathbb{X})$, which is the smallest $\sigma$-algebra on $\mathbf{N}(\mathbb{X})$ such that the mappings $\mu \mapsto \mu(B)$ are measurable for all $B \in \mathcal{B}(\mathbb{X})$. A point process on $\mathbb{X}$ is a random element $\eta$ of $(\mathbf{N}(\mathbb{X}), \mathcal{N}(\mathbb{X}))$, that is a measurable mapping $\eta:\Omega \to \mathbf{N}(\mathbb{X})$. The intensity measure of a point process $\eta$ on $\mathbb{X}$ is the measure $\Lambda$ defined by $\Lambda(B):= \EE(\eta(B))$, $B \in \mathcal{B}(\mathbb{X})$. 

\begin{definition}
    Suppose $\Lambda$ is a $\sigma$-finite measure on $\mathbb{X}$. A Poisson process with intensity measure $\Lambda$ is a point process $\eta$ on $\mathbb{X}$ with the following two properties:
\begin{enumerate}
    \item For every $B \in \mathcal{B}(\mathbb{X})$, the random variable $\eta(B)$ is Poisson distributed with mean $\Lambda(B)$.
    \item For every $m \in \NN$ and pairwise disjoint sets $B_1,\dots,B_m \in \mathcal{B}(\mathbb{X})$, the random variables\\ $\eta(B_1),\dots,\eta(B_m)$ are independent.
\end{enumerate}
\end{definition}

Let $\delta$ denote the Dirac measure, hence for $x \in \mathbb{X}$ and $B \in \mathcal{B}(\mathbb{X})$: $\delta_x(B) = \mathds{1}\{x \in B\}$. A counting measure $\mu$ on $\mathbb{X}$ is called simple if $\mu(\{x\}) \leq 1$ for all $x \in \mathbb{X}$. As such, a simple counting measure has no multiplicities. Similarly, a point process $\eta$ on $\mathbb{X}$ is called simple if $\PP\left(\eta(\{x\}) \leq 1, \text{ } \forall x \in \mathbb{X} \right) = 1$. Let $\mathbf{N}_s(\mathbb{X})$ be the subset of $\mathbf{N}(\mathbb{X})$ containing all simple measures. Define: $\mathcal{N}_s(\mathbb{X}):= \{A \cap \mathbf{N}_s(\mathbb{X}): A \in \mathcal{N}(\mathbb{X})\}$. Then, a simple point process on $\mathbb{X}$ may be seen as a random element $\eta$ of $(\mathbf{N}_s(\mathbb{X}), \mathcal{N}_s(\mathbb{X}))$. If a point process is simple it is common to identify the point process with its support, and view the point process as a random set of discrete points in $\mathbb{X}$. We may for example write $x \in \eta$ instead of $x \in \mathrm{supp}(\eta)$. It is common practice to switch between the interpretations of a simple point process as a random counting measure or as a random set of points, depending on whichever interpretation is more convenient. We will also do this throughout this paper. Enumerating the points of a simple point process in a measurable way we may write:
\[\eta = \{x_1,x_2,\dots\}, \text{ \ and \ } \eta = \sum_{i \in \NN}\delta_{x_i}.\]
For $v \in \RR^d$ let $S_v$ denote the shift operator. Suppose $\eta = \{(x_1,h_1), (x_2, h_2),\dots\}$ is a point process with $x_i \in \RR^d$ and $h_i > 0$. Then, we define $S_v\eta := \{(x_1-v,h_1),(x_2-v,h_2),\dots\}$. Additionally, for a deterministic set $B \subset \RR^d \times (0,\infty)$ we define $S_vB := \{(x+v,h): (x,h) \in B\}$. Note that in the random counting measure interpretation of a point process, the definition is as follows: $S_v\eta(B) := \eta\left(S_vB \right)$, for $B\in \mathcal{B}(\RR^d\times(0,\infty))$. This is indeed consistent with the previous definition since $S_v\eta(B) = \sum_i \delta_{(x_i,h_i)}(S_vB) = \sum_i \delta_{(x_i-v,h_i)}(B)$. We call $\eta$ stationary if $S_v\eta$ and $\eta$ are equal in distribution for all $v \in \RR^d$. Throughout this paper, $(W_n)_{n \geq 1}$ is a fixed convex averaging sequence. That is, each $W_n \subset \RR^d$ is convex and compact, and the sequence is increasing: $W_n \subset W_{n+1}$. Finally, the sequence $(W_n)_{n \geq 1}$ expands unboundedly: $\sup\{r \geq 0: B(x,r) \subset W_n \text{ for some } x \in W_n\} \to \infty$ as $n \to \infty$.

\section{Poisson-Laguerre tessellations}\label{section_introduction_pois_laguerre}
In this section we describe the main mathematical object of interest in this paper, the Poisson-Laguerre tessellation. This random tessellation is a generalization of the well-known Poisson-Voronoi tessellation, and was first studied in \cite{Lautensack2007} and \cite{Lautensack2008}. We will mostly follow the description of the Poisson-Laguerre tessellation as given in \cite{Gusakova2024_2}, which is subtly different. Let us start with the definition of a tessellation:
\begin{definition}
    A tessellation of $\RR^d$ is a countable collection $T = \{C_i: i\in \NN\}$, of sets $C_i \subset \RR^d$ (the cells of the tessellation) such that:
    \begin{itemize}
        \item $\mathrm{int}(C_i)\cap \mathrm{int}(C_j) = \emptyset$, if $i \neq j$. 
        \item $\cup_{i \in \NN} C_i = \RR^d$.
        \item $T$ is locally finite: $\#\{i\in \NN: C_i \cap B \neq \emptyset\} < \infty$ for all bounded $B \in \mathcal{B}(\RR^d)$.
        \item Each $C_i$ is a compact and convex set with interior points.
    \end{itemize}
\end{definition}

Now, we will introduce the Laguerre diagram. Let $\varphi = \{(x_i,h_i)\}_{i \in \NN}$, with $x_i \in \RR^d$ and $h_i > 0$. Assume moreover that $x_i \neq x_j$ for $i\neq j$. The Laguerre cell associated with $(x,h) \in \varphi$ is defined as:
\begin{equation}
    C((x,h), \varphi) = \left\{y \in \RR^d: \Vert y-x \Vert^2 + h \leq \Vert y-x' \Vert^2 + h' \text{ for all } (x',h') \in \varphi\right\}. \label{eq_Laguerre_cell_def}
\end{equation}
The Laguerre diagram generated by $\varphi$ is the set of non-empty Laguerre cells, and is denoted by $L(\varphi)$: 
\[L(\varphi) := \left\{C((x,h), \varphi): (x,h) \in \varphi \text{ and } C((x,h), \varphi) \neq \emptyset\right\}.\]
A Laguerre diagram is not necessarily a tessellation, conditions on $\varphi$ are needed to ensure that $L(\varphi)$ is locally finite and that all cells are bounded. As we will discuss in a moment, the random Laguerre diagrams we consider are in fact tessellations. A Laguerre diagram has an interesting interpretation as a crystallization process. From the definition of a Laguerre cell it follows that:
\[x \in C((x_i,h_i),\varphi) \iff \exists t \geq h_i: x \in \bar{B}\left(x_i,\sqrt{t - h_i}\right) \text{ and } x \notin \bigcup_{j \neq i} B\left(x_j, \sqrt{(t-h_j)_{+}}\right),\]
with $(x)_{+} = \max\{x, 0\}$. Hence, we may consider the ball $B_i(t) := \bar{B}(x_i, \sqrt{(t-h_i)_{+}})$ which starts growing at time $t = h_i$. The ball initially grows fast, and then its growth slows down. If $B_i$ is the first ball to hit a given point $x \in \RR^d$, then $x \in C((x_i,h_i),\varphi)$. It is possible that $x_i$ lies in another cell $C((x_j,h_j),\varphi)$, $i\neq j$ and yet $C((x_i,h_i),\varphi)$ may be non-empty. It is also possible that a pair $(x_i,h_i)$ does not generate a cell, essentially because its ball starts growing too late. A visualization of the crystallization process is given in Figure \ref{fig:crystallization}.

\begin{figure}[b!]
    \centering
    \makebox[\textwidth]{\makebox[\textwidth]{
    \begin{subfigure}[t]{0.25\textwidth}
        \centering
        \includegraphics[width=\linewidth]{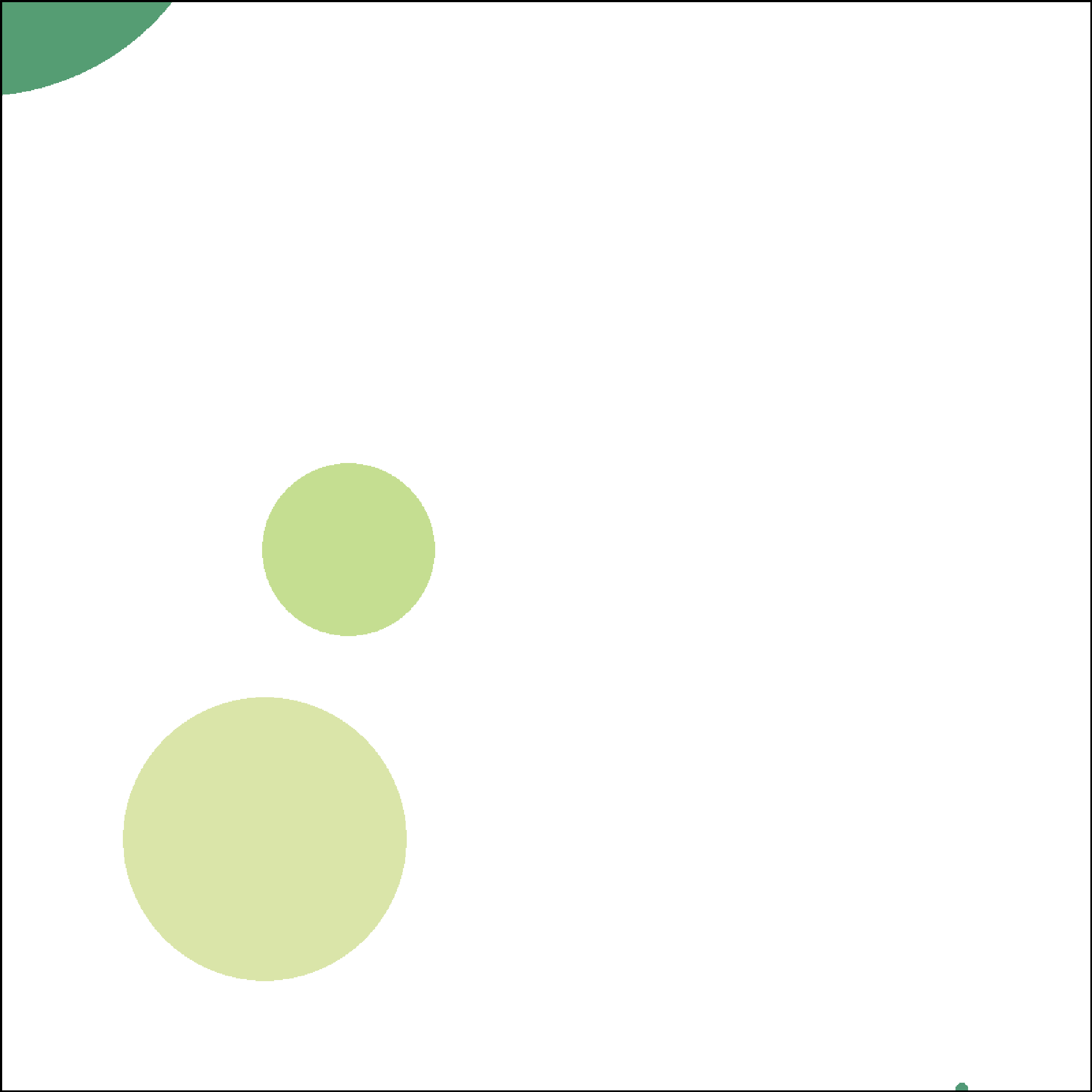}
    \end{subfigure}
    \begin{subfigure}[t]{0.25\textwidth}
        \centering
        \includegraphics[width=\linewidth]{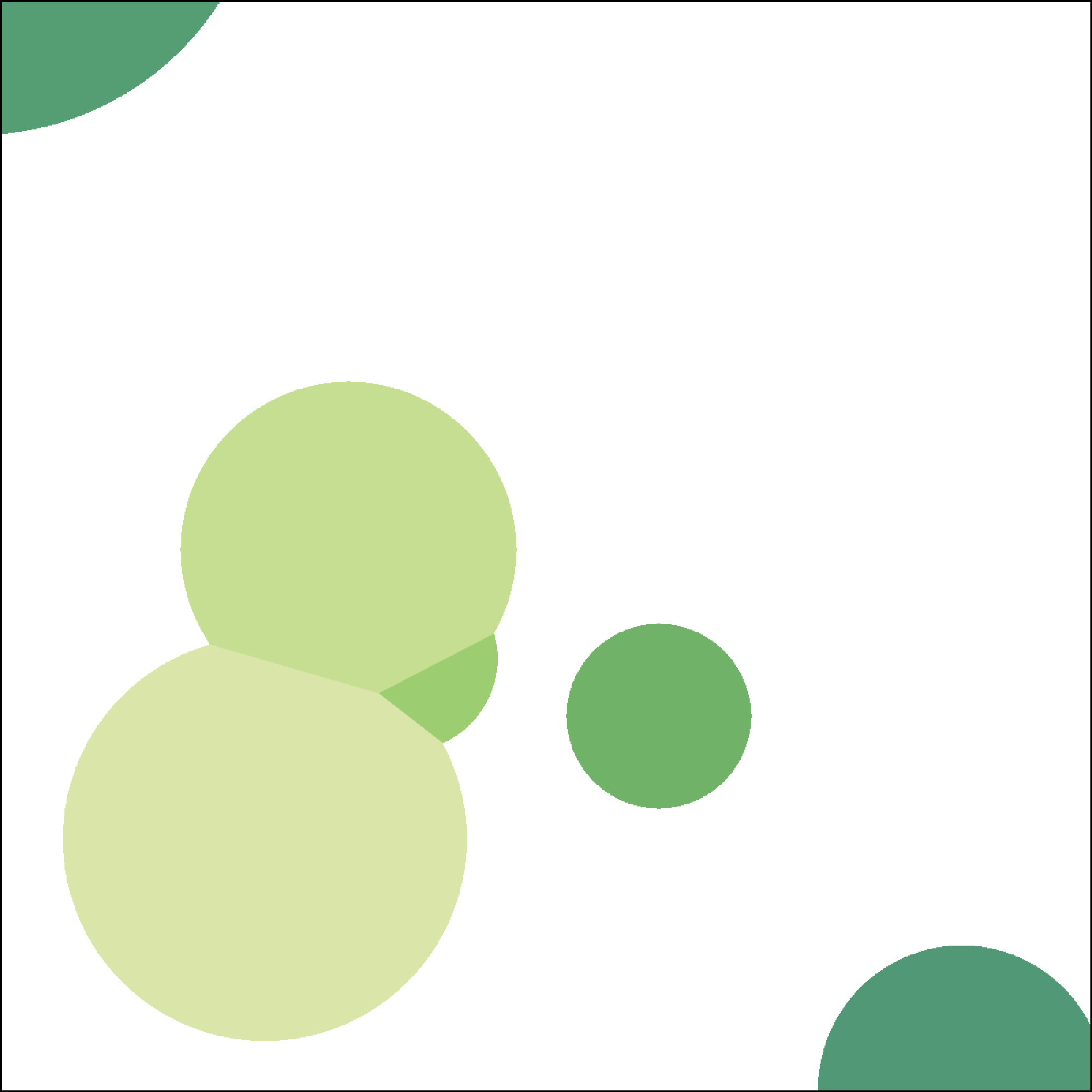}
    \end{subfigure}
    \begin{subfigure}[t]{0.25\textwidth}
        \centering
        \includegraphics[width=\linewidth]{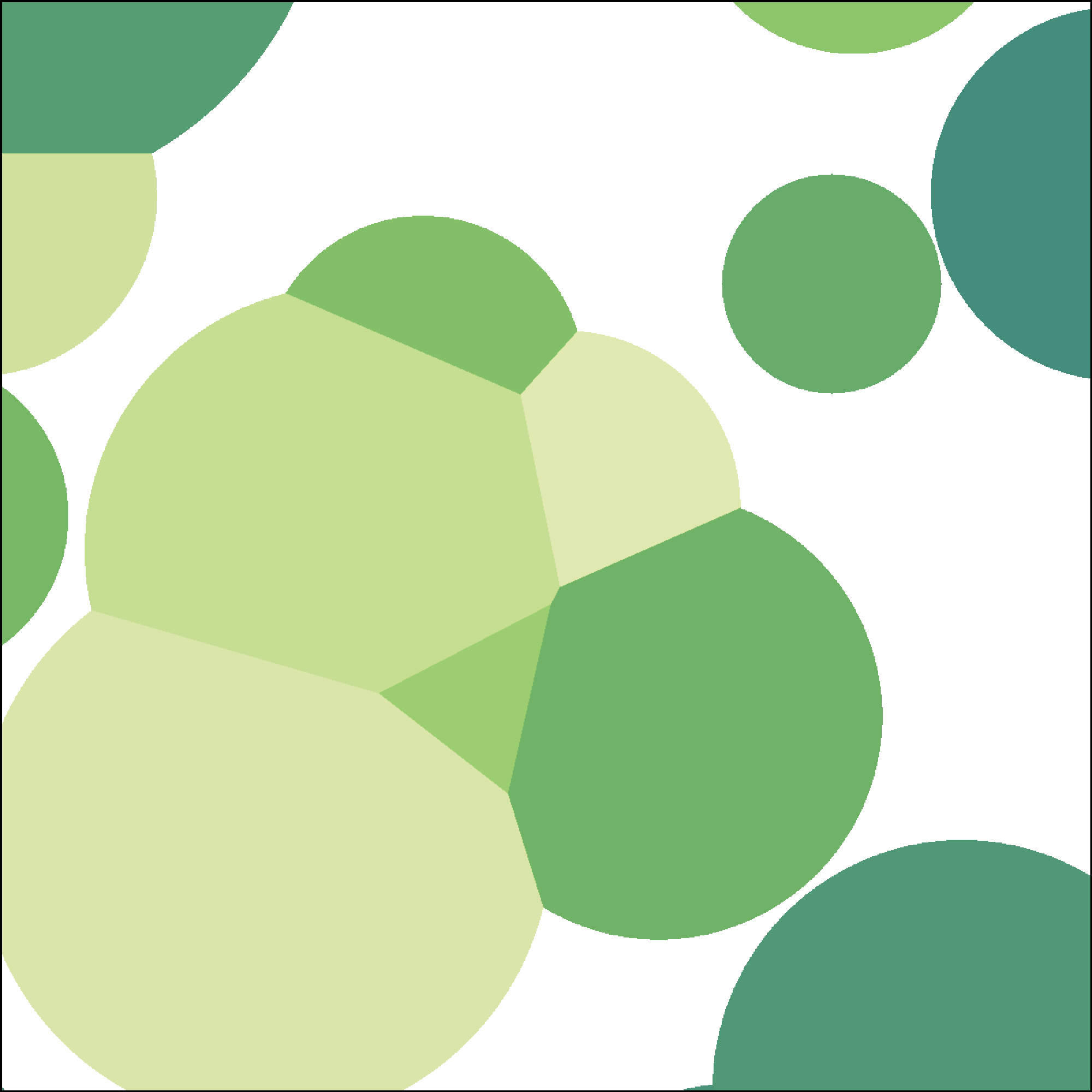}
    \end{subfigure}
    \begin{subfigure}[t]{0.25\textwidth}
        \centering
        \includegraphics[width=\linewidth]{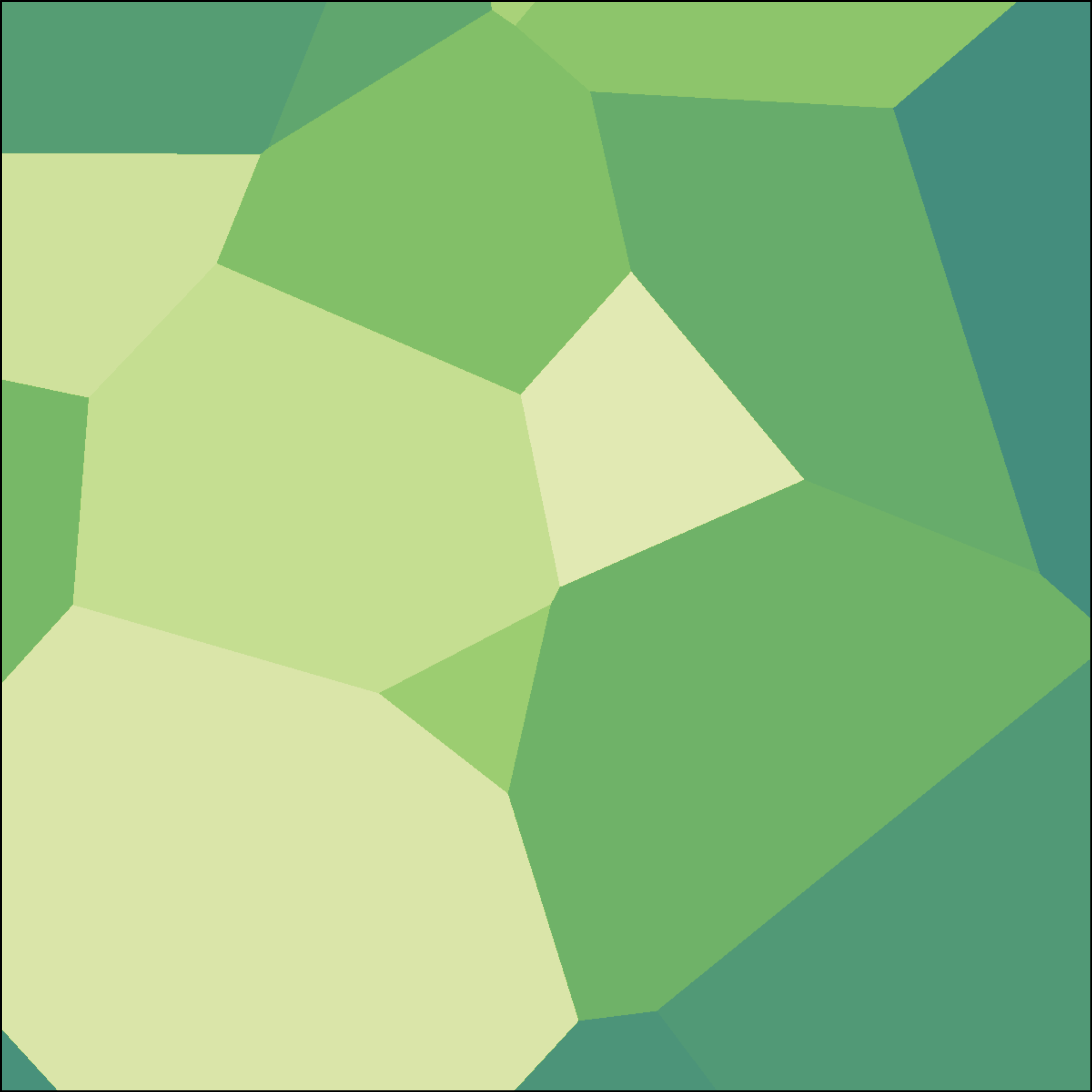}
    \end{subfigure}\hfill}}
    \caption{Visualization of the crystallization process. From left to right, the crystallization process is shown at times $t = 60$, $t=80$, $t=120$ and $t=280$.}
    \label{fig:crystallization}
\end{figure}

Throughout this paper we assume that $\eta$ is a Poisson process on $\RR^d \times (0,\infty)$ with intensity measure $\nu_d \times \mathbb{F}$. Here, $\mathbb{F}$ is a locally finite measure concentrated on $(0,\infty)$. Because the measure $\nu_d \times \mathbb{F}$ has no atoms, $\eta$ is a simple point process. From proposition 3.6. in \cite{Gusakova2024_2} it follows that $L(\eta)$, the Laguerre diagram generated by the Poison process $\eta$, is with probability one a tessellation. We refer to $L(\eta)$ as the Poisson-Laguerre tessellation generated by $\eta$. We do note that in the aforementioned paper it is additionally assumed that $\mathbb{F}$ is absolutely continuous with respect to Lebesgue measure. However, this assumption is not needed for $L(\eta)$ to be a tessellation with probability one, as this is a straightforward modification of the proofs given in \cite{Gusakova2024_2}. 

For $z \geq 0$ we define: $F(z) := \mathbb{F}((0,z])$. Thereby, this monotone function $F$ is the only parameter in this model to be estimated. Note that $F$ is not necessarily bounded, it is bounded if and only if $\mathbb{F}$ is a finite measure. In the introduction of this paper we explained that we are interested in estimators for $F$ which depend on the observed Laguerre cells and the extreme points of $\eta$, which we denote by $\eta^*$:
\[\eta^* := \left\{(x,h) \in \eta: C((x,h),\eta)\neq \emptyset\right\}.\]
To be precise, the estimators we propose for $F$ depend on the points of $\eta^*$ in the observation window $W_n$, as well as the Laguerre cells corresponding to these points of $\eta^*$ in $W_n$. Recall, $(W_n)_{n \geq 1}$ is some fixed convex averaging sequence. The reader may for example keep $W_n = [-n,n]^d$ in mind as an explicit example. Note that the point process $\eta^*$ may be seen as a (dependent) thinning of $\eta$, and is not necessarily a Poisson process. We conclude this section with a simulation example, with the purpose of providing an intuitive understanding of Poisson-Laguerre tessellations. 

\begin{example}\label{example_discrete_laguerre_text}
  In Figure \ref{figure_discrete_laguerre_example} a realization is shown of a planar Poisson-Laguerre tessellation along with its realization of extreme points. Here, we have taken $\mathbb{F}$ to be a discrete probability measure on $\{1,8,10\}$. Specifically, $\mathbb{F}$ is defined as: $\mathbb{F}(\{ 1\}) = 0.01$, $\mathbb{F}(\{8\}) = 0.04$ and $\mathbb{F}(\{10\}) = 0.95$. Hence, $\eta$ may be seen as an independently marked homogeneous Poisson process, with points in $\RR^2$ and marks in $\{1,8,10\}$. The homogeneous Poisson process has intensity 1 and the marks are distributed according to $\mathbb{F}$. Let us briefly discuss the image in Figure \ref{figure_discrete_laguerre_example} in view of the crystallization process interpretation. Given the choice of $\mathbb{F}$, we expect a small number of balls corresponding to points with weight $h=1$, these balls start growing early, and result in large cells. A larger number of points with weight $h=8$ have balls associated with them which start growing later, yielding cells which are a bit smaller. Finally, a very large number of points with weight $h=10$ will generate even smaller cells.   
\end{example}

\section{Inference via a dependent thinning}\label{section_first_estimator}
\subsection{Definition of an estimator}
In this section, we define our first estimator for $F$. This estimator only depends on points $(x,h)$ of $\eta^*$ with $x \in W_n$ and for which $x$ is located in its own Laguerre cell. The estimator is easy to compute, and the techniques used in this section will be important when we define an estimator for $F$ based on all points of $\eta^*$ in $W_n \times (0,\infty)$. Recall from the previous section that $\eta$ is a Poisson process on $\RR^d \times (0,\infty)$, $d \geq 2$, with intensity measure $\nu_d \times \mathbb{F}$. We may also write: $\eta = \{(x_1,h_1), (x_2,h_2),\dots\}$, with $x_i \in \RR^d$, $h_i > 0$. We start as follows, let $y \in \RR^d$, and consider the following thinning of $\eta$:
\begin{equation}
    \eta^y := \left\{(x,h) \in \eta: x+y\in C((x,h),\eta)\right\}. \label{dependent_thinning_def}
\end{equation}
In (\ref{eq_Laguerre_cell_def}) we defined $C((x,h), \eta)$, which denotes the Laguerre cell associated with the weighted point $(x,h) \in \eta$. Evidently, for every $y \in \RR^d$, $\eta^y$ only contains a subset of points of $\eta^*$. Hence, we have: $\eta^y \subset \eta^* \subset \eta$. In particular, for $y=0$ we obtain the set of points of $\eta^*$ which are contained within their own Laguerre cell. In the following lemma we compute the intensity measure of $\eta^y$.

\begin{lemma}\label{dependent_thinning_measure}
    Let $B \in \mathcal{B}(\RR^d)$, $y\in \RR^d$ and $z \geq 0$, the intensity measure $\Lambda^y$ of $\eta^y$ satisfies:
\begin{align*}
    \Lambda^y\left(B \times (0,z] \right) = \nu_d(B)\int_0^z \exp\left(-\kappa_d \int_0^{\Vert y \Vert^2 + h} \left(\Vert y \Vert^2 + h -t \right)^{\frac{d}{2}}\mathrm{d}F(t) \right)\mathrm{d}F(h). 
\end{align*}
\end{lemma}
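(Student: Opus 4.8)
The plan is to compute $\Lambda^y(B \times (0,z])$ via the Mecke equation for the Poisson process $\eta$. By definition,
\[
\Lambda^y(B \times (0,z]) = \EE \sum_{(x,h) \in \eta} \mathds{1}\{x \in B,\ h \le z\}\, \mathds{1}\{x + y \in C((x,h),\eta)\}.
\]
Applying the Mecke formula turns this into
\[
\int_B \int_0^z \PP\big(x + y \in C((x,h), \eta \cup \{(x,h)\})\big)\, \mathrm{d}F(h)\, \mathrm{d}x,
\]
so the task reduces to computing, for fixed $(x,h)$, the probability that the point $x+y$ lies in the Laguerre cell of $(x,h)$ with respect to $\eta$ augmented by $(x,h)$ itself. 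First I would use translation invariance of $\nu_d$ (the spatial part of the intensity) to reduce to $x = 0$, so the integrand no longer depends on $x$, which already produces the factor $\nu_d(B)$ and the outer $\int_0^z \cdots \mathrm{d}F(h)$.

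Next I would unpack the event $\{x+y \in C((0,h), \eta \cup \{(0,h)\})\}$ using the definition \eqref{eq_Laguerre_cell_def}: it holds iff for every $(x', h') \in \eta$ we have $\Vert y \Vert^2 + h \le \Vert y - x' \Vert^2 + h'$, i.e. $(x', h') \notin A_{y,h}$ where
\[
A_{y,h} := \{(x', h') \in \RR^d \times (0,\infty) : \Vert y - x' \Vert^2 + h' < \Vert y \Vert^2 + h\}.
\]
Since $\eta$ is Poisson with intensity $\nu_d \times \mathbb{F}$, the probability that $\eta$ places no point in $A_{y,h}$ is $\exp(-(\nu_d \times \mathbb{F})(A_{y,h}))$. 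So the remaining work is to evaluate this measure. Slicing $A_{y,h}$ over the weight coordinate $h'$: for fixed $h' = t$, the admissible $x'$ form the ball $\{x' : \Vert y - x' \Vert^2 < \Vert y \Vert^2 + h - t\}$, which is nonempty precisely when $t < \Vert y \Vert^2 + h$, and then has $\nu_d$-measure $\kappa_d (\Vert y \Vert^2 + h - t)^{d/2}$. Integrating over $t \in (0, \Vert y \Vert^2 + h)$ against $\mathrm{d}F$ gives $(\nu_d \times \mathbb{F})(A_{y,h}) = \kappa_d \int_0^{\Vert y \Vert^2 + h} (\Vert y \Vert^2 + h - t)^{d/2}\, \mathrm{d}F(t)$, which is exactly the exponent in the claimed formula.

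The only genuinely delicate point is the application of the Mecke equation: one must be careful that the Laguerre cell in the thinning \eqref{dependent_thinning_def} is defined with respect to $\eta$, and under Mecke's formula a point $(x,h)$ being "added" must be compared against $\eta \cup \{(x,h)\}$ — but since $(x,h)$ is always in its own cell trivially (the inequality $\Vert y - x\Vert^2 + h \le \Vert y - x\Vert^2 + h$ with the added point itself is an equality), adding the point back in does not change the event, so the computation goes through cleanly. A minor additional check is that $A_{y,h}$ is a bounded Borel set so the Poisson void probability formula applies and the measure is finite; this follows since $\mathbb{F}$ is locally finite and the ball radii are bounded by $\sqrt{\Vert y\Vert^2 + h}$. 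Everything else is the routine Fubini/polar-coordinates computation sketched above, and assembling the pieces yields the stated expression for $\Lambda^y(B \times (0,z])$.
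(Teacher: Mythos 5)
Your proposal is correct and follows essentially the same route as the paper: apply the Mecke equation, observe that adding the point $(x,h)$ back contributes only a tautological inequality so the cell (and hence the event) is unchanged, compute the Poisson void probability of the region $A_{y,h}$, and evaluate $(\nu_d\times\mathbb{F})(A_{y,h})$ by slicing over the weight coordinate. The only cosmetic differences are that you reduce to $x=0$ by translation invariance up front while the paper simply notes the integrand is $x$-free after the computation, and the paper phrases the Mecke check as $(x,h)\notin A_{x,h,y}$ rather than as an equality in the defining inequality; these are equivalent.
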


This intensity measure can be computed via the Mecke equation, which may for example be found in Theorem 4.1 in \cite{Last2018}. The statement is as the follows:

\begin{theorem}[Mecke equation]
    Let $\Lambda$ be a $\sigma$-finite measure on a measurable space $(\mathbb{X}, \mathcal{X})$ and let $\eta$ be a point process on $\mathbb{X}$. Then $\eta$ is a Poisson process with intensity measure $\Lambda$ if and only if:
    \[\EE\left(\sum_{x \in \eta}f\left(x,\eta\right) \right) = \int \EE\left(f\left(x,\eta+\delta_x\right) \right)\Lambda(\mathrm{d}x),\]
    for all non-negative measurable functions $f:\mathbb{X}\times \mathbf{N}(\mathbb{X}) \to [0,\infty]$. 
\end{theorem}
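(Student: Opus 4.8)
The final statement is the Mecke equation (Theorem 4.1 in \cite{Last2018}).

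The plan is to prove both directions of the equivalence: that a Poisson process satisfies the Mecke identity, and conversely that a point process satisfying the identity must be Poisson.

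For the forward direction (Poisson $\Rightarrow$ Mecke identity), I would first establish the identity for simple functions of the form $f(x,\mu) = \mathds{1}\{x \in C\}g(\mu(B_1),\dots,\mu(B_m))$, or even more simply start with $f(x,\mu) = \mathds{1}\{x \in C\}$ times an exponential-type functional, then extend by monotone class / monotone convergence arguments. The cleanest route: prove it first when $\Lambda$ is finite, using the explicit representation of a finite Poisson process as $\eta = \sum_{i=1}^{\kappa}\delta_{X_i}$ where $\kappa \sim \mathrm{Poisson}(\Lambda(\mathbb{X}))$ and $X_i$ are i.i.d.\ with law $\Lambda/\Lambda(\mathbb{X})$, independent of $\kappa$. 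Then
\[\EE\sum_{x\in\eta}f(x,\eta) = \sum_{k\ge 0}\frac{e^{-\lambda}\lambda^k}{k!}\,k\,\EE f\left(X_1,\textstyle\sum_{i=1}^k\delta_{X_i}\right),\]
with $\lambda = \Lambda(\mathbb{X})$, and a reindexing $k \mapsto k+1$ together with $k!/( (k-1)!) = \lambda \cdot (\text{stuff})$ produces exactly $\int \EE f(x,\eta+\delta_x)\,\Lambda(\mathrm{d}x)$. For $\sigma$-finite $\Lambda$, partition $\mathbb{X} = \bigcup_j \mathbb{X}_j$ into sets of finite $\Lambda$-measure, use independence and superposition of the restrictions $\eta_j = \eta\cap \mathbb{X}_j$, apply the finite case to each piece while keeping the rest of the configuration frozen, and sum up via monotone convergence. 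The main technical care here is bookkeeping the sum over $x$ split according to which $\mathbb{X}_j$ contains $x$, and justifying the interchange of sum, integral and expectation by nonnegativity (Tonelli).

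For the converse direction (Mecke identity $\Rightarrow$ Poisson), the standard approach is to pin down the Laplace functional. I would take $f(x,\mu) = \mathds{1}\{x\in B\}\,e^{-\mu(u)}$ where $\mu(u) := \int u\,\mathrm{d}\mu$ for a fixed nonnegative measurable $u$ and a fixed bounded $B$. Writing $L(u) := \EE e^{-\eta(u)}$, one evaluates $\EE\sum_{x\in\eta}\mathds{1}\{x\in B\}e^{-\eta(u)}$; on the right side $\eta + \delta_x$ contributes $e^{-u(x)}e^{-\eta(u)}$, so the Mecke identity gives a relation of the form $\EE\sum_{x\in\eta}\mathds{1}\{x\in B\}e^{-\eta(u)} = \int_B e^{-u(x)}L(u)\,\Lambda(\mathrm{d}x)$. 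Specializing $u$ to $s\cdot u$ with a parameter $s$ and differentiating, or more directly taking $u = \sum_k t_k \mathds{1}\{B_k\}$ and exploiting additivity, one derives the differential/functional equation whose unique solution is $L(u) = \exp\left(-\int (1-e^{-u})\,\mathrm{d}\Lambda\right)$. Since the Laplace functional determines the distribution of a point process, $\eta$ is Poisson with intensity $\Lambda$. Concretely I would set $g(s) := \EE\big[e^{-\eta(u)}\,;\,\text{on the region where } u \le s\big]$-type quantities or, cleaner, fix $u$, put $\psi(s) = \EE e^{-s\eta(u)}$ wait --- better: apply Mecke with $f(x,\mu)=u(x)e^{-\mu(u)}$ to get $\EE[\eta(u)e^{-\eta(u)}] = \int u(x)e^{-u(x)}\,\Lambda(\mathrm{d}x)\,L(u)$, recognize the left side as $-\frac{d}{ds}L(su)\big|_{s=1}$ after scaling $u\to su$, obtain the ODE $L'(s)/L(s) = -\int u e^{-su}\,\mathrm{d}\Lambda$ with $L(0)=1$, and integrate.

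The main obstacle is the $\sigma$-finite extension in the forward direction: one must carefully handle configurations with infinitely many points, decompose the sum over points of $\eta$ according to the partition of $\mathbb{X}$, and invoke independence of $\eta$ on disjoint sets to reduce to the finite case without circularity (we already know $\eta$ is Poisson, so we may use its independence and Poisson-count properties freely). In the converse, the delicate point is arguing that the Laplace functional identity, established for a sufficiently rich class of test functions $u$, genuinely characterizes the law of $\eta$ on $(\mathbf{N}(\mathbb{X}),\mathcal{N}(\mathbb{X}))$ --- this uses that $\mathcal{N}(\mathbb{X})$ is generated by the evaluation maps $\mu\mapsto\mu(B)$ and a monotone-class argument on finite-dimensional distributions.
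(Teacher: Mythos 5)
The paper does not prove this statement at all: it is quoted verbatim as Theorem 4.1 of Last and Penrose \cite{Last2018} and used as a black box, so there is no internal proof to compare against. Your sketch is essentially the standard textbook argument and is sound. The forward direction (mixed binomial representation for finite $\Lambda$, with the reindexing $k\,\PP(\kappa=k)=\lambda\,\PP(\kappa=k-1)$, then a superposition/partition argument plus Tonelli for the $\sigma$-finite case) is exactly the route taken in \cite{Last2018}; the only points to state cleanly are that one may assume $\eta$ proper (the identity depends only on the law of $\eta$), and that in the partition step you apply the finite case to $\eta_j$ conditionally on the independent frozen configuration $\sum_{i\neq j}\eta_i$. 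Your converse via the Laplace functional is a genuinely different (and equally standard) route from Last--Penrose, who instead take $f(x,\mu)=\mathds{1}_B(x)\mathds{1}\{\mu(B)=k\}$-type functions to derive the recursion $k\,\PP(\eta(B)=k)=\Lambda(B)\,\PP(\eta(B)=k-1)$ and its multivariate version, identifying the finite-dimensional distributions directly without invoking the Laplace-functional characterization. The ODE argument you outline works, but to make it airtight you should: restrict first to simple $u$ supported on sets of finite $\Lambda$-measure, noting that Mecke with $f(x,\mu)=\mathds{1}_B(x)$ gives $\EE\,\eta(B)=\Lambda(B)<\infty$, hence $\eta(u)<\infty$ a.s., which guarantees $L(su)>0$ and continuity of $s\mapsto L(su)$ at $s=0$; justify differentiation under the expectation for $s>0$ by domination of $\eta(u)e^{-s\eta(u)}$; and then extend by monotone convergence and use that the Laplace functional on this class of $u$ determines the law on $(\mathbf{N}(\mathbb{X}),\mathcal{N}(\mathbb{X}))$. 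The trade-off is that your route buys a shorter computation at the price of importing the Laplace-functional uniqueness theorem, whereas the count-recursion route is self-contained; either is acceptable.
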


\begin{proof}[Proof of Lemma \ref{dependent_thinning_measure}]
    By definition, the intensity measure of $\eta^y$ is given by:
\begin{align}
    \Lambda^y\left(B \times (0,z] \right) := \EE\left(\eta^y(B \times (0,z]) \right) &= \EE\left(\sum_{(x,h) \in \eta} \mathds{1}_{B}(x)\mathds{1}_{(0,z]}(h) \mathds{1}\left\{x + y \in C((x,h),\eta) \right\} \right). \label{expectation_integrand_mecke}
\end{align}
We rewrite the final indicator function in (\ref{expectation_integrand_mecke}) into a more convenient form. By the definition of a Laguerre cell, we obtain:
\begin{align}
    x + y \in C((x,h), \eta) \iff \Vert y \Vert^2 + h - h' \leq \Vert x + y-x' \Vert^2, \text{ for all } (x',h') \in \eta \iff \eta\left(A_{x,h,y}\right) = 0, \nonumber
\end{align}
where we define the set $A_{x,h,y}$ as:
\[A_{x,h,y} = \left\{(x',h') \in \RR^d \times (0,\infty): \Vert y \Vert^2 + h - h' > \Vert x + y-x' \Vert^2\right\}.\]
Since $\eta$ is a Poisson process, the random variable $\eta\left(A_{x,h,y}\right)$ is Poisson distributed with parameter $\EE(\eta\left(A_{x,h,y}\right))$. As a consequence, the probability that $\eta\left(A_{x,h,y}\right) = 0$ is given by:
\begin{align}
    \PP\left(\eta\left(A_{x,h,y}\right) = 0 \right) &= \exp\left(-\EE\left(\eta\left(A_{x,h,y}\right)\right) \right)\nonumber \\
    &= \exp\left(-\int_{\RR^d}\int_0^{\Vert y \Vert^2 + h} \mathds{1}\left\{\Vert x + y - x'\Vert < \sqrt{\Vert y \Vert^2 + h - t} \right\}\mathrm{d}F(t)\mathrm{d}x' \right)\nonumber\\
    &= \exp\left(-\int_0^{\Vert y \Vert^2 + h}\int_{\RR^d} \mathds{1}\left\{\Vert x'\Vert < \sqrt{\Vert y \Vert^2 + h - t} \right\}\mathrm{d}x'\mathrm{d}F(t) \right) \nonumber\\
    &= \exp\left(-\kappa_d \int_0^{\Vert y \Vert^2 + h} \left(\Vert y \Vert^2 + h -t \right)^{\frac{d}{2}}\mathrm{d}F(t) \right). \label{empty_set_probability}
\end{align}
Note that (\ref{empty_set_probability}) does not depend on $x$. Using (\ref{empty_set_probability}) and the Mecke equation, the expectation in (\ref{expectation_integrand_mecke}) can be computed as follows:
\begin{align}
    \EE&\left(\sum_{(x,h) \in \eta} \mathds{1}_{B}(x)\mathds{1}_{(0,z]}(h) \mathds{1}\{\eta\left(A_{x,h,y}\right) = 0\} \right) = \nonumber \\
    &= \int_0^\infty \int_{\RR^d} \mathds{1}_{B}(x)\mathds{1}_{(0,z]}(h)\PP\left(\eta\left(A_{x,h,y}\right) = 0 \right)\mathrm{d}x\mathrm{d}F(h) \label{equation_mecke_application} \\
    &= \int_0^\infty \int_{\RR^d} \mathds{1}_{B}(x)\mathds{1}_{(0,z]}(h)\exp\left(-\kappa_d \int_0^{\Vert y \Vert^2 + h} \left(\Vert y \Vert^2 + h -t \right)^{\frac{d}{2}}\mathrm{d}F(t) \right)\mathrm{d}x\mathrm{d}F(h) \nonumber \\
    &=  \nu_d(B)\int_0^z \exp\left(-\kappa_d \int_0^{\Vert y \Vert^2 + h} \left(\Vert y \Vert^2 + h -t \right)^{\frac{d}{2}}\mathrm{d}F(t) \right)\mathrm{d}F(h).\nonumber
\end{align}
In (\ref{equation_mecke_application}) we used the fact that $(x,h) \notin A_{x,h,y}$ such that $\eta\left(A_{x,h,y}\right) = (\eta + \delta_{(x,h)})\left(A_{x,h,y}\right)$.
\end{proof}

Recall that $\mathcal{F}_{+}$ denotes the space of all (not necessarily bounded) distribution functions on $(0,\infty)$. Given the statement of Lemma \ref{dependent_thinning_measure} we focus on the case $y=0$ and define for $F \in \mathcal{F}_{+}$ the function $G_F:[0,\infty)\to[0,\infty)$ via:
\begin{equation}
    G_F(z):= \int_0^z \exp\left(-\kappa_d \int_0^{h} \left(h -t \right)^{\frac{d}{2}}\mathrm{d}F(t) \right)\mathrm{d}F(h). \label{G_operator_def}
\end{equation}
For functions $G_F$ with $F \in \mathcal{F}_{+}$ as in (\ref{G_operator_def}) we obtain the following important identifiability result:

\begin{theorem}\label{thm_G_identifiable}
    Let $F_1, F_2 \in \mathcal{F}_{+}$, $R > 0$. If $G_{F_1}(z) = G_{F_2}(z)$ for all $z \in [0,R)$ then $F_1(z) = F_2(z)$ for all $z \in [0,R)$. In particular, if $G_{F_1} = G_{F_2}$ then $F_1 = F_2$.
\end{theorem}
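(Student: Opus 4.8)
The plan is to show that the map $F \mapsto G_F$ is injective on $\mathcal F_+$ by a bootstrapping/continuation argument that recovers $F$ on larger and larger intervals, starting from $z$ near $0$. First I would observe that $G_F$ is, by construction, itself a (possibly unbounded) distribution function on $(0,\infty)$: it is nondecreasing, right-continuous, and $G_F(0)=0$, since its defining integral is $\int_0^z g(h)\,\mathrm dF(h)$ with $g(h)=\exp(-\kappa_d\int_0^h(h-t)^{d/2}\,\mathrm dF(t))\in(0,1]$. In particular $\mathrm dG_F(h) = g(h)\,\mathrm dF(h)$, and because $g$ is strictly positive, $F$ and $G_F$ are mutually absolutely continuous and share the same atoms and the same support. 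This already shows the two measures have the same "skeleton"; the work is to pin down the masses.

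The key step is a continuation argument on the interval $[0,R)$. Fix the smallest point $h_0 = \inf\{h : F(h) > 0\}$ (which equals $\inf\{h : G_F(h)>0\}$ and is therefore determined by $G_F$). On $[0,h_0]$ both $F_i$ vanish, so they agree there, and moreover the inner integral $\int_0^h (h-t)^{d/2}\,\mathrm dF_i(t)$ depends only on the restriction $F_i|_{[0,h)}$. Now suppose, toward a "largest interval of agreement" argument, that $F_1 = F_2$ on $[0,a)$ for some $a$ with $h_0 \le a < R$; I claim they agree on a neighborhood of $a$ as well, which by a standard supremum argument forces agreement on all of $[0,R)$. Since the inner integrand at argument $h$ only involves $F_i$ on $[0,h) \subseteq [0,a)$ when $h \le a$... more carefully: for $h$ slightly larger than $a$, $\int_0^h (h-t)^{d/2}\,\mathrm dF_i(t) = \int_{[0,a]}(h-t)^{d/2}\,\mathrm dF_i(t) + \int_{(a,h]}(h-t)^{d/2}\,\mathrm dF_i(t)$, and the first summand is known (equal for $i=1,2$) while the second is $O((h-a)^{d/2}\,(F_i(h)-F_i(a)))$, which is small. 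So writing $g_i(h) = \exp(-\kappa_d\int_0^h(h-t)^{d/2}\,\mathrm dF_i(t))$, we have $g_1(h) = g_2(h) \cdot (1 + \text{small})$ for $h$ near $a$, and more precisely $g_i(h) = g_1(a) + o(1)$ uniformly. Then from $G_{F_1}(h) - G_{F_1}(a) = G_{F_2}(h) - G_{F_2}(a)$ we get
\[
\int_{(a,h]} g_1(u)\,\mathrm dF_1(u) \;=\; \int_{(a,h]} g_2(u)\,\mathrm dF_2(u),
\]
and since both $g_i$ are bounded between $g_1(a) - \varepsilon$ and $1$ on $(a, a+\delta]$ (using that $g_i$ is continuous from the right and $g_1(a)>0$), this sandwiches $F_1(h) - F_1(a)$ and $F_2(h) - F_2(a)$ between comparable quantities. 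To actually conclude equality and not merely comparability, I would set up an integral (Gronwall-type) inequality: let $\Delta(h) = |F_1(h) - F_2(h)|$ for $h \in [a, a+\delta]$; subtracting the two identities and estimating $|g_1(u) - g_2(u)| \le C\,\sup_{[a,u]}\Delta$ (Lipschitz dependence of the exponential on its argument, times the Lipschitz dependence of $\int_0^u(u-t)^{d/2}\,\mathrm dF_i(t)$ on $F_i$ via integration by parts, $\int_0^u(u-t)^{d/2}\,\mathrm dF_i(t) = \tfrac d2\int_0^u(u-t)^{d/2-1}F_i(t)\,\mathrm dt$) yields $\Delta(h) \le C'\int_a^h \Delta(u)\,\mathrm du$ for $h \in [a,a+\delta]$, whence $\Delta \equiv 0$ there by Gronwall. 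This extends the agreement interval past $a$, completing the continuation.

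The main obstacle I anticipate is handling atoms of $F$ cleanly: when $F_i$ has a jump at a point $c$, the functions $G_{F_i}$ and $g_i$ are only right-continuous at $c$, and the "small perturbation" estimate near $c$ must be done carefully so that the jump sizes of $G_{F_1}$ and $G_{F_2}$ at $c$ — which are $g_1(c^-)\,(F_1(c)-F_1(c^-))$ and $g_2(c^-)\,(F_2(c)-F_2(c^-))$ with $g_i(c^-)$ depending only on $F_i|_{[0,c)}$, already known equal — correctly force the jumps of $F_i$ to be equal before continuing. Equivalently, one should argue that at each potential atom the already-established agreement on $[0,c)$ makes $g_1(c^-) = g_2(c^-)$, and then matching the atom of $G$ at $c$ gives the atom of $F$ at $c$; on the continuous part one runs the Gronwall estimate above. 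The statement "$G_{F_1} = G_{F_2} \Rightarrow F_1 = F_2$" then follows by taking $R \to \infty$. I would take care to note that all integrals are finite on bounded intervals because $\mathbb F$ is locally finite, so no integrability issues arise within $[0,R)$.
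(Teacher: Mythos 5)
Your argument is correct and takes a genuinely different route from the paper. The paper's proof is a single \emph{global} Gr\"onwall estimate: it writes $F_i(z)=\int_0^z \exp\bigl(\kappa_d\int_0^h(h-t)^{d/2}\,\mathrm dF_i(t)\bigr)\,\mathrm dG_{F_i}(h)$, splits $|F_1(z)-F_2(z)|$ into a term with $\mathrm dG_{F_1}$ against a difference of exponentials (handled via $|e^x-e^y|\le\max\{e^x,e^y\}|x-y|$ and integration by parts) and a term with $\mathrm d(G_{F_1}-G_{F_2})$ (handled via integration by parts alone), then applies Gr\"onwall once over all of $[0,z]$ to obtain the quantitative stability bound $|F_1(z)-F_2(z)|\le K(z)\sup_{h\in[0,z]}|G_{F_1}(h)-G_{F_2}(h)|$, from which the identifiability claim (including the localized version on $[0,R)$) follows at once. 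You instead run a maximal-interval-of-agreement continuation: starting from the same observation of mutual absolute continuity, you write $F_i(h)-F_i(a)=\int_{(a,h]}g_i^{-1}\,\mathrm dG$, close the frontier at its supremum $a^*$ by matching the atom $\Delta G(a^*)=g_i(a^*)\Delta F_i(a^*)$ (using that $g_i(a^*)$ depends only on $F_i|_{[0,a^*)}$, already known equal), and then use a \emph{local} Gr\"onwall inequality $\Delta(h)\le C\int_{a^*}^h\Delta$ on $(a^*,a^*+\delta]$ to push the agreement past $a^*$. The ingredients --- mutual absolute continuity, $|e^x-e^y|$ estimates, rewriting $\int_0^h(h-t)^{d/2}\,\mathrm dF$ by parts as $\tfrac d2\int_0^h F(t)(h-t)^{d/2-1}\,\mathrm dt$, Gr\"onwall --- are the same; the structural difference is global stability versus local continuation. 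The paper's route buys the explicit Lipschitz estimate and avoids any case analysis at atoms; your route is more elementary in spirit but requires the two extra checks you correctly flag: (i) matching jumps at the frontier, and (ii) ensuring the local extension yields a genuine $\delta>0$, which holds because $g_i^{-1}$ is bounded on $[0,a^*+\delta]$ (since $F_i(a^*+\delta)<\infty$) and $G$ is locally finite. With those filled in, and the formal supremum argument made explicit, the proposal is a valid alternative proof.
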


The key ingredient for the proof of this theorem is a variant of the Gr\"onwall inequality. This inequality is in particular known for its applications in integral- and differential equations. We refer to \cite{Pachpatte1998} for more variants of this inequality and their applications.

\begin{theorem}[Theorem 1.3.3. in \cite{Pachpatte1998}]\label{gronwall_thm}
    Suppose $u$, $\alpha$ and $\beta$ are measurable non-negative functions on $[0,\infty)$. Assume that $\alpha$ is non-decreasing. Assume for all $z \geq 0$: $u, \alpha, \beta \in L^1([0,z])$. If for all $z\geq 0$ the following holds:
    \[u(z) \leq \alpha(z) + \beta(z)\int_0^z u(s)\mathrm{d}s.\]
    Then, for all $z\geq 0$:
    \[u(z) \leq \alpha(z)\left(1 + \beta(z)\int_0^z\exp\left(\int_s^z \beta(r)\mathrm{d}r\right)\mathrm{d}s\right).\]
\end{theorem}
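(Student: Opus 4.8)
The plan is to prove the inequality by the classical integrating-factor argument, carried out separately on each interval $[0,z_0]$ and then invoking the arbitrariness of $z_0$. Fix $z_0 \ge 0$. Because $\alpha$ is non-decreasing, for every $z \in [0,z_0]$ the hypothesis gives the weaker estimate
\[
u(z) \le \alpha(z_0) + \beta(z)\int_0^z u(s)\,\mathrm{d}s .
\]
Set $v(z) := \int_0^z u(s)\,\mathrm{d}s$. Since $u \in L^1([0,z_0])$, $v$ is finite and absolutely continuous on $[0,z_0]$, with $v(0)=0$ and $v' = u$ a.e.; hence $v'(z) \le \alpha(z_0) + \beta(z)v(z)$ for a.e.\ $z \in [0,z_0]$.

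The next step is to multiply by the integrating factor $E(z) := \exp\!\big(-\int_0^z \beta(r)\,\mathrm{d}r\big)$. As $\beta \in L^1([0,z_0])$, the map $z \mapsto \int_0^z \beta$ is absolutely continuous, and composing it with the locally Lipschitz function $x \mapsto e^{-x}$ shows $E$ is absolutely continuous on $[0,z_0]$, with $E>0$, $E(0)=1$ and $E' = -\beta E$ a.e. Therefore $Ev$, a product of absolutely continuous functions on a compact interval, is again absolutely continuous, and
\[
(Ev)'(z) = E(z)v'(z) - \beta(z)E(z)v(z) \le \alpha(z_0)\,E(z) \qquad \text{for a.e. } z \in [0,z_0].
\]
Integrating over $[0,z_0]$ via the fundamental theorem of calculus for absolutely continuous functions, and using $(Ev)(0)=0$, yields $E(z_0)v(z_0) \le \alpha(z_0)\int_0^{z_0} E(s)\,\mathrm{d}s$; dividing by $E(z_0)>0$ and noting $E(s)/E(z_0) = \exp\!\big(\int_s^{z_0}\beta(r)\,\mathrm{d}r\big)$ gives
\[
v(z_0) \le \alpha(z_0)\int_0^{z_0} \exp\!\Big(\int_s^{z_0}\beta(r)\,\mathrm{d}r\Big)\,\mathrm{d}s .
\]

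Finally I would feed this bound back into the \emph{original} hypothesis evaluated at $z=z_0$ (now with the true value $\alpha(z_0)$, not the monotone over-estimate used above):
\[
u(z_0) \le \alpha(z_0) + \beta(z_0)v(z_0) \le \alpha(z_0)\Big(1 + \beta(z_0)\int_0^{z_0}\exp\!\big(\textstyle\int_s^{z_0}\beta(r)\,\mathrm{d}r\big)\,\mathrm{d}s\Big),
\]
which is exactly the asserted inequality; since $z_0\ge 0$ was arbitrary, the proof is done. The only point requiring care is the regularity bookkeeping — verifying that $E$ and $Ev$ are genuinely absolutely continuous (rather than merely differentiable a.e.) so that integrating the differential inequality $(Ev)' \le \alpha(z_0)E$ is legitimate. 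This is where local integrability of $\beta$ is essential, and it is the reason for organizing the argument interval by interval rather than globally. Beyond that, no continuity of $\beta$ is needed, and the monotonicity of $\alpha$ enters solely to extract $\alpha(z_0)$ as a constant forcing term on $[0,z_0]$.
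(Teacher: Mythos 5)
Your proof is correct. Note that the paper does not prove this statement at all: it is imported verbatim from Pachpatte's book (Theorem 1.3.3), with only the remark that the continuity hypothesis stated there can be dropped. Your argument is the standard integrating-factor Gr\"onwall proof, and its value here is precisely that it is carried out in the measurable/absolutely-continuous setting: using $\alpha(z_0)$ as a constant majorant via monotonicity, noting $v(z)=\int_0^z u$ and $E(z)=\exp\bigl(-\int_0^z\beta\bigr)$ are absolutely continuous on $[0,z_0]$ (the latter because $\beta\in L^1([0,z_0])$ and $e^{-x}$ is Lipschitz on the relevant range), applying the a.e.\ product rule and the fundamental theorem of calculus for AC functions, and then feeding the bound on $v(z_0)$ back into the original hypothesis with the true $\alpha(z_0)$. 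This substantiates exactly the claim the paper leaves to a page reference, so no gap remains.
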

Note that if $u,\alpha$ and $\beta$ satisfy the conditions in Theorem \ref{gronwall_thm} and $\beta$ is non-decreasing, then:
\begin{equation}
    u(z) \leq \alpha(z)\left(1+\beta(z)z\exp\left(\beta(z)z\right) \right).\label{gronwall_thm_note}
\end{equation}
We need to point out that in \cite{Pachpatte1998} this theorem also includes the assumption that $u, \alpha$ and $\beta$ are continuous. However, as noted on p. 14 in the same reference, this assumption is not needed. 

\begin{proof}[Proof of Theorem \ref{thm_G_identifiable}]
    Let $z \geq 0$. For $i \in \{1,2\}$ note that the (Lebesgue-Stieltjes) measures associated with $G_{F_i}$ and $F_i$ are mutually absolutely continuous. The corresponding Radon-Nikodym derivative is given by:
    \[\frac{\mathrm{d}G_{F_i}}{\mathrm{d}F_i}(z) = \exp\left(-\kappa_d \int_0^{z} \left(z -t \right)^{\frac{d}{2}}\mathrm{d}F_i(t) \right).\]
    Hence, we may also write:
    \[F_i(z) = \int_0^z \frac{\mathrm{d}F_i}{\mathrm{d}G_{F_i}}(h)\mathrm{d}G_{F_i}(h)  =\int_0^z \exp\left(\kappa_d \int_0^{h} \left(h -t \right)^{\frac{d}{2}}\mathrm{d}F_i(t) \right)\mathrm{d}G_{F_i}(h).\]
    Via integration by parts we may write:
    \[\int_0^z (z-t)^{\frac{d}{2}}\mathrm{d}F_i(t) = 0\cdot F_i(z) - z^{\frac{d}{2}}F_i(0) - \int_0^z F_i(t)\mathrm{d}\left((z-t)^{\frac{d}{2}} \right)(t) = \frac{d}{2}\int_0^z F_i(t)(z-t)^{\frac{d}{2}-1}\mathrm{d}t.\]
    Moreover, 'the expression above' is a non-decreasing function of $z$. We now derive a general upper bound for $|F_1(z)-F_2(z)|$:
    \begin{align}
        |&F_1(z)-F_2(z)| = \nonumber\\ 
        &=\left|\int_0^z \exp\left(\kappa_d \int_0^{h} \left(h -t \right)^{\frac{d}{2}}\mathrm{d}F_1(t) \right)\mathrm{d}G_{F_1}(h) - \int_0^z \exp\left(\kappa_d \int_0^{h} \left(h -t \right)^{\frac{d}{2}}\mathrm{d}F_2(t) \right)\mathrm{d}G_{F_2}(h)\right| \nonumber \\
        \begin{split}\label{two_terms_identifiability_G_proof}
            &\leq \left|\int_0^z \exp\left(\kappa_d \int_0^{h} \left(h -t \right)^{\frac{d}{2}}\mathrm{d}F_1(t) \right)\mathrm{d}G_{F_1}(h) - \int_0^z \exp\left(\kappa_d \int_0^{h} \left(h -t \right)^{\frac{d}{2}}\mathrm{d}F_2(t) \right)\mathrm{d}G_{F_1}(h)\right| + \\
        &\phantom{=} + \left|\int_0^z \exp\left(\kappa_d \int_0^{h} \left(h -t \right)^{\frac{d}{2}}\mathrm{d}F_2(t) \right)\mathrm{d}(G_{F_1}-G_{F_2})(h)\right|.
        \end{split}
    \end{align}
    Let us now consider the first term of (\ref{two_terms_identifiability_G_proof}). For $h \geq 0$ define:
    \[C(h) := \max\left\{\exp\left(\kappa_d \int_0^{h} \left(h -t \right)^{\frac{d}{2}}\mathrm{d}F_1(t)\right),\exp\left(\kappa_d \int_0^{h} \left(h -t \right)^{\frac{d}{2}}\mathrm{d}F_2(t)\right) \right\}.\]
    Note that $C$ is increasing. Since $|e^x - e^y| \leq \max\{e^x,e^y\}|x-y|$ for $x,y\geq 0$ the first term in (\ref{two_terms_identifiability_G_proof}) is bounded by:
    \begin{align*}
        &\text{\quad \ } \int_0^z\left| \exp\left(\kappa_d \int_0^{h} \left(h -t \right)^{\frac{d}{2}}\mathrm{d}F_1(t) \right) - \exp\left(\kappa_d \int_0^{h} \left(h -t \right)^{\frac{d}{2}}\mathrm{d}F_2(t) \right)\right|\mathrm{d}G_{F_1}(h) \\
        &\leq \int_0^z C(h)\kappa_d \left|\int_0^{h} \left(h -t \right)^{\frac{d}{2}}\mathrm{d}F_1(t) - \int_0^{h} \left(h -t \right)^{\frac{d}{2}}\mathrm{d}F_2(t)  \right|\mathrm{d}G_{F_1}(h) \\
        &= \int_0^z C(h)\kappa_d \left|\frac{d}{2}\int_0^h \left(F_1(t) - F_2(t)\right)(h-t)^{\frac{d}{2}-1}\mathrm{d}t \right|\mathrm{d}G_{F_1}(h) \\
        &\leq \frac{d\kappa_d}{2}C(z) \int_0^z \int_0^h \left|F_1(t) - F_2(t)\right|(h-t)^{\frac{d}{2}-1}\mathrm{d}t \mathrm{d}G_{F_1}(h) \\
        &\leq  \frac{d\kappa_d}{2}C(z)z^{\frac{d}{2}-1} \int_0^z \int_0^z \left|F_1(t) - F_2(t)\right|\mathrm{d}t \mathrm{d}G_{F_1}(h)\\
        &= \frac{d\kappa_d}{2}C(z)z^{\frac{d}{2}-1}G_{F_1}(z)\int_0^z \left|F_1(t) - F_2(t)\right|\mathrm{d}t.
    \end{align*}
Via integration by parts, the second term of (\ref{two_terms_identifiability_G_proof}) is bounded by:
    \begin{align*}
         \begin{split}
             &\phantom{\leq} \left|\exp\left(\kappa_d \int_0^{z} \left(z -t \right)^{\frac{d}{2}}\mathrm{d}F_2(t) \right)\left(G_{F_1}(z) - G_{F_2}(z)\right)\right| + \\ 
             &\phantom{=} + \left|\int_0^z \left(G_{F_1}(h) - G_{F_2}(h)\right)\mathrm{d}\left(\exp\left(\kappa_d \int_0^{h} \left(h -t \right)^{\frac{d}{2}}\mathrm{d}F_2(t) \right) \right)(h) \right|
         \end{split}\\
         \begin{split}
             &\leq \left|G_{F_1}(z) - G_{F_2}(z)\right|\exp\left(\kappa_d \int_0^{z} \left(z -t \right)^{\frac{d}{2}}\mathrm{d}F_2(t) \right) + \\ 
             &\phantom{=} + \sup_{h \in [0,z]}\left|G_{F_1}(h) - G_{F_2}(h)\right| \int_0^z \mathrm{d}\left(\exp\left(\kappa_d \int_0^{h} \left(h -t \right)^{\frac{d}{2}}\mathrm{d}F_2(t) \right) \right)(h)
         \end{split}\\
         &\leq \sup_{h \in [0,z]}\left|G_{F_1}(h) - G_{F_2}(h)\right| 2 \exp\left(\kappa_d \int_0^{z} \left(z -t \right)^{\frac{d}{2}}\mathrm{d}F_2(t) \right).
    \end{align*}
    Combining all results, we obtain:
    \begin{align*}
        \begin{split}
            |F_1(z) - F_2(z)| &\leq \frac{d\kappa_d}{2}C(z)z^{\frac{d}{2}-1}G_{F_1}(z)\int_0^z \left|F_1(t) - F_2(t)\right|\mathrm{d}t + \\
            &\phantom{=} + \sup_{h \in [0,z]}\left|G_{F_1}(h) - G_{F_2}(h)\right|2 \exp\left(\kappa_d \int_0^{z} \left(z -t \right)^{\frac{d}{2}}\mathrm{d}F_2(t) \right).
        \end{split} 
    \end{align*}
    Applying Theorem \ref{gronwall_thm} and (\ref{gronwall_thm_note}) with $u(z) = |F_1(z) - F_2(z)|$ yields:
    \begin{equation}
        |F_1(z) - F_2(z)| \leq 
        K(z)\sup_{h \in [0,z]}\left|G_{F_1}(h) - G_{F_2}(h)\right| . \label{G_identfiable_proof_final_eq}
    \end{equation}
    Here, $K(z)$ is given by:
    \[K(z) := \left(1+ \frac{d\kappa_d}{2}C(z)z^{\frac{d}{2}}G_{F_1}(z) \exp \left(\frac{d\kappa_d}{2}C(z)z^{\frac{d}{2}}G_{F_1}(z)\right)\right)2 \exp\left(\kappa_d \int_0^{z} \left(z -t \right)^{\frac{d}{2}}\mathrm{d}F_2(t) \right).\]
    The statement of the theorem immediately follows from (\ref{G_identfiable_proof_final_eq}).
\end{proof}

Suppose we wish to estimate $G_F$, and we observe the extreme points of $\eta$ within the bounded observation window $W_n$, as well as their Laguerre cells. We define the following unbiased estimator for $G_F$:
\begin{align}
    \hat{G}_n(z) :&= \frac{1}{\nu_d(W_n)}\sum_{(x,h) \in \eta} \mathds{1}_{W_n}(x)\mathds{1}_{(0,z]}(h) \mathds{1}\{x \in C((x,h),\eta) \} \nonumber\\
    &= \frac{1}{\nu_d(W_n)}\sum_{(x,h) \in \eta^0} \mathds{1}_{W_n}(x)\mathds{1}_{(0,z]}(h).\label{G_estimator_def}
\end{align}
Hence, $G_F $ is a function which we can estimate and which uniquely determines $F$, this motivates the following definition:

\begin{definition}[First inverse estimator of $F$]\label{inverse_estimator_F0_def}
    Define $\hat{F}_n^0$ to be the unique function $\hat{F}_n^0 \in \mathcal{F}_{+}$ which satisfies: $G_{\smash{\hat{F}_n^0}}(z) = \hat{G}_n(z)$ for all $z \geq 0$, with $\hat{G}_n$ as in (\ref{G_estimator_def}).
\end{definition}

Let us now discuss why $\hat{F}_n^0$ is well-defined. Clearly, if there exists a function $\hat{F}_n^0 \in \mathcal{F}_{+}$ which satisfies $G_{\smash{\hat{F}_n^0}}(z) = \hat{G}_n(z)$ for all $z \geq 0$ then it is unique by Theorem \ref{thm_G_identifiable}. Suppose $(x_1,h_1),(x_2,h_2),\dots,\allowbreak(x_k,h_k)$ is the sorted realization of the points of $\eta^0$ with $x_1,\dots,x_k \in W_n$ and $h_1 \leq h_2 \leq \dots \leq h_k$. We may write:
\[\hat{G}_n(z) = \frac{1}{\nu_d(W_n)}\sum_{i=1}^k \mathds{1}\{h_i \leq z\}.\]
Set $h_0 = 0$ such that $\hat{F}_n^0(h_0)= 0$. Clearly, $\hat{G}_n$ is piecewise constant, with jump locations at $h_1,\dots,h_k$. Recall from the proof of Theorem \ref{thm_G_identifiable} that the Lebesgue-Stieltjes measures associated with $\hat{F}_n^0$ and $G_{\smash{\hat{F}_n^0}}$ are mutually absolutely continuous. As a consequence, if $\hat{F}_n^0$ exists, it is necessarily also piecewise constant with the same jump locations as $G_{\smash{\hat{F}_n^0}} = \hat{G}_n$. Therefore, if we can uniquely specify the value of $\hat{F}_n^0$ at $h_1,\dots,h_k$, existence and uniqueness of $\hat{F}_n^0$ is established. Let $i \in \{1,\dots,k\}$ then, for the $\hat{F}_n^0$ we are looking for:
\begin{align*}
    \hat{G}_n(h_i) &= \hat{G}_n(h_{i-1}) + \int_{h_{i-1}}^{h_i} \exp\left(-\kappa_d \int_0^{h} \left(h -t \right)^{\frac{d}{2}}\mathrm{d}\hat{F}_n^0(t) \right)\mathrm{d}\hat{F}_n^0(h) \\
    &= \hat{G}_n(h_{i-1}) + \exp\left(-\kappa_d \sum_{j=1}^i \left(h_i -h_j \right)^{\frac{d}{2}}\left(\hat{F}_n^0(h_j) - \hat{F}_n^0(h_{j-1}) \right) \right)\left(\hat{F}_n^0(h_i) - \hat{F}_n^0(h_{i-1}) \right)\\
    &= \hat{G}_n(h_{i-1}) + \exp\left(-\kappa_d \sum_{j=1}^{i-1} \left(h_i -h_j \right)^{\frac{d}{2}}\left(\hat{F}_n^0(h_j) - \hat{F}_n^0(h_{j-1}) \right) \right)\left(\hat{F}_n^0(h_i) - \hat{F}_n^0(h_{i-1}) \right).
\end{align*}
Since $\hat{F}_n^0(h_0)= 0$, $\hat{F}_n^0$ is recursively defined via:
\begin{align}
    \hat{F}_n^0(h_i) &= \hat{F}_n^0(h_{i-1}) + \left(\hat{G}_n(h_i) - \hat{G}_n(h_{i-1}) \right)\exp\left(\kappa_d \sum_{j=1}^{i-1} \left(h_i -h_j \right)^{\frac{d}{2}}\left(\hat{F}_n^0(h_j) - \hat{F}_n^0(h_{j-1}) \right) \right). \label{computational_formula_F_estimator}
\end{align}
Note that the RHS of (\ref{computational_formula_F_estimator}) only depends on the values $\hat{F}_n^0(h_j)$ with $j < i$. So indeed, (\ref{computational_formula_F_estimator}) completely defines $\hat{F}_n^0$. Moreover, this expression is also a convenient formula for computing $\hat{F}_n^0$ in practice.

\subsection{Consistency}

In this section we show that $\hat{F}_n^0$, as in Definition \ref{inverse_estimator_F0_def}, is a strongly consistent estimator for $F$. The first step is to show that the estimator $\hat{G}_n$ as in (\ref{G_estimator_def}) for $G_F$ is strongly consistent. For empirical estimators such as $\hat{G}_n$, their consistency follows from a spatial ergodic theorem. From Proposition 13.4.I. in \cite{Daley2008}, and the ergodicity of the Poisson process under consideration, we obtain:

\begin{theorem}[Spatial ergodic theorem]
    Let $\eta$ be a Poisson process on $\mathbb{X} = \RR^d \times (0,\infty)$ with intensity measure $\nu_d \times \mathbb{F}$. Here, $\mathbb{F}$ is a locally finite measure concentrated on $(0, \infty)$. Let $g(\psi,h)$ be a measurable non-negative function on $\mathbf{N}(\mathbb{X}) \times (0, \infty)$. Then, for any convex averaging sequence $(W_n)_{n \geq 1}$:
    \[\lim_{n \to \infty} \frac{1}{\nu_d(W_n)}\sum_{(x,h) \in \eta}\mathds{1}_{W_n}(x)g(S_x \eta, h) \overset{\text{a.s.}}{=} \int_0^\infty \EE\left(g\left(\eta + \delta_{(0,h)},h \right) \right)\mathbb{F}(\mathrm{d}h).\]
\end{theorem}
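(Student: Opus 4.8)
The plan is to recognise the left‑hand side as the normalised mass of a stationary random measure on $\RR^d$, apply the general spatial ergodic theorem for stationary random measures, collapse the resulting conditional expectation using ergodicity of the Poisson process, and finally identify the limiting constant by a one‑line Mecke computation.

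First I would define the random measure $\xi$ on $\RR^d$ by
\[
\xi(A) := \sum_{(x,h)\in\eta}\mathds{1}_A(x)\,g(S_x\eta,h), \qquad A\in\mathcal{B}(\RR^d),
\]
so that the left‑hand side of the theorem is exactly $\xi(W_n)/\nu_d(W_n)$; joint measurability of $(x,\psi,h)\mapsto\mathds{1}_A(x)g(S_x\psi,h)$ makes $\xi$ a bona fide random measure. The key structural point to check is that $\xi$ is \emph{stationary}: with $\Psi_A(\psi):=\sum_{(x,h)\in\psi}\mathds{1}_A(x)g(S_x\psi,h)$ one has $\xi(A)=\Psi_A(\eta)$, and using the identity $S_x\eta=S_{x-v}(S_v\eta)$ together with the substitution $x\mapsto x-v$ one gets $\xi(A+v)=\Psi_A(S_v\eta)$; since $\eta\overset{d}{=}S_v\eta$ for every $v$ (the intensity $\nu_d\times\mathbb{F}$ is spatially translation invariant), it follows that $\big(\xi(A+v)\big)_{A}\overset{d}{=}\big(\xi(A)\big)_{A}$, i.e. $\xi$ is stationary.

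Next I would invoke Proposition 13.4.I in \cite{Daley2008}, the spatial ergodic theorem for stationary random measures, along the convex averaging sequence $(W_n)_{n\geq1}$: almost surely
\[
\frac{\xi(W_n)}{\nu_d(W_n)}\longrightarrow\EE\big(\xi([0,1]^d)\bigm|\mathcal{I}\big),
\]
where $\mathcal{I}$ is the $\sigma$‑algebra of events invariant under the spatial shift flow on $\mathbf{N}(\mathbb{X})$. Because $\eta$ is Poisson with a spatially invariant, diffuse intensity measure, this flow is mixing, hence ergodic, so $\mathcal{I}$ is $\PP$‑trivial and the conditional expectation reduces to the constant $\EE(\xi([0,1]^d))$. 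To evaluate it I would apply the Mecke equation with $f((x,h),\psi)=\mathds{1}_{[0,1]^d}(x)g(S_x\psi,h)$, giving
\[
\EE\big(\xi([0,1]^d)\big)=\int_0^\infty\int_{\RR^d}\mathds{1}_{[0,1]^d}(x)\,\EE\big(g(S_x(\eta+\delta_{(x,h)}),h)\big)\,\mathrm{d}x\,\mathbb{F}(\mathrm{d}h);
\]
since $S_x(\eta+\delta_{(x,h)})=S_x\eta+\delta_{(0,h)}$ and $S_x\eta\overset{d}{=}\eta$, the inner expectation equals $\EE(g(\eta+\delta_{(0,h)},h))$, independent of $x$, and integrating over the unit cube (volume one) leaves precisely $\int_0^\infty\EE(g(\eta+\delta_{(0,h)},h))\,\mathbb{F}(\mathrm{d}h)$.

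The hard part is not any single computation but making the ergodic‑theorem machinery legitimately applicable: checking stationarity of $\xi$ (done above) and, more delicately, its almost sure local finiteness, and citing/justifying ergodicity of $\eta$ for the $\RR^d$‑action on the spatial coordinate. For local finiteness I would first prove the statement for $g$ bounded and supported on marks in $(0,z]$, where $\xi(W_n)\leq(\sup g)\,\eta(W_n\times(0,z])$ is finite almost surely, and then pass to general non‑negative $g$ by monotone convergence; in the degenerate case where the right‑hand side is infinite, truncating $\xi$ by multiples of Lebesgue measure shows the left‑hand side also diverges to $+\infty$, so the identity still holds.
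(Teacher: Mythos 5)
Your proposal is correct and follows essentially the same route as the paper, which simply cites Proposition 13.4.I in Daley and Vere-Jones together with the ergodicity of the stationary Poisson process without spelling out the steps. You have correctly filled in what that citation leaves implicit: recasting the empirical average as a stationary random measure on $\RR^d$, invoking the ergodic theorem along the convex averaging sequence, trivializing the invariant $\sigma$-algebra by mixing, and identifying the limiting constant via the Mecke equation.
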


We do note that Proposition 13.4.I in \cite{Daley2008} is phrased in the context that $\mathbb{F}$ is a finite measure. However, like Theorem 12.2.IV in the same reference (another spatial ergodic theorem), which is stated under the assumption that $\mathbb{F}$ is locally finite, the result remains valid if $\mathbb{F}$ is locally finite. Besides the spatial ergodic theorem we also need the following useful lemma for estimators of monotone functions:
\begin{lemma}\label{monotone_estimator_lemma}
    Let $(F_n)_{n \geq 1}$ be a random sequence of monotone functions on $\RR$, and let $F$ be a deterministic monotone function on $\RR$. If for all $z \in \RR$: $\PP(\lim_{n \to \infty} F_n(z) = F(z)) = 1$, then: $\PP(\lim_{n \to \infty} F_n(z) = F(z), \ \forall z \in \RR) = 1$.
\end{lemma}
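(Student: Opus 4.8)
The statement is the classical observation that pointwise convergence of monotone functions to a (necessarily, here, not-a-priori-continuous) limit upgrades to convergence at \emph{every} point, provided one already has it on a dense set, together with a minor subtlety because the limit $F$ may have jumps. The natural plan is to work with a fixed countable dense set, typically $\mathbb{Q}$, and to build a single almost-sure event on which convergence holds simultaneously at all rationals; then use monotonicity to sandwich $F_n(z)$ for arbitrary real $z$ between values at nearby rationals, and finally handle the (at most countably many) discontinuity points of $F$ separately, again by a dense-set argument plus a countable intersection.

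\medskip
\noindent Concretely, I would proceed as follows. First, let $D = \mathbb{Q} \cup \mathrm{Disc}(F)$, where $\mathrm{Disc}(F)$ is the set of discontinuity points of the monotone function $F$; this set is countable. By hypothesis, for each fixed $z$ we have $\PP(\lim_n F_n(z) = F(z)) = 1$, so since $D$ is countable, the event $\Omega_0 := \{\lim_n F_n(q) = F(q) \text{ for all } q \in D\}$ has probability one (countable intersection of probability-one events). Now fix $\omega \in \Omega_0$ and fix an arbitrary $z \in \RR$. If $z \in D$ we are done, so suppose $z \notin D$, hence $F$ is continuous at $z$. Pick rationals $q_1 < z < q_2$. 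By monotonicity of each $F_n$ (say non-decreasing; the non-increasing case is symmetric), $F_n(q_1) \leq F_n(z) \leq F_n(q_2)$, so
\[
F(q_1) = \lim_n F_n(q_1) \leq \liminf_n F_n(z) \leq \limsup_n F_n(z) \leq \lim_n F_n(q_2) = F(q_2).
\]
Letting $q_1 \uparrow z$ and $q_2 \downarrow z$ along rationals and using continuity of $F$ at $z$, both outer terms tend to $F(z)$, so $\lim_n F_n(z) = F(z)$. This holds for every $z \in \RR$ on the fixed event $\Omega_0$, which is exactly the claim.

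\medskip
\noindent There is essentially no serious obstacle here; the only point requiring a little care is the possible non-continuity of $F$, which is why the discontinuity points are thrown into $D$ and dealt with by the hypothesis directly rather than by the sandwiching argument (the sandwich only pins down the value at points of continuity of the limit). One should also note that monotonicity of the $F_n$ is used only in the sandwich step and that no uniformity or boundedness of $F$ is needed, which matches the setting of $\mathcal{F}_+$ where $F$ need not be bounded. If one prefers, the same argument can be phrased via the standard fact that pointwise convergence of distribution functions on a dense set to a continuous-on-that-set limit implies convergence everywhere, but spelling it out as above keeps the proof self-contained.
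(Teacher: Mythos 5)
Your proposal is correct and takes essentially the same route as the paper: the same countable set $D = \mathbb{Q} \cup \mathrm{Disc}(F)$, the same countable intersection to get a single full-measure event, and the same monotone sandwich at continuity points (the paper phrases the last step with an explicit $\varepsilon = 1/m$ and chosen rationals $r_m, s_m$, while you pass to $\liminf/\limsup$ and let $q_1 \uparrow z$, $q_2 \downarrow z$, but these are the same argument). Your remark that one should cover both monotone orientations is a good minor addition, since the lemma is stated for arbitrary monotone $F_n$ even though only the non-decreasing case is used elsewhere in the paper.
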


The proof of Lemma \ref{monotone_estimator_lemma} is given in Appendix \ref{appendix_proofs}. We obtain the following result:

\begin{corollary}\label{corr_consistency_Gn}
    With probability one: $\lim_{n \to \infty} \hat{G}_n(z) = G_F(z)$ for all $z \geq 0$.
\end{corollary}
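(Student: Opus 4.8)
The plan is to apply the Spatial ergodic theorem to the specific function underlying the definition of $\hat{G}_n$ in (\ref{G_estimator_def}), and then invoke Lemma \ref{monotone_estimator_lemma} to upgrade pointwise a.s.\ convergence to a.s.\ uniform-in-$z$ convergence. First I would fix $z \geq 0$ and define $g(\psi, h) := \mathds{1}_{(0,z]}(h)\,\mathds{1}\{0 \in C((0,h), \psi)\}$, which is a non-negative measurable function on $\mathbf{N}(\mathbb{X}) \times (0,\infty)$. The key observation, which uses stationarity of the cell construction, is that $\mathds{1}\{x \in C((x,h),\eta)\} = \mathds{1}\{0 \in C((0,h), S_x\eta)\}$, so that
\begin{align*}
    \hat{G}_n(z) = \frac{1}{\nu_d(W_n)}\sum_{(x,h)\in\eta}\mathds{1}_{W_n}(x)\,g(S_x\eta, h).
\end{align*}
The Spatial ergodic theorem then gives, almost surely,
\begin{align*}
    \lim_{n\to\infty}\hat{G}_n(z) = \int_0^\infty \EE\left(g\left(\eta + \delta_{(0,h)}, h\right)\right)\mathbb{F}(\mathrm{d}h) = \int_0^z \PP\left(0 \in C\left((0,h), \eta + \delta_{(0,h)}\right)\right)\mathbb{F}(\mathrm{d}h).
\end{align*}

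Next I would identify the right-hand side with $G_F(z)$. The event $\{0 \in C((0,h), \eta + \delta_{(0,h)})\}$ is exactly the event $\{(\eta+\delta_{(0,h)})(A_{0,h,0}) = 0\}$ in the notation of the proof of Lemma \ref{dependent_thinning_measure} with $x = 0$, $y = 0$; and since $(0,h) \notin A_{0,h,0}$, this probability equals $\PP(\eta(A_{0,h,0}) = 0)$, which by the computation culminating in (\ref{empty_set_probability}) equals $\exp(-\kappa_d \int_0^h (h-t)^{d/2}\,\mathrm{d}F(t))$. Integrating against $\mathbb{F}(\mathrm{d}h) = \mathrm{d}F(h)$ over $(0,z]$ gives precisely $G_F(z)$ as defined in (\ref{G_operator_def}). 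Alternatively, one could simply note that $\EE(\hat{G}_n(z)) = \Lambda^0(W_n \times (0,z])/\nu_d(W_n) = G_F(z)$ by Lemma \ref{dependent_thinning_measure} (with $y = 0$), which pins down the limit by the same Mecke-type identity used there; either route works and both reduce to the same calculation already carried out.

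Finally, having established that for each fixed $z \geq 0$ we have $\lim_{n\to\infty}\hat{G}_n(z) = G_F(z)$ almost surely, I would apply Lemma \ref{monotone_estimator_lemma}: each $\hat{G}_n$ is non-decreasing (it is a normalized counting measure of points with $h \leq z$) and $G_F$ is non-decreasing and deterministic, so the pointwise a.s.\ statement upgrades to the statement that, with probability one, $\hat{G}_n(z) \to G_F(z)$ simultaneously for all $z \geq 0$. (Strictly, Lemma \ref{monotone_estimator_lemma} is stated for functions on $\RR$; one extends $\hat{G}_n$ and $G_F$ by $0$ on $(-\infty, 0)$ to fit the hypothesis.) I expect no serious obstacle here; the only point requiring a little care is the translation-covariance identity relating $\mathds{1}\{x \in C((x,h),\eta)\}$ to a function of $S_x\eta$ evaluated at $0$, which follows directly from the definition (\ref{eq_Laguerre_cell_def}) of a Laguerre cell together with the definition of the shift operator, and the verification that the resulting $g$ is measurable, which is routine.
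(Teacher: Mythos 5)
Your proof is correct and follows essentially the same route as the paper: apply the spatial ergodic theorem to $g(\psi,h)=\mathds{1}_{(0,z]}(h)\mathds{1}\{0\in C((0,h),\psi)\}$ (equivalently $\mathds{1}_{(0,z]}(h)\mathds{1}\{\psi(A_{0,h,0})=0\}$, the form the paper uses) after rewriting the summand via translation covariance, identify the limit with $G_F(z)$ via the computation from Lemma~\ref{dependent_thinning_measure}, and then invoke Lemma~\ref{monotone_estimator_lemma} to make the null set uniform in $z$. Your extra step of spelling out $\EE(g(\eta+\delta_{(0,h)},h))$ and the extension-by-zero remark for Lemma~\ref{monotone_estimator_lemma} are just slightly more explicit versions of what the paper leaves implicit.
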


\begin{proof}
    Let $z \geq 0$, by Lemma \ref{monotone_estimator_lemma} it is sufficient to show that $\lim_{n \to \infty} \hat{G}_n(z) = G_F(z)$ almost surely. Using the same notation as in the proof of Lemma \ref{dependent_thinning_measure}, note that $\eta(A_{x,h,0})=S_x \eta(A_{0,h,0})$ for all $(x,h) \in \eta$ almost surely. As a consequence, $\hat{G}_n(z)$ may be written as follows:
    \begin{align*}
        \hat{G}_n(z) = \frac{1}{\nu_d(W_n)}\sum_{(x,h) \in \eta} \mathds{1}_{W_n}(x)\mathds{1}_{(0,z]}(h) \mathds{1}\{S_x\eta(A_{0,h,0}) = 0\}.
    \end{align*}
    Following the computation in the proof of Lemma \ref{dependent_thinning_measure}, it is readily verfied that applying the spatial ergodic theorem with $g(\psi,h) = \mathds{1}_{(0,z]}(h)\mathds{1}\{\psi(A_{0,h,0}) = 0\}$ yields the result with the desired limit.
\end{proof}

Finally, we need the following continuity result:

\begin{lemma}\label{continuity_F_in_G}
    Let $(F_n)_{n \geq 1}$ be a sequence of functions in $\mathcal{F}_{+}$ and let $F \in \mathcal{F}_{+}$. Let $R > 0$. If\\ $\lim_{n \to \infty} F_n(z) = F(z)$ for all $z \in [0,R)$, then $\lim_{n \to \infty} G_{F_n}(z) = G_F(z)$ for all $z \in [0,R)$. In particular, if $\lim_{n \to \infty} F_n(z) = F(z)$ for all $z \geq 0$, then $\lim_{n \to \infty} G_{F_n}(z) = G_F(z)$ for all $z \geq 0$. 
\end{lemma}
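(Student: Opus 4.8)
The plan is to fix $z \in [0,R)$ and show directly that $G_{F_n}(z) \to G_F(z)$, using dominated convergence twice: once for the inner integral defining the exponent, and once for the outer integral in $h$. First I would recall the definition
\[
G_{F_n}(z) = \int_0^z \exp\!\left(-\kappa_d \int_0^{h}(h-t)^{d/2}\,\mathrm{d}F_n(t)\right)\mathrm{d}F_n(h),
\]
and, as in the proof of Theorem \ref{thm_G_identifiable}, rewrite the inner Stieltjes integral via integration by parts as $\int_0^h (h-t)^{d/2}\mathrm{d}F_n(t) = \tfrac{d}{2}\int_0^h F_n(t)(h-t)^{d/2-1}\mathrm{d}t$ (the boundary terms vanish since $(h-t)^{d/2}$ is $0$ at $t=h$ and $F_n(0)=0$). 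This is the crucial move: it converts the dependence on $F_n$ from a dependence on the \emph{measure} $\mathrm{d}F_n$ into a dependence on the \emph{function values} $F_n(t)$, which is exactly what the hypothesis controls. Since $F_n(t)\to F(t)$ pointwise on $[0,R)$ and $0 \le F_n(t)(h-t)^{d/2-1} \le F(R^-)\,(h-t)^{d/2-1}$ — wait, this domination needs care because $F_n$ need not be uniformly bounded by $F$; instead I would use the following standard fact about convergence of monotone functions: pointwise convergence $F_n \to F$ on a dense set, together with monotonicity, forces $\sup_n F_n(t) < \infty$ for each $t < R$ and in fact $\limsup_n F_n(t) \le F(t^+)$, and one gets a locally uniform bound $F_n(t) \le F((t')^-) + 1$ for $n$ large, any $t < t' < R$. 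Using such a bound, dominated convergence gives $\int_0^h F_n(t)(h-t)^{d/2-1}\mathrm{d}t \to \int_0^h F(t)(h-t)^{d/2-1}\mathrm{d}t$ for each fixed $h < R$, hence the exponent converges, hence the integrand $g_n(h) := \exp(-\kappa_d \tfrac{d}{2}\int_0^h F_n(t)(h-t)^{d/2-1}\mathrm{d}t)$ converges pointwise to $g(h) := \exp(-\kappa_d\tfrac{d}{2}\int_0^h F(t)(h-t)^{d/2-1}\mathrm{d}t)$ on $[0,z]$.

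For the outer integral I cannot simply pass to the limit because both the integrand $g_n$ and the integrating measure $\mathrm{d}F_n$ vary with $n$. Here I would integrate by parts once more in the outer integral to move everything onto function values: write $G_{F_n}(z) = \int_0^z g_n(h)\,\mathrm{d}F_n(h) = g_n(z)F_n(z) - \int_0^z F_n(h)\,\mathrm{d}g_n(h)$, noting $g_n$ is of bounded variation (it is monotone non-increasing, being $\exp$ of a non-increasing quantity, as observed in the identifiability proof that the exponent is non-decreasing in $h$). Then $\mathrm{d}g_n$ is a negative measure with total variation $g_n(0) - g_n(z) = 1 - g_n(z) \le 1$, so $|\int_0^z F_n(h)\mathrm{d}g_n(h)| \le \sup_{h\le z}F_n(h) \cdot 1$, which is uniformly bounded. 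To take the limit in $-\int_0^z F_n\,\mathrm{d}g_n$ I would use that $g_n \to g$ pointwise and monotonically, so by a standard lemma (e.g.\ Helly-type, or just that monotone pointwise convergence of BV functions with bounded variation implies weak convergence of the associated measures against continuous integrands, plus handling that $F_n$ also converges) the product converges; alternatively, and more cleanly, I would \emph{symmetrize}: write $G_{F_n}(z) - G_F(z)$ as a telescoping sum of (i) a term with $g_n$ replaced by $g$ against $\mathrm{d}F_n$, controlled by $\sup_{h \le z}|g_n(h) - g(h)| \cdot F_n(z)$ which $\to 0$ once one upgrades pointwise convergence $g_n \to g$ to uniform convergence on $[0,z]$ (automatic since $g_n$ are monotone and $g$ is continuous — Dini/Pólya), and (ii) a term $\int_0^z g(h)\mathrm{d}F_n(h) - \int_0^z g(h)\mathrm{d}F_h(h)$... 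I mean $\int_0^z g\,\mathrm{d}(F_n - F)$, which by integration by parts equals $g(z)(F_n(z)-F(z)) - \int_0^z (F_n(h)-F(h))\,\mathrm{d}g(h) \to 0$ by pointwise convergence of $F_n$ at $z$ and at continuity points of $g$ (which is all points, since $g$ is continuous) combined with dominated convergence against the finite measure $|\mathrm{d}g|$.

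The last sentence of the lemma ("in particular, if $F_n \to F$ everywhere") is immediate by taking $R \to \infty$ in the first part, since each fixed $z \ge 0$ lies in $[0,R)$ for $R$ large enough.

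The main obstacle I anticipate is \textbf{not} any single estimate but the bookkeeping around the fact that $F_n$ need not be dominated by $F$ (the $F_n$ are arbitrary elements of $\mathcal{F}_+$, only asymptotically close to $F$): establishing the local uniform boundedness $\sup_n F_n(t) < \infty$ for $t < R$ from pointwise convergence plus monotonicity, and then being careful that all dominating functions used in the two applications of dominated convergence are genuinely integrable (the factor $(h-t)^{d/2-1}$ is integrable near $t = h$ since $d \ge 2$, so this is fine, but it should be stated). Once that uniform bound is in hand, the two integration-by-parts maneuvers reduce everything to pointwise convergence of uniformly bounded quantities against fixed finite measures, and the result follows; the upgrade of $g_n \to g$ from pointwise to uniform on the compact interval $[0,z]$ via monotonicity of $g_n$ and continuity of $g$ is the one place where the monotone structure is genuinely used rather than just convenient.
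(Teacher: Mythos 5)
Your proposal is correct and takes essentially the same route as the paper's proof: the same triangle-inequality telescoping into $\int_0^z (g_n - g)\,\mathrm{d}F_n$ and $\int_0^z g\,\mathrm{d}(F_n - F)$, the same Pólya-type upgrade of pointwise to uniform convergence of $g_n\to g$ on $[0,z]$ using monotonicity in $h$ and continuity of $g$, and the same control of the second term by continuity and boundedness of $g$ together with pointwise convergence of $F_n$. The only cosmetic difference is that where the paper appeals directly to Helly--Bray (continuous bounded integrand against weakly converging Stieltjes measures), you unwind this via integration by parts plus dominated convergence; the uniform local bound $\sup_n F_n(t) \le \sup_n F_n(z) < \infty$ that you worry about is indeed needed but follows immediately from $F_n(z) \to F(z)$.
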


The proof of Lemma \ref{continuity_F_in_G} is given in Appendix \ref{appendix_proofs}. Combining the previous results with Theorem \ref{thm_G_identifiable} we prove the following consistency result.

\begin{figure}[b!]
    \centering
    \makebox[\textwidth]{\makebox[\textwidth]{
    \begin{subfigure}[t]{0.5\textwidth}
        \centering
        \includegraphics{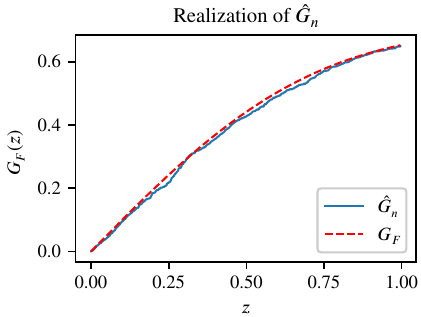}
    \end{subfigure}
    \begin{subfigure}[t]{0.5\textwidth}
        \centering
        \includegraphics{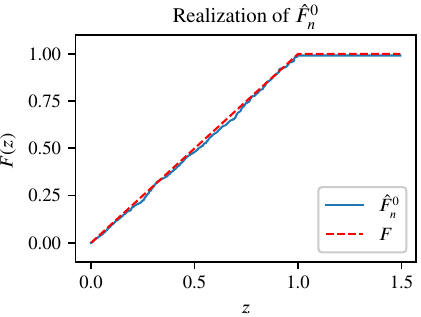}
    \end{subfigure}\hfill}}
    \caption{Left: A realization of $\hat{G}_n$. Right: The corresponding realization of $\hat{F}_n^0$. The actual underlying $F$ is equal to the CDF of a uniform distribution on $(0, 1)$.}
    \label{figure_example_simulation_Fn0}
\end{figure}

\begin{theorem}[Consistency of $\hat{F}_n^0$]
    With probability one, $\lim_{n \to \infty}\hat{F}_n^0(z) = F(z)$ for all $z \geq 0$.
\end{theorem}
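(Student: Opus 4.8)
The plan is to chain together the three ingredients that precede the statement: strong consistency of $\hat{G}_n$ for $G_F$ (Corollary \ref{corr_consistency_Gn}), the quantitative stability bound (\ref{G_identfiable_proof_final_eq}) from the proof of Theorem \ref{thm_G_identifiable}, and the monotone-limit upgrade (Lemma \ref{monotone_estimator_lemma}). First I would fix $z \geq 0$ and work on the almost-sure event on which $\hat{G}_n(h) \to G_F(h)$ for every $h \geq 0$; on this event I also want $\sup_{h \in [0,z]}|\hat{G}_n(h) - G_F(h)| \to 0$, which follows because both $\hat{G}_n$ and $G_F$ are non-decreasing and $G_F$ is continuous on $[0,z]$ (indeed $G_F(z) = \int_0^z \exp(\cdots)\,\mathrm dF(h) \leq F(z) < \infty$, and a pointwise limit of monotone functions to a continuous monotone limit is automatically uniform on a compact interval — this is the classical Pólya/Dini-type argument, which is exactly the mechanism behind Lemma \ref{monotone_estimator_lemma}).

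Next I would apply the stability inequality (\ref{G_identfiable_proof_final_eq}) with $F_1 = \hat{F}_n^0$ and $F_2 = F$. By Definition \ref{inverse_estimator_F0_def} we have $G_{\hat{F}_n^0} = \hat{G}_n$, so (\ref{G_identfiable_proof_final_eq}) gives
\[
|\hat{F}_n^0(z) - F(z)| \leq K_n(z)\,\sup_{h \in [0,z]}|\hat{G}_n(h) - G_F(h)|,
\]
where $K_n(z)$ is the constant $K(z)$ from that proof, now depending on $n$ through its dependence on $\hat{F}_n^0$ (via $C(z)$ and $G_{\hat{F}_n^0}(z) = \hat{G}_n(z)$). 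The right-hand supremum tends to $0$, so the only thing to check is that $K_n(z)$ does not blow up. Since $\hat{G}_n(z) \to G_F(z) < \infty$, the factor $G_{\hat{F}_n^0}(z) = \hat{G}_n(z)$ is eventually bounded. The delicate factor is $C(z) = \max\{\exp(\kappa_d\int_0^z (z-t)^{d/2}\mathrm dF_1(t)), \exp(\kappa_d\int_0^z (z-t)^{d/2}\mathrm dF_2(t))\}$ with $F_1 = \hat{F}_n^0$: I need an a priori bound on $\int_0^z(z-t)^{d/2}\,\mathrm d\hat{F}_n^0(t)$ that is uniform in $n$ for large $n$. This is where I would use the Radon–Nikodym relation from the proof of Theorem \ref{thm_G_identifiable}: $\mathrm d F = \exp(\kappa_d\int_0^h(h-t)^{d/2}\mathrm dF(t))\,\mathrm dG_F(h)$, applied to $\hat{F}_n^0$, combined with the fact that $\hat{G}_n(z) \to G_F(z)$ — one can bootstrap a bound on $\hat{F}_n^0$ on $[0,z]$ from a bound on $\hat{G}_n$ on $[0,z]$ together with the same Grönwall step, giving $\hat{F}_n^0(z) \leq K(z)\hat{G}_n(z)$ uniformly, hence $C(z)$ is eventually bounded. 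Therefore $K_n(z)$ is eventually bounded by a deterministic constant, and $|\hat{F}_n^0(z) - F(z)| \to 0$ almost surely for each fixed $z$.

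Finally, since $\hat{F}_n^0 \in \mathcal{F}_+$ is monotone (non-decreasing) for every $n$ and $F$ is monotone, Lemma \ref{monotone_estimator_lemma} upgrades the pointwise a.s. convergence to: with probability one, $\hat{F}_n^0(z) \to F(z)$ for all $z \geq 0$ simultaneously. (A small bookkeeping point: Lemma \ref{monotone_estimator_lemma} is stated for functions on $\RR$, so I would extend $\hat{F}_n^0$ and $F$ by $0$ on $(-\infty,0]$, which keeps them monotone.) This completes the argument.

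The main obstacle I anticipate is the uniform control of $K_n(z)$ — concretely, ruling out that $\int_0^z(z-t)^{d/2}\,\mathrm d\hat{F}_n^0(t)$ (equivalently $\hat{F}_n^0(z)$ itself) drifts off to infinity along some subsequence. The clean way around this is to observe that the Grönwall argument in the proof of Theorem \ref{thm_G_identifiable} can be run with $F_2 \equiv 0$ (so $G_{F_2} \equiv 0$, $C(z) = \exp(\kappa_d\int_0^z(z-t)^{d/2}\mathrm d\hat F_n^0(t))$ still depends on $\hat F_n^0$, so this needs slightly more care) — alternatively, and more robustly, derive directly from $\mathrm d\hat F_n^0 = \exp(\kappa_d \int_0^h (h-t)^{d/2}\mathrm d\hat F_n^0(t))\,\mathrm d\hat G_n(h)$ a self-improving inequality for $u_n(h) := \int_0^h (h-t)^{d/2}\,\mathrm d\hat F_n^0(t)$ on $[0,z]$ in terms of $\hat G_n(z)$, and close it with Theorem \ref{gronwall_thm}. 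Once that a priori bound is in hand, the rest is the routine chaining described above.
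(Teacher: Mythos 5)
Your plan has a genuine gap at the very first step: you claim that $\sup_{h\in[0,z]}|\hat G_n(h)-G_F(h)|\to 0$ follows from pointwise convergence plus monotonicity because ``$G_F$ is continuous on $[0,z]$.'' But $G_F$ is \emph{not} continuous in general. As observed in the proof of Theorem~\ref{thm_G_identifiable}, the Lebesgue--Stieltjes measures of $G_F$ and $F$ are mutually absolutely continuous, so $G_F$ has a jump at every atom of $\mathbb F$ -- and the paper explicitly allows $\mathbb F$ to be purely atomic (Example~\ref{example_discrete_laguerre_text} takes $\mathbb F$ concentrated on $\{1,8,10\}$, and this is precisely the case simulated in Tables~\ref{table_estimator_F0}--\ref{table_estimator_F}). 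The finiteness $G_F(z)\le F(z)<\infty$ that you invoke is not a continuity statement. When $G_F$ has a jump at some $h_0\le z$, pointwise convergence of the monotone $\hat G_n$ to $G_F$ does \emph{not} force uniform convergence: $\hat G_n$ will generically not place its jump at exactly $h_0$, and then $\sup_{h\in[0,z]}|\hat G_n(h)-G_F(h)|$ stays bounded away from $0$ (of order half the jump height). Since the stability bound (\ref{G_identfiable_proof_final_eq}) you want to feed this into is stated precisely in terms of that supremum, the chain breaks at this point.

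On top of that, the control of $K_n(z)$ that you flag as the ``main obstacle'' really is an open issue in your write-up: the bootstrap you sketch (running Gr\"onwall on $u_n(h)=\int_0^h(h-t)^{d/2}\,\mathrm d\hat F_n^0(t)$ against $\hat G_n$) is plausible but is not carried out, and the dependence of $C(z)$ on $\hat F_n^0$ inside an exponential means any imprecision there is amplified. The paper's proof sidesteps \emph{both} difficulties by a soft compactness argument: truncate $\hat F_n^0$ at a level $M>F(z)$, use Helly's selection principle to extract a pointwise-convergent subsequence $\bar F_{n_k}\to \bar F$ on $[0,R)$ with $R=\sup\{h:\bar F(h)<M\}$, push this through Lemma~\ref{continuity_F_in_G} (which only needs \emph{pointwise} convergence) to get $G_{\bar F}=G_F$ on $[0,R)$, and then invoke Theorem~\ref{thm_G_identifiable} to identify $\bar F=F$ there; a standard subsequence argument then gives convergence of the full sequence, and Lemma~\ref{monotone_estimator_lemma} finishes. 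That route never needs a uniform modulus for $\hat G_n-G_F$ and never needs to exhibit an explicit bound on a Gr\"onwall constant, which is exactly what makes it work for discontinuous $F$. If you want to keep a quantitative flavour, you would first have to establish an a priori bound on $\hat F_n^0$ on $[0,z]$ uniformly in $n$ (so that $K_n(z)$ is controlled) and, more importantly, replace the $\sup$-norm on the right-hand side of (\ref{G_identfiable_proof_final_eq}) by a metric that metrizes weak/pointwise convergence for monotone functions (e.g.\ L\'evy-type), which requires reproving the stability estimate; as written, the argument does not go through.
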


\begin{proof}
Let $(\Omega, \mathcal{A},\PP)$ be a probability space supporting a Poisson process $\eta$, with intensity measure $\nu_d \times \mathbb{F}$. By Corollary \ref{corr_consistency_Gn} there exists a set $\Omega_0 \in \mathcal{A}$ with $\PP(\Omega_0)=1$ such that for all $\omega \in \Omega_0$ and $z \geq 0$ we have $\lim_{n \to \infty}\hat{G}_n(z;\omega) = G_F(z)$. Let $z \geq 0$, we show that $\lim_{n \to \infty}\hat{F}_n^0(z;\omega) = F(z)$.

Pick $M > 0$ such that $F(z) < M$. For $n \in \NN$ and $h \geq 0$, define: $\bar{F}_n(h) = \min\{\hat{F}_n^0(h;\omega), M\}$. Then, $(\bar{F}_n)_{n \geq 1}$ is a uniformly bounded sequence of monotone functions. Let $(n_l)_{l \geq 1} \subset (n)_{n\geq 1}$ be an arbitrary subsequence. By Helly's selection principle there exists a further subsequence $(n_k)_{k\geq 1} \subset (n_l)_{l \geq 1}$ such that $\bar{F}_{n_k}$ converges pointwise to some monotone function $\bar{F}$ as $k \to \infty$. This implies that $\lim_{k \to \infty} \hat{F}_{n_k}^0(h;\omega) = \lim_{k \to \infty}\bar{F}_{n_k}(h) = \bar{F}(h)$ for all $h \in [0,R)$ with $R:= \sup\{h\geq 0: \bar{F}(h) < M\}$. By Lemma \ref{continuity_F_in_G} we obtain: 
\[\lim_{k \to \infty}\hat{G}_{n_k}(h;\omega) := \lim_{k \to \infty}G_{\hat{F}_{n_k}^0(\blank;\omega)}(h) = G_{\bar{F}}(h) \text{ \ for all } h \in [0,R).\]
Because the whole sequence $\hat{G}_n(h;\omega)$ converges to $G_F(h)$ as $n \to \infty$, for $h \geq 0$, we obtain $G_F(h) = G_{\bar{F}}(h)$ for all $h \in [0,R)$. Theorem \ref{thm_G_identifiable} now yields $F(h) = \bar{F}(h)$ for all $h \in [0,R)$, and since $z \in [0,R)$ we have in particular $F(z) = \bar{F}(z)$. As a consequence: $\lim_{k \to \infty}\smash{\hat{F}_{n_k}^0}(z;\omega) = F(z)$. Because the initial subsequence was chosen arbitrarily, the whole sequence converges: $\lim_{n \to \infty}\smash{\hat{F}_n^0}(z;\omega) = F(z)$.
\end{proof}

In Figure \ref{figure_example_simulation_Fn0} a single realization of $\hat{G}_n$ and $\hat{F}_n^0$ are shown. We present additional simulation results in section \ref{section_simulations}.

\section{Inference via the volume-biased weight distribution}\label{section_second_estimator}
\subsection{Definition of an estimator}
In this section we define a second estimator for $F$, which depends on all points of $\eta^*$ in $W_n \times (0,\infty)$ as well as the volumes of the Laguerre cells corresponding to these points. As such, this estimator depends on more data compared to the estimator in the previous section. First, we present a result for Poisson-Laguerre tessellations in $\RR^d$, the estimator itself is defined specifically for the planar case ($d=2$). Suppose for now that $\mathbb{F}$ is a finite measure, such that $\eta$ may be interpreted as an independently marked Poisson process. Because $\mathbb{F}$ then determines the distribution of the weights ($h$-coordinates) of the points of $\eta$, a natural question is to ask how the distribution of the weights of the points of $\eta^*$ is related to $\mathbb{F}$. As it turns out, it is more tractable to study a biased or weighted version of this distribution. We introduce the so-called volume-biased weight distribution in the following definition, which is also well-defined if $\mathbb{F}$ is not a finite measure:

\begin{definition}[volume-biased weight distribution]\label{def_biased_weight_dist}
    Let $\eta$ be a Poisson process on $\RR^d \times (0,\infty)$, $d \geq 2$, with intensity measure $\nu_d \times \mathbb{F}$. Here, $\mathbb{F}$ is a locally finite measure concentrated on $(0,\infty)$. Let $A \in \mathcal{B}(\RR)$, define the following probability measure:
    \begin{equation}
        \mathbb{F}^V(A) := \EE\left(\sum_{(x,h) \in \eta} \mathds{1}_{[0,1]^d}(x)\mathds{1}_A(h)\nu_d\left(C((x,h),\eta)\right) \right). \label{volume_biased_weight_dist_def_eq}
    \end{equation}
\end{definition}

Consider the Poisson-Laguerre tessellation generated by $\eta$, then the interpretation of $\mathbb{F}^V$ is as follows. $\mathbb{F}^V$ describes the distribution of the random weight associated with a randomly chosen Laguerre cell, the probability of picking any given cell being proportional to its volume. Because there is an infinite number of Laguerre cells in the tessellation, care needs to be taken in making this statement precise. This can be done via Palm calculus for marked point processes, see for instance chapter 3 in \cite{Schneider2008}. Note that the sum in (\ref{volume_biased_weight_dist_def_eq}) effectively only sums over points $(x,h) \in \eta$ with a Laguerre cell $C((x,h),\eta)$ of positive volume. Hence, it can also be seen as a sum over points of $\eta^*$. From its definition it is not immediately obvious that $\mathbb{F}^V$ is a well-defined probability measure. Specifically, it is not immediately evident that $\mathbb{F}^V(\RR) = 1$. We address this in the proof of the following theorem, where we derive the CDF (Cumulative Distribution Function) associated with $\mathbb{F}^V$.

\begin{theorem}\label{weighted_height_dist_thm}
    Let $\eta$ be a Poisson process as in Definition \ref{def_biased_weight_dist}, and let $z \geq 0$. Define $F(z) := \mathbb{F}((0,z])$ and $F^V(z) := \mathbb{F}^V((0,z])$, the distribution functions corresponding to $\mathbb{F}$ and $\mathbb{F}^V$ respectively. The measure $\mathbb{F}^V$ is a probability measure and $F^V$ is given by:
    \begin{align*}
        F^V(z) &= 1 - \exp\left(-\kappa_d \int_0^z (z-t)^{\frac{d}{2}}\mathrm{d}F(t)\right) + \\
         &\phantom{=} \text{ \ } + \frac{\omega_d}{2} \int_z^\infty  \exp\left(-\kappa_d \int_0^{u} \left(u -t \right)^{\frac{d}{2}}\mathrm{d}F(t) \right)\int_0^z (u-h)^{\frac{d}{2}-1} \mathrm{d}F(h)\mathrm{d}u.
    \end{align*}
\end{theorem}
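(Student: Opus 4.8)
The plan is to compute the expectation in \eqref{volume_biased_weight_dist_def_eq} by the same two-step procedure used in the proof of Lemma \ref{dependent_thinning_measure}: first apply the Mecke equation to turn the sum over $\eta$ into an integral against $\nu_d \times \mathbb{F}$, and then handle the volume term $\nu_d(C((x,h),\eta+\delta_{(x,h)}))$ by Fubini, writing it as $\int_{\RR^d}\mathds{1}\{x+y \in C((x,h),\eta+\delta_{(x,h)})\}\,\mathrm{d}y$ and taking the expectation inside. The inner probability $\PP(x+y \in C((x,h),\eta+\delta_{(x,h)}))$ is exactly the void probability computed in \eqref{empty_set_probability}, namely $\exp(-\kappa_d\int_0^{\Vert y\Vert^2+h}(\Vert y\Vert^2+h-t)^{d/2}\,\mathrm{d}F(t))$, since the added atom $(x,h)$ never lies in the forbidden set $A_{x,h,y}$. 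After the $x$-integral over $[0,1]^d$ contributes a factor $1$, I am left with
\begin{align*}
    \mathbb{F}^V(A) = \int_0^\infty \mathds{1}_A(h)\int_{\RR^d}\exp\left(-\kappa_d\int_0^{\Vert y\Vert^2+h}(\Vert y\Vert^2+h-t)^{\frac{d}{2}}\,\mathrm{d}F(t)\right)\mathrm{d}y\,\mathrm{d}F(h).
\end{align*}

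Next I would pass to polar coordinates in the $y$-integral, $\mathrm{d}y = \omega_d r^{d-1}\,\mathrm{d}r$, and substitute $u = r^2 + h$ (so $r^{d-1}\,\mathrm{d}r = \tfrac12 (u-h)^{d/2-1}\,\mathrm{d}u$, with $u$ ranging over $(h,\infty)$), giving
\begin{align*}
    \mathbb{F}^V(A) = \frac{\omega_d}{2}\int_0^\infty \mathds{1}_A(h)\int_h^\infty \exp\left(-\kappa_d\int_0^{u}(u-t)^{\frac{d}{2}}\,\mathrm{d}F(t)\right)(u-h)^{\frac{d}{2}-1}\,\mathrm{d}u\,\mathrm{d}F(h).
\end{align*}
To obtain $F^V(z)$ I take $A = (0,z]$ and split the $u$-integral at $z$. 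On the region $h \le u \le z$ I would swap the order of the $h$- and $u$-integrals (both over $\{0 < h \le u \le z\}$) so that the inner integral becomes $\int_0^u (u-h)^{d/2-1}\,\mathrm{d}F(h)$; recognizing that $\tfrac{\omega_d}{2}\int_0^u(u-h)^{d/2-1}\,\mathrm{d}F(h) = \tfrac{d}{2}\kappa_d\int_0^u(u-h)^{d/2-1}\,\mathrm{d}F(h)$ is (via the integration-by-parts identity from the proof of Theorem \ref{thm_G_identifiable}) precisely $\tfrac{\mathrm{d}}{\mathrm{d}u}\int_0^u(u-t)^{d/2}\,\mathrm{d}F(t)$, this region telescopes: $\int_0^z \exp(-\kappa_d\,\psi(u))\,\mathrm{d}\psi(u) = 1 - \exp(-\kappa_d\psi(z))$ where $\psi(u) := \int_0^u(u-t)^{d/2}\,\mathrm{d}F(t)$. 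This yields the first term $1 - \exp(-\kappa_d\int_0^z(z-t)^{d/2}\,\mathrm{d}F(t))$. The region $u > z$ keeps $h \le z$, and after swapping orders gives exactly the second (double-integral) term in the statement.

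Finally, for the claim that $\mathbb{F}^V$ is a genuine probability measure, I would take $z \to \infty$ (equivalently $A = (0,\infty)$): the first term tends to $1$ provided the second term vanishes, and the second term with $A=(0,\infty)$ is $\tfrac{\omega_d}{2}\int_0^\infty \exp(-\kappa_d\psi(u)) u^{d/2-1}$-type mass over $u$ which one shows is finite and, more to the point, that $\exp(-\kappa_d\int_0^z(z-t)^{d/2}\mathrm{d}F(t)) \to 0$ and the tail double integral $\to 0$ as $z\to\infty$; alternatively, note $\mathbb{F}^V(\RR) = \mathbb{F}^V((0,\infty))$ equals the total expected volume of cells with generators in $[0,1]^d$, which is $\nu_d([0,1]^d)=1$ by stationarity of the tessellation and the fact that the cells tile $\RR^d$ — this is the cleanest justification and avoids a delicate limit computation. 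I expect the main obstacle to be the bookkeeping in the two Fubini swaps and, in particular, getting the identification $\tfrac{\omega_d}{2}(u-h)^{d/2-1}\,\mathrm{d}F(h) \leftrightarrow \mathrm{d}\psi(u)$ cleanly enough that the telescoping on the region $u \le z$ is rigorous; a secondary subtlety is justifying the interchange of expectation and the $\mathrm{d}y$-integral (Tonelli, everything nonnegative) and confirming the added-atom term is harmless exactly as in Lemma \ref{dependent_thinning_measure}.
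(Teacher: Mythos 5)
Your derivation of the formula for $F^V$ follows the paper's proof essentially step for step: the Mecke equation combined with Fubini/Tonelli reduces the cell-volume sum to the intensity of the thinning $\eta^y$ (this is exactly the combination of Lemma \ref{dependent_thinning_measure} with the identity $\nu_d(C) = \int_{\RR^d}\mathds{1}\{x+y \in C\}\,\mathrm{d}y$ that the paper also uses), then polar coordinates, the substitution $u=r^2+h$, the split of the $u$-integral at $z$, and the observation that on $[0,z]$ the integrand is an exact derivative. One small bookkeeping slip in your narration: $\tfrac{\omega_d}{2}\int_0^u(u-h)^{d/2-1}\,\mathrm{d}F(h)$ equals $\kappa_d\,\psi'(u)$, not $\psi'(u)$, where $\psi(u) = \int_0^u (u-t)^{d/2}\,\mathrm{d}F(t)$; this extra factor $\kappa_d$ is exactly what makes $\int_0^z\exp(-\kappa_d\psi(u))\cdot\kappa_d\psi'(u)\,\mathrm{d}u = 1-\exp(-\kappa_d\psi(z))$ come out without the $1/\kappa_d$ prefactor, so your two omissions cancel and the stated conclusion is correct. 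The genuine divergence from the paper is in the final claim that $\mathbb{F}^V$ is a probability measure. The paper verifies $\lim_{z\to\infty}F^V(z)=1$ by dominated convergence applied to the integral representation it has just derived; you instead propose the observation that $\mathbb{F}^V(\RR)=\EE\bigl[\sum_{(x,h)\in\eta}\mathds{1}_{[0,1]^d}(x)\nu_d(C((x,h),\eta))\bigr]$ equals $\nu_d([0,1]^d)=1$, by stationarity and the a.s.\ tiling property. That argument is correct (it is a standard mass-transport identity: for a.e.\ $y$ exactly one cell contains $y$, and integrating $\PP(\text{nucleus of the cell at }y\in[0,1]^d)$ over $y$ and translating yields $1$), and it is conceptually cleaner and more robust since it does not touch the explicit formula at all. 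The paper's DCT route has the virtue of being self-contained given the computation already in hand; your route replaces a limit computation with an appeal to stationarity and the tessellation property, which the paper has already established via Proposition 3.6 of \cite{Gusakova2024_2}. Either is a complete proof.
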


\begin{proof}
    By the translation invariance of Lebesgue measure and Fubini's theorem, we may write:
    \begin{align}
        F^V(z) &= \EE\left(\sum_{(x,h) \in \eta} \mathds{1}_{[0,1]^d}(x)\mathds{1}_{(0,z]}(h)\nu_d\left(C((x,h),\eta) - x \right) \right) \nonumber \\
        &= \EE\left(\sum_{(x,h) \in \eta} \mathds{1}_{[0,1]^d}(x)\mathds{1}_{(0,z]}(h) \int_{\RR^d}\mathds{1}\{y \in C((x,h),\eta) - x \}\mathrm{d}y \right)\nonumber \\
        &= \int_{\RR^d}\EE\left(\sum_{(x,h) \in \eta} \mathds{1}_{[0,1]^d}(x)\mathds{1}_{(0,z]}(h) \mathds{1}\{x + y \in C((x,h),\eta) \} \right)\mathrm{d}y \nonumber \\
        &= \int_{\RR^d}\EE\left(\eta^y([0,1]^d \times (0,z]) \right)\mathrm{d}y. \label{volume_weighted_proof_measure_eq}
    \end{align}
With $\eta^y$ as in (\ref{dependent_thinning_def}). In Lemma \ref{dependent_thinning_measure} we computed the expectation in (\ref{volume_weighted_proof_measure_eq}). Plugging in this expression, and passing to polar coordinates by substituting $y = r\theta$, with $r \geq 0$ and $\theta \in \Sp^{d-1}$, we obtain:
\begin{align}
    F^V(z) &= \int_{\RR^d} \int_0^z \exp\left(-\kappa_d \int_0^{\Vert y \Vert^2 + h} \left(\Vert y \Vert^2 + h -t \right)^{\frac{d}{2}}\mathrm{d}F(t) \right)\mathrm{d}F(h) \mathrm{d}y  \nonumber\\
    &= \omega_d \int_0^z \int_0^\infty \exp\left(-\kappa_d \int_0^{r^2 + h} \left(r^2 + h -t \right)^{\frac{d}{2}}\mathrm{d}F(t) \right)r^{d-1}\mathrm{d}r \mathrm{d}F(h) \label{FV_proof_eq1} \\
    &= \frac{\omega_d}{2} \int_0^z \int_h^\infty \exp\left(-\kappa_d \int_0^{u} \left(u -t \right)^{\frac{d}{2}}\mathrm{d}F(t) \right)(u-h)^{\frac{d}{2}-1}\mathrm{d}u \mathrm{d}F(h) \label{FV_proof_eq2} \\
    &= \frac{\omega_d}{2} \int_0^\infty  \exp\left(-\kappa_d \int_0^{u} \left(u -t \right)^{\frac{d}{2}}\mathrm{d}F(t) \right)\int_0^{\min\{u,z\}}(u-h)^{\frac{d}{2}-1} \mathrm{d}F(h)\mathrm{d}u.  \label{cdf_convenient_form}
\end{align}
In (\ref{FV_proof_eq1}) and (\ref{cdf_convenient_form}) we apply Fubini's theorem, and in (\ref{FV_proof_eq2}) we substitute $u = r^2+h$. We can now write $F^V(z)$ as a sum of two integrals:
\begin{align}
    \begin{split}\label{FV_proof_integral_split}
    F^V(z) &= \frac{\omega_d}{2} \int_0^z  \exp\left(-\kappa_d \int_0^{u} \left(u -t \right)^{\frac{d}{2}}\mathrm{d}F(t) \right)\int_0^u (u-h)^{\frac{d}{2}-1} \mathrm{d}F(h)\mathrm{d}u +  \\
    &\phantom{=} \text{ \ } + \frac{\omega_d}{2} \int_z^\infty  \exp\left(-\kappa_d \int_0^{u} \left(u -t \right)^{\frac{d}{2}}\mathrm{d}F(t) \right)\int_0^z (u-h)^{\frac{d}{2}-1} \mathrm{d}F(h)\mathrm{d}u.   
    \end{split}
\end{align}
The first integral of (\ref{FV_proof_integral_split}) can be calculated explicitly since the integrand has an explicit primitive. Using the fact $\omega_d = d \kappa_d$, the first term is given by:
\[\left[ -\exp\left(-\kappa_d \int_0^{u} \left(u -t \right)^{\frac{d}{2}}\mathrm{d}F(t) \right)\right]_0^z = 1 - \exp\left(-\kappa_d \int_0^z (z-t)^{\frac{d}{2}}\mathrm{d}F(t)\right).\]
Plugging this back into (\ref{FV_proof_integral_split}) yields the expression for $F^V$ as stated in the theorem. Finally, via (\ref{cdf_convenient_form}) we can show that $\lim_{z \to \infty} F^V(z) = 1$. After all, the integrand in (\ref{cdf_convenient_form}) (considering the integral w.r.t. $u$) can be bounded from above using the inequality $\min\{u,z\} \leq u$. Via the dominated convergence theorem it follows that:
\begin{align*}
    \lim_{z \to \infty} F^V(z) &= \frac{\omega_d}{2} \int_0^\infty   \exp\left(-\kappa_d \int_0^{u} \left(u -t \right)^{\frac{d}{2}}\mathrm{d}F(t) \right)\lim_{z \to \infty}\int_0^{\min\{u,z\}}(u-h)^{\frac{d}{2}-1} \mathrm{d}F(h)\mathrm{d}u\\
    &= \int_0^\infty  \exp\left(-\kappa_d \int_0^{u} \left(u -t \right)^{\frac{d}{2}}\mathrm{d}F(t) \right)\frac{d \kappa_d}{2}\int_0^u (u-h)^{\frac{d}{2}-1} \mathrm{d}F(h)\mathrm{d}u \\
    &= \left[ -\exp\left(-\kappa_d \int_0^{u} \left(u -t \right)^{\frac{d}{2}}\mathrm{d}F(t) \right)\right]_0^\infty = 1.
\end{align*}
\end{proof}

The Stieltjes integrals in the expression for $F^V$ may be written as Lebesgue integrals using integration by parts. For instance:
\begin{equation}
    \int_0^z (z-t)^{\frac{d}{2}}\mathrm{d}F(t) = 0\cdot F(z) - z^{\frac{d}{2}}F(0) - \int_0^z F(t)\mathrm{d}\left((z-t)^{\frac{d}{2}} \right)(t) = \frac{d}{2}\int_0^z F(t)(z-t)^{\frac{d}{2}-1}\mathrm{d}t.\label{integration_by_parts}
\end{equation}
As announced in the beginning of this section, we will now focus on the case $d=2$, which is important for practical applications. In that case, Theorem \ref{weighted_height_dist_thm} and (\ref{integration_by_parts}) yield the following expression for $F^V$.

\begin{corollary}
    Let $z \geq 0$, if $d=2$ the CDF $F^V$ is given by:
    \begin{equation}
        F^V(z) = 1 - \exp\left(-\pi \int_0^z F(t)\mathrm{d}t\right) + \pi F(z)\int_z^\infty  \exp\left(-\pi \int_0^{u} F(t)\mathrm{d}t \right)\mathrm{d}u.\label{FV_planar_expression}
    \end{equation}
\end{corollary}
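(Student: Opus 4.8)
The plan is to obtain the corollary as a direct specialization of Theorem~\ref{weighted_height_dist_thm} to the planar case, followed by the substitution of Stieltjes integrals by Lebesgue integrals via the identity~(\ref{integration_by_parts}). First I would plug $d=2$ into the general formula for $F^V$: this replaces the constants by $\kappa_2 = \pi$ and $\tfrac{\omega_2}{2} = \tfrac{2\pi}{2} = \pi$, and turns the exponents into $\tfrac{d}{2} = 1$ and $\tfrac{d}{2}-1 = 0$. In particular the inner integral $\int_0^z (u-h)^{\frac{d}{2}-1}\,\mathrm{d}F(h)$ collapses to $\int_0^z \mathrm{d}F(h) = F(z) - F(0) = F(z)$, where I would note that $F(0) = \mathbb{F}((0,0]) = 0$ so that the term pulls out of the $u$-integral cleanly.

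Next I would apply~(\ref{integration_by_parts}) twice: once to rewrite $\int_0^z (z-t)\,\mathrm{d}F(t) = \int_0^z F(t)\,\mathrm{d}t$ inside the first exponential, and once to rewrite $\int_0^u (u-t)\,\mathrm{d}F(t) = \int_0^u F(t)\,\mathrm{d}t$ inside the exponential under the remaining $u$-integral. Assembling these pieces gives exactly
\[
F^V(z) = 1 - \exp\!\left(-\pi \int_0^z F(t)\,\mathrm{d}t\right) + \pi F(z) \int_z^\infty \exp\!\left(-\pi \int_0^u F(t)\,\mathrm{d}t\right)\mathrm{d}u,
\]
which is the claimed expression.

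There is essentially no substantive obstacle here, since Theorem~\ref{weighted_height_dist_thm} already does all the analytic work (including the verification that $F^V$ is a genuine CDF) and~(\ref{integration_by_parts}) is stated in full generality; the only points requiring a word of care are that $F(0)=0$ so the factor $F(z)$ can be extracted, and that the integrand of the remaining $u$-integral is nonnegative and integrable near infinity so that the expression is well defined — both of which are immediate from the boundedness of $F$ on $[0,z]$ and the decay of the exponential already exploited in the proof of Theorem~\ref{weighted_height_dist_thm}.
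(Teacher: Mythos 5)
Your proof is correct and follows exactly the approach the paper intends (and states tersely): specialize Theorem~\ref{weighted_height_dist_thm} to $d=2$, so $\kappa_2=\pi$, $\omega_2/2=\pi$, the exponent $\frac{d}{2}-1=0$ collapses the inner Stieltjes integral to $F(z)$, and then apply~(\ref{integration_by_parts}) to convert $\int_0^z(z-t)\,\mathrm{d}F(t)$ and $\int_0^u(u-t)\,\mathrm{d}F(t)$ into Lebesgue integrals of $F$. No gaps; the observation that $F(0)=0$ is the only small point worth spelling out, and you did.
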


Let us now introduce some convenient notation which will be used throughout this section. For $z \geq 0$, $F \in \mathcal{F}_{+}$ and $m \geq 0$ we define:
\begin{align*}
    V(z;F,m) &:= 1 - \exp\left(-\pi \int_0^z F(t)\mathrm{d}t\right) + \pi F(z)\left(m - \int_0^z  \exp\left(-\pi \int_0^{u} F(t)\mathrm{d}t \right)\mathrm{d}u\right).\\
    m_F &:= \int_0^\infty  \exp\left(-\pi \int_0^{u} F(t)\mathrm{d}t \right)\mathrm{d}u.
\end{align*}
Note that if $m = m_F$, then $V(\blank;F,m) = F^V$, with $F^V$ as in (\ref{FV_planar_expression}). In other words, $V(\blank;F,m)$ is then the volume-biased weight distribution induced by $F$. We obtain the following identifiability result:

\begin{theorem}\label{FV_identifiability_theorem}
    Let $F_1, F_2 \in \mathcal{F}_{+}$, let $R > 0$. If $m_{F_1} = m_{F_2}$ and $V(z;F_1,m_{F_1}) = V(z;F_2,m_{F_2})$ for all $z \in [0,R)$, then $F_1(z) = F_2(z)$ for all $z \in [0,R)$. Consequently, if $m_{F_1} = m_{F_2}$ and $V(\blank;F_1,m_{F_1}) = V(\blank;F_2,m_{F_2})$, then $F_1 = F_2$.
\end{theorem}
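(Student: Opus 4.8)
The plan is to mimic the structure of the proof of Theorem \ref{thm_G_identifiable}: express $F_i$ in terms of the observable data $(V(\blank;F_i,m_{F_i}), m_{F_i})$ via an integral equation, and then bound $|F_1(z)-F_2(z)|$ by a Grönwall-type argument. The key structural observation is that the expression in \eqref{FV_planar_expression} can be rearranged to solve for $F$. Writing $\Phi_F(z) := \exp(-\pi\int_0^z F(t)\mathrm{d}t)$ and noting $m_F - \int_0^z \Phi_F(u)\mathrm{d}u = \int_z^\infty \Phi_F(u)\mathrm{d}u > 0$, we get from $V(z;F,m_F) = 1 - \Phi_F(z) + \pi F(z)\int_z^\infty \Phi_F(u)\mathrm{d}u$ the identity
\[
F(z) = \frac{V(z;F,m_F) - 1 + \Phi_F(z)}{\pi \bigl(m_F - \int_0^z \Phi_F(u)\mathrm{d}u\bigr)}.
\]
This exhibits $F(z)$ as a function of $z$, the data, and the quantities $\Phi_F$ and $\int_0^z\Phi_F$, both of which are themselves continuous (in fact Lipschitz on compacts) functionals of $F|_{[0,z]}$ through $\int_0^{(\cdot)} F(t)\mathrm{d}t$. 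Since $m_{F_1} = m_{F_2} =: m$ and the $V$'s agree on $[0,R)$, the numerators and denominators differ only through the $\Phi$-terms.

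The main steps, in order: (1) Derive the displayed inversion formula for each $F_i$, $i \in \{1,2\}$, and fix $z \in [0,R)$, abbreviating $v := V(z;F_1,m) = V(z;F_2,m)$, $I_i(z) := \int_0^z F_i(t)\,\mathrm{d}t$, $\Phi_i := e^{-\pi I_i(z)}$, $D_i(z) := m - \int_0^z \Phi_i(u)\,\mathrm{d}u$; note $D_i(z) \geq \int_z^R \Phi_i(u)\,\mathrm{d}u$ together with any crude lower bound valid on $[0,R)$ (this uses that $R < \infty$ only to keep constants finite — if $R=\infty$ one localizes to $[0,R')$ for arbitrary $R' < \infty$ first and then lets $R' \to \infty$, exactly as in Theorem \ref{thm_G_identifiable}). (2) Subtract the two inversion formulas and, over a common denominator, bound
\[
|F_1(z)-F_2(z)| \leq c_1(z)\,|\Phi_1(z)-\Phi_2(z)| + c_2(z)\Bigl|\int_0^z (\Phi_1(u)-\Phi_2(u))\,\mathrm{d}u\Bigr|,
\]
where $c_1,c_2$ are explicit increasing functions of $z$ built from $1/D_i$, $\Phi_i$, $F_i(z)$ and $v$. (3) Use $|e^{-a}-e^{-b}| \leq |a-b|$ for $a,b \geq 0$ to get $|\Phi_1(u)-\Phi_2(u)| \leq \pi\int_0^u |F_1(t)-F_2(t)|\,\mathrm{d}t$, hence also $|\int_0^z(\Phi_1-\Phi_2)| \leq \pi z \int_0^z |F_1-F_2|$. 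Substituting gives $|F_1(z)-F_2(z)| \leq \beta(z)\int_0^z |F_1(s)-F_2(s)|\,\mathrm{d}s$ with $\beta$ non-negative, increasing, locally integrable. (4) Apply Theorem \ref{gronwall_thm} (with $\alpha \equiv 0$) and \eqref{gronwall_thm_note} to conclude $|F_1(z)-F_2(z)| \leq 0$, i.e. $F_1(z) = F_2(z)$, for every $z \in [0,R)$. The "In particular" statement follows by taking $R \to \infty$.

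There are two places where care is needed but which I expect to be routine rather than genuinely hard: making the denominator bound $D_i(z) \geq \delta(z) > 0$ uniform on compact $z$-intervals (one just needs $\Phi_i \geq \Phi_i(R^-) > 0$ on $[0,R)$, or a monotonicity argument, since $\Phi_i$ is decreasing and $\int_z^\infty\Phi_i$ is continuous and positive), and checking that $\beta(z)$ is genuinely non-decreasing so that \eqref{gronwall_thm_note} applies — this holds because each constituent ($F_i(z)$, $1/D_i(z)$, $\Phi_i$ appears only through increasing combinations, and $v = V(z;\cdot)$ is itself a CDF hence bounded) is monotone in $z$ or can be replaced by its sup over $[0,z]$, exactly as $C(z)$ was handled in the proof of Theorem \ref{thm_G_identifiable}. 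The genuine obstacle, such as it is, is purely bookkeeping: unlike the clean single-exponential structure of $G_F$, here $F$ is recovered through a \emph{ratio} whose denominator also depends on $F$, so one must control the difference of reciprocals $1/D_1 - 1/D_2$; this is handled by writing $1/D_1 - 1/D_2 = (D_2 - D_1)/(D_1 D_2)$ and bounding $|D_2 - D_1| = |\int_0^z(\Phi_2-\Phi_1)|$ by the same $\pi z\int_0^z|F_1-F_2|$ term, which only changes the constants in $\beta$. Once that substitution is made, the Grönwall machinery closes the argument verbatim.
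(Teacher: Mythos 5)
Your argument is correct, and it takes a genuinely different route from the paper's. The paper's proof works at the level of Stieltjes measures: it computes the Radon--Nikodym derivative $\mathrm{d}F_i^V/\mathrm{d}F_i(z)=\pi\int_z^\infty\exp(-\pi\int_0^uF_i)\,\mathrm{d}u=:p_i(z)$, writes $F_i(z)=\int_0^z p_i(h)^{-1}\,\mathrm{d}F_i^V(h)$, and then splits $F_1(z)-F_2(z)$ into $\int_0^z\bigl(p_1^{-1}-p_2^{-1}\bigr)\,\mathrm{d}F_1^V$ plus $\int_0^z p_2^{-1}\,\mathrm{d}(F_1^V-F_2^V)$, handling the second piece by integration by parts. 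You instead solve the \emph{defining equation} $V(z;F,m_F)=1-\Phi_F(z)+\pi F(z)\,D_F(z)$ pointwise for $F(z)$ (here $D_F(z)=\int_z^\infty\Phi_F$), which makes the Stieltjes integral and the integration-by-parts step disappear entirely: the difference of the two pointwise formulas, once you control $1/D_1-1/D_2=(D_2-D_1)/(D_1D_2)$, reduces directly to a Gr\"onwall inequality with $\alpha\equiv 0$. The trade-off is that the paper's form of the argument proves the stronger stability estimate (\ref{FV_identifiable_proof_final_eq}) --- a Lipschitz bound of $\sup|F_1-F_2|$ in terms of $|m_{F_1}-m_{F_2}|$ and $\sup|F_1^V-F_2^V|$ --- whereas your version uses the equality of the data from the start and only delivers identifiability; to get the stability version you would have to carry the $|\Phi_1-\Phi_2|$ and $|D_1-D_2|$ error terms through rather than zeroing them out, after which the two proofs essentially coincide in content. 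Your side remarks are sound: $D_i(z)=\int_z^\infty\Phi_i>0$ strictly since $\Phi_i>0$, so the denominators cause no trouble on compacts, and any locally bounded $\beta$ can be replaced by $\sup_{[0,z]}\beta$ to meet the monotonicity hypothesis of (\ref{gronwall_thm_note}).
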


The proof of Theorem \ref{FV_identifiability_theorem} as well as the proofs of most of the remaining lemma's in this section are postponed to Appendix \ref{appendix_proofs}. The techniques used for proving these results are similar to the techniques used in section \ref{section_first_estimator}. We now define the following natural estimator for the distribution function $F^V$:
\begin{equation}
    \tilde{F}_n^V(z) := \frac{1}{\nu_d(W_n)}\sum_{(x,h) \in \eta} \mathds{1}_{W_n}(x)\mathds{1}_{(0,z]}(h)\nu_d\left(C((x,h),\eta)\right).\label{Fv_estimator_def1}
\end{equation}
Alternatively, the following estimator for $F^V$ may be defined:
\begin{equation}
    \hat{F}_n^V(z) := \frac{\sum_{(x,h) \in \eta} \mathds{1}_{W_n}(x)\mathds{1}_{(0,z]}(h)\nu_d\left(C((x,h),\eta)\right)}{\sum_{(x,h) \in \eta} \mathds{1}_{W_n}(x)\nu_d\left(C((x,h),\eta)\right)}.\label{Fv_estimator_def2}
\end{equation}
\begin{remark}
    Note that the estimators $\hat{F}_n^V$ and $\tilde{F}_n^V$ for $F^V$ do not incorporate edge effects. For instance, a Laguerre cell may be partially observed through the observation window $W_n$, such that computation of the estimators requires information outside of the window. In practice one could artificially shrink the observation window such that the estimators can be computed based on this smaller window.
\end{remark}

Similarly to $\hat{F}_n^0$, we can define an inverse estimator for $F$ using an estimator for $F^V$. We choose to use $\hat{F}_n^V$ for this purpose, since it satisfies $\lim_{z \to \infty} \hat{F}_n^V(z) = 1$, in general this is not the case for $\tilde{F}_n^V$.  In view of Theorem \ref{FV_identifiability_theorem} we need to keep in mind that the constant $m_F$ is unknown. We can resolve this by first using $\hat{F}_n^0$ to estimate $m_F$. That is, we define:
\begin{equation}
    \hat{m}_n := m_{\hat{F}_n^0} = \int_0^\infty  \exp\left(-\pi \int_0^{u} \hat{F}_n^0(t)\mathrm{d}t \right)\mathrm{d}u. \label{mf_estimator}
\end{equation}
Finally, we define our second estimator for $F$ as follows:

\begin{definition}[Second inverse estimator of F]\label{inverse_estimator_F_def}
    Define $\hat{F}_n$ to be the unique function $\hat{F}_n \in \mathcal{F}_{+}$ which satisfies for all $z \geq 0$: 
    \begin{equation}
        \hat{F}_n^V(z) = 1 - \exp\left(-\pi \int_0^z \hat{F}_n(t)\mathrm{d}t\right) + \pi \hat{F}_n(z)\left(\hat{m}_n - \int_0^z  \exp\left(-\pi \int_0^{u} \hat{F}_n(t)\mathrm{d}t \right)\mathrm{d}u\right), \label{inverse_estimator_F_eq}
    \end{equation}
    with $\hat{F}_n^V$ as in (\ref{Fv_estimator_def2}) and $\hat{m}_n$ as in (\ref{mf_estimator}). That is, $\hat{F}_n$ is the unique function $\hat{F}_n \in \mathcal{F}_{+}$ which satisfies for all $z \geq 0$: $V(z;\hat{F}_n,\hat{m}_n) = \hat{F}_n^V(z)$.
\end{definition}

We again discuss why $\hat{F}_n$ is well-defined. If there exists a function $\hat{F}_n \in \mathcal{F}_{+}$ which satisfies $V(z;\hat{F}_n,\hat{m}_n) = \smash{\hat{F}_n^V(z)}$ for all $z \geq 0$ then it is unique by Theorem \ref{FV_identifiability_theorem}. From (\ref{inverse_estimator_F_eq}) we see that $\hat{F}_n$ cannot be the zero function. Moreover, we see that $\hat{F}_n$ should satisfy the following:
\[m_{\hat{F}_n} = \lim_{z \to \infty}\int_0^z  \exp\left(-\pi \int_0^{u} \hat{F}_n(t)\mathrm{d}t \right)\mathrm{d}u = \hat{m}_n - \lim_{z \to \infty} \frac{\hat{F}_n^V(z) - 1 + \exp\left(-\pi \int_0^z \hat{F}_n(t)\mathrm{d}t\right)}{\pi \hat{F}_n(z)} = \hat{m}_n.\]
The final equality follows from the fact that $\lim_{z \to \infty}\hat{F}_n^V(z) = 1$ and $\lim_{z \to \infty}\hat{F}_n(z) > 0$, since $\hat{F}_n$ is non-zero. Therefore, $\hat{F}_n$ necessarily satisfies:
\begin{equation}
    \hat{F}_n^V(z) = 1 - \exp\left(-\pi \int_0^z \hat{F}_n(t)\mathrm{d}t\right) + \pi \hat{F}_n(z)\int_z^\infty  \exp\left(-\pi \int_0^{u} \hat{F}_n(t)\mathrm{d}t \right)\mathrm{d}u. \label{inverse_estimator_induced_FV}
\end{equation}
Recall from (\ref{FV_planar_expression}) that this means that $\hat{F}_n^V$ is the volume-biased weight distribution induced by $\hat{F}_n$. Suppose $(x_1,h_1),(x_2,h_2),\dots,\allowbreak(x_m,h_k)$ is the sorted realization of the points of $\eta^*$ with $x_1,\dots,x_k \in W_n$ and $h_1 \leq h_2 \leq \dots \leq h_k$. 
Clearly, $\smash{\hat{F}_n^V(z)}$ is piecewise constant, with jump locations at $h_1,\dots,h_k$. In the proof of Theorem \ref{FV_identifiability_theorem} we observe that the Lebesgue-Stieltjes measures associated with $\hat{F}_n$ and $V(\blank;\hat{F}_n,\hat{m}_n) = V(\blank;\hat{F}_n,\smash{m_{\hat{F}_n}}) = \smash{\hat{F}_n^V(z)}$ are mutually absolutely continuous. As a consequence, $\hat{F}_n$ is necessarily also piecewise constant with the same jump locations as $\smash{\hat{F}_n^V}$. Therefore, we simply need to specify the value of $\hat{F}_n$ at $h_1,\dots,h_k$. Taking $z=h_1$ in (\ref{inverse_estimator_F_eq}), and using the fact that $\smash{\smallint_0^{h_1}\hat{F}_n(t)\mathrm{d}t} = 0$ we can solve for $\hat{F}_n(h_1)$:
\begin{equation}
    \hat{F}_n(h_1) = \frac{\hat{F}_n^V(h_1)}{\pi\left(\hat{m}_n - h_1\right)}. \label{inverse_estimator_F_first_value}
\end{equation}
In Appendix \ref{appendix_mn} an explicit formula for $\hat{m}_n$ is given, which also shows that $\hat{m}_n > h_1$. Let $i \in \{2,\dots,k\}$ then, from the proof of Theorem \ref{weighted_height_dist_thm} it follows that for the $\hat{F}_n$ we are looking for:
\begin{align*}
    \hat{F}_n^V(h_i) &= \hat{F}_n^V(h_{i-1}) + \int_{h_{i-1}}^{h_i}\pi \int_h^\infty \exp\left(-\pi \int_0^{u} \hat{F}_n(t)\mathrm{d}t \right)\mathrm{d}u\mathrm{d}\hat{F}_n(h)\\
    &= \hat{F}_n^V(h_{i-1}) + \pi \int_{h_{i}}^\infty \exp\left(-\pi \int_0^{u} \hat{F}_n(t)\mathrm{d}t \right)\mathrm{d}u \left( \hat{F}_n(h_i) -  \hat{F}_n(h_{i-1}) \right)
\end{align*}
Hence,
\begin{equation}
    \pi \int_{h_{i}}^\infty \exp\left(-\pi \int_0^{u} \hat{F}_n(t)\mathrm{d}t \right)\mathrm{d}u = \frac{\hat{F}_n^V(h_i) - \hat{F}_n^V(h_{i-1})}{\hat{F}_n(h_i) - \hat{F}_n(h_{i-1})}.\label{F_inverse_estimator_formula_derivation}
\end{equation}
Equation (\ref{inverse_estimator_induced_FV}) may be used to obtain an expression for $\hat{F}_n^V(h_i)$, plugging (\ref{F_inverse_estimator_formula_derivation}) into this expression and solving for $\hat{F}_n(h_i)$ yields:
\begin{align}
    \hat{F}_n(h_i) &= \hat{F}_n(h_{i-1})\left(\frac{\hat{F}_n^V(h_{i}) - 1 + \exp\left(-\pi \int_0^{h_{i}} \hat{F}_n(t)\mathrm{d}t\right)}{\hat{F}_n^V(h_{i-1}) - 1 + \exp\left(-\pi \int_0^{h_{i}} \hat{F}_n(t)\mathrm{d}t\right)}\right) \nonumber \\
    &= \hat{F}_n(h_{i-1})\left(\frac{\hat{F}_n^V(h_{i}) - 1 + \exp\left(-\pi \sum_{j=1}^{i-1}(h_i - h_j)\left(\hat{F}_n(h_j) - \hat{F}_n(h_{j-1}) \right) \right)}{\hat{F}_n^V(h_{i-1}) - 1 + \exp\left(-\pi \sum_{j=1}^{i-1}(h_i - h_j)\left(\hat{F}_n(h_j) - \hat{F}_n(h_{j-1}) \right) \right)}\right). \label{inverse_estimator_F_computational_formula}
\end{align}
Note that the RHS of (\ref{inverse_estimator_F_computational_formula}) only depends on the values $\hat{F}_n(h_j)$ with $j < i$. Hence, (\ref{inverse_estimator_F_first_value}) along with (\ref{inverse_estimator_F_computational_formula}) completely defines $\hat{F}_n$. From (\ref{inverse_estimator_F_computational_formula}) it is evident that $\hat{F}_n \in \mathcal{F}_{+}$, and this expression may be used to compute $\hat{F}_n$ in practice.

\subsection{Consistency}
In this section we show that $\hat{F}_n$, as in Definition \ref{inverse_estimator_F_def}, is a strongly consistent estimator for $F$. We start with a Lemma which implies that $\hat{m}_n$ is a strongly consistent estimator for $m_F$.

\begin{lemma}\label{mf_continuity_lemma}
    Let $(F_n)_{n \geq 1}$ be a sequence in $\mathcal{F}_{+}$, and let $F \in \mathcal{F}_{+}$ be non-zero. If $\lim_{n \to \infty} F_n(z) = F(z)$ for all $z\geq 0$, then $\lim_{n \to \infty}m_{F_n} = m_F$.
\end{lemma}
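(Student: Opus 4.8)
The plan is to recognize $m_{F_n}$ and $m_F$ as integrals over $[0,\infty)$ of the functions $g_n(u) := \exp\!\left(-\pi\int_0^u F_n(t)\,\mathrm{d}t\right)$ and $g(u) := \exp\!\left(-\pi\int_0^u F(t)\,\mathrm{d}t\right)$ respectively, and to establish $m_{F_n} \to m_F$ via two applications of the dominated convergence theorem.

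First I would prove pointwise convergence $g_n(u) \to g(u)$ for each fixed $u \geq 0$. Since $F_n(u) \to F(u)$, the sequence $(F_n(u))_n$ is bounded, say by some $M_u$; monotonicity of each $F_n$ then gives $F_n(t) \leq M_u$ for all $t \in [0,u]$ and all $n$, so the constant $M_u$ dominates the $F_n$ on $[0,u]$. Because $F_n(t) \to F(t)$ for every $t \geq 0$, the dominated convergence theorem yields $\int_0^u F_n(t)\,\mathrm{d}t \to \int_0^u F(t)\,\mathrm{d}t$, and continuity of the exponential gives $g_n(u) \to g(u)$.

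Next I would produce an integrable function dominating the $g_n$ on all of $[0,\infty)$, and this is the step where the hypothesis that $F$ is non-zero is essential. Since $F$ is non-zero there is some $a > 0$ with $F(a) > 0$, and since $F_n(a) \to F(a)$ there is an $N$ with $F_n(a) \geq F(a)/2$ for all $n \geq N$. For $u \geq a$ and $n \geq N$, monotonicity gives $\int_0^u F_n(t)\,\mathrm{d}t \geq \int_a^u F_n(t)\,\mathrm{d}t \geq (u-a)F_n(a) \geq (u-a)F(a)/2$, hence $g_n(u) \leq \exp\!\left(-\pi(u-a)F(a)/2\right)$; for $u \in [0,a]$ one simply has $g_n(u) \leq 1$. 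Thus, uniformly in $n \geq N$, the $g_n$ are dominated by the integrable function $h(u) := \mathds{1}_{[0,a]}(u) + \mathds{1}_{(a,\infty)}(u)\exp\!\left(-\pi(u-a)F(a)/2\right)$ (the same bound with $F$ in place of $F_n$ also shows $m_F < \infty$). Applying the dominated convergence theorem to $(g_n)_{n \geq N}$ gives $\int_0^\infty g_n(u)\,\mathrm{d}u \to \int_0^\infty g(u)\,\mathrm{d}u$, i.e. $m_{F_n} \to m_F$; discarding the first $N-1$ terms does not affect the limit.

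The only real obstacle is the tail control: pointwise convergence of the $g_n$ alone says nothing about their behaviour as $u \to \infty$, and indeed without the non-zero assumption the statement fails (if $F \equiv 0$ then $m_F = \infty$, and the $m_{F_n}$ need not blow up). Once the non-zero assumption is exploited to extract a uniform exponential decay rate for the tails of the $g_n$, the remainder is a routine dominated convergence argument.
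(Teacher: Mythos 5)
Your proof is correct and follows essentially the same approach as the paper: both identify $m_{F_n}$ and $m_F$ as integrals of $\exp(-\pi\int_0^u F_n)$ and $\exp(-\pi\int_0^u F)$, prove pointwise convergence of these integrands, and exploit $F(a)>0$ to extract a uniform integrable exponential tail bound $\exp(-\pi(u-a)F(a)/2)$ for $n\geq N$, then invoke dominated convergence. Your handling of the first $N-1$ terms (simply discarding them) is a slight streamlining of the paper's preliminary subsequence argument (which passes to a subsequence avoiding the zero function and folds $\max_{k\leq N}p_k$ into the dominating function), but the substance is the same.
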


Next, we show that $\tilde{F}_n^V$ and $\hat{F}_n^V$ are strongly consistent and uniformly strongly consistent estimators of $F^V$ respectively.

\begin{lemma}\label{FV_estimators_consistency_lemma}
    With probability one, $\lim_{n \to \infty} \tilde{F}_n^V(z) = F^V(z)$ for all $z \geq 0$. Additionally, with probability one we have $\lim_{n \to \infty} \Vert \hat{F}_n^V - F^V \Vert_{\infty} = 0$. Here, $\tilde{F}_n^V$ and $\hat{F}_n^V$ are given by (\ref{Fv_estimator_def1}) and (\ref{Fv_estimator_def2}) respectively.
\end{lemma}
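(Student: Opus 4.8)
The plan is to prove the first statement via the spatial ergodic theorem, exactly as in the proof of Corollary \ref{corr_consistency_Gn}, and then derive the second statement from the first. For the first statement, I would write $\nu_d(C((x,h),\eta))$ as a function of $(S_x\eta, h)$: by translation invariance of Lebesgue measure, $\nu_d(C((x,h),\eta)) = \nu_d(C((0,h), S_x\eta))$, so setting $g(\psi,h) := \mathds{1}_{(0,z]}(h)\,\nu_d(C((0,h),\psi))$ we have $\tilde{F}_n^V(z) = \nu_d(W_n)^{-1}\sum_{(x,h)\in\eta}\mathds{1}_{W_n}(x) g(S_x\eta,h)$. Applying the spatial ergodic theorem gives the almost sure limit $\int_0^\infty \EE(g(\eta+\delta_{(0,h)},h))\,\mathbb{F}(\mathrm{d}h) = \int_0^z \EE(\nu_d(C((0,h),\eta+\delta_{(0,h)})))\,\mathrm{d}F(h)$, and by the Mecke-equation computation underlying (\ref{volume_weighted_proof_measure_eq}) this equals $F^V(z)$. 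Since $F^V$ is monotone and $\tilde F_n^V$ is a random sequence of monotone functions, Lemma \ref{monotone_estimator_lemma} upgrades pointwise a.s.\ convergence to a.s.\ convergence at all $z\geq 0$ simultaneously. One subtlety is that the ergodic theorem requires a fixed $z$ inside $g$, so one first fixes $z$, applies the theorem, and only then invokes Lemma \ref{monotone_estimator_lemma}; a second subtlety is measurability and finiteness of $g$, which follows from the fact that $L(\eta)$ is a.s.\ a tessellation (so cells are a.s.\ bounded) together with the finiteness of $F^V$ established in Theorem \ref{weighted_height_dist_thm}.

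For the second statement, note that the denominator of $\hat F_n^V(z)$ in (\ref{Fv_estimator_def2}) equals $\tilde F_n^V(\infty) := \lim_{z\to\infty}\tilde F_n^V(z)$, i.e.\ the total volume of observed cells divided by $\nu_d(W_n)$, so $\hat F_n^V(z) = \tilde F_n^V(z)/\tilde F_n^V(\infty)$. From the first statement, on a probability-one event $\tilde F_n^V(z)\to F^V(z)$ for every rational $z$ and, taking $z\to\infty$ along rationals together with monotonicity, $\tilde F_n^V(\infty)\to 1$ (using $\lim_{z\to\infty}F^V(z)=1$ from Theorem \ref{weighted_height_dist_thm}). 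Hence $\hat F_n^V \to F^V$ pointwise a.s., and since both $\hat F_n^V$ and $F^V$ are proper distribution functions on $(0,\infty)$ with $F^V$ continuous in $z$ where $\mathbb{F}$ has no atoms — more robustly, since $\hat F_n^V$ are CDFs converging pointwise on a dense set to the CDF $F^V$ — a Pólya-type / Glivenko–Cantelli argument gives $\Vert \hat F_n^V - F^V\Vert_\infty \to 0$ a.s. Concretely, I would fix $\varepsilon>0$, choose a finite grid $0=t_0<t_1<\dots<t_M$ with $F^V(t_{j})-F^V(t_{j-1})<\varepsilon$ and $F^V(t_M)>1-\varepsilon$, arrange the grid points to avoid atoms of $\mathbb{F}$ (possible since there are countably many), and sandwich $\hat F_n^V(z)$ between $\hat F_n^V(t_{j-1})$ and $\hat F_n^V(t_j)$ on each $[t_{j-1},t_j]$, then let $n\to\infty$.

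The main obstacle I anticipate is the rigorous justification that $g$ is a legitimate input to the spatial ergodic theorem — namely that $\psi\mapsto \nu_d(C((0,h),\psi))$ is measurable on $\mathbf N(\mathbb X)$ and that the limiting integral $\int_0^\infty \EE(\nu_d(C((0,h),\eta+\delta_{(0,h)})))\,\mathrm{d}F(h)$ is finite; finiteness is where Theorem \ref{weighted_height_dist_thm}'s computation ($\mathbb F^V$ is a probability measure, total expected volume per unit window equals $1$) is essential, since without it the ergodic theorem would only yield convergence to $+\infty$. The transfer from the first to the second statement is comparatively routine, the only genuinely delicate point being the uniformity of convergence near $z=\infty$, handled by the grid argument above together with $F^V(\infty)=1$.
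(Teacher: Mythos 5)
Your plan follows the paper's proof quite closely: spatial ergodic theorem for pointwise a.s.\ convergence of $\tilde F_n^V$, Lemma~\ref{monotone_estimator_lemma} to get ``for all $z\geq 0$ simultaneously'', continuous mapping theorem for $\hat F_n^V = \tilde F_n^V / \tilde F_n^V(\infty)$, and a Glivenko--Cantelli argument for the uniform statement. The Mecke-based identification of the limit with $F^V(z)$ is correct, and the measurability/finiteness remarks are reasonable (finiteness is indeed anchored by Theorem~\ref{weighted_height_dist_thm}).

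However, there is a genuine gap in your derivation of $\tilde F_n^V(\infty)\to 1$. You argue this ``by taking $z\to\infty$ along rationals together with monotonicity'' from the pointwise convergence $\tilde F_n^V(z)\to F^V(z)$ and $F^V(\infty)=1$. That inference is false for monotone functions: pointwise convergence at every finite $z$ together with $\lim_z F^V(z)=1$ only yields $\liminf_n \tilde F_n^V(\infty)\geq 1$; mass can escape to infinity. A concrete counterexample: take $\tilde F_n^V(z) := F^V(z) + \mathds{1}\{z>n\}$, which is in $\mathcal F_+$, converges pointwise to $F^V$ at every fixed $z$, yet has $\tilde F_n^V(\infty)\equiv 2$. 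Here $\tilde F_n^V(\infty)$ is the total volume of observed cells divided by $\nu_d(W_n)$, which is unbounded and a priori can exceed $1$, so there is no trivial cap. The correct route — and the one the paper uses, in the sentence ``Similarly, we may argue that $\lim_{n\to\infty}\tilde F_n^V(\infty)=1$ almost surely'' — is to apply the spatial ergodic theorem a second time with $g(\psi,h)=\nu_d(C((0,h),\psi))$ (dropping the indicator $\mathds 1_{(0,z]}(h)$), obtaining the a.s.\ limit $\int_0^\infty \EE(\nu_d(C((0,h),\eta+\delta_{(0,h)})))\,\mathrm dF(h)=\mathbb F^V((0,\infty))=1$, the last equality being exactly the $\mathbb F^V(\RR)=1$ computation in the proof of Theorem~\ref{weighted_height_dist_thm}.

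A secondary, minor remark: your grid argument for uniformity asks for $F^V(t_j)-F^V(t_{j-1})<\varepsilon$ while also avoiding atoms of $\mathbb F$. If $\mathbb F^V$ (equivalently $\mathbb F$, since the two are mutually absolutely continuous) has an atom of mass at least $\varepsilon$, such a grid does not exist. The standard fix is the Glivenko--Cantelli version that requires $F^V(t_j^-)-F^V(t_{j-1})<\varepsilon$ and uses convergence of both $\hat F_n^V(t_j)$ and $\hat F_n^V(t_j^-)$; this is what the paper implicitly refers to, and your pointwise convergence result already gives both limits since it holds for all $z\geq 0$.
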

\begin{proof}
    We first show that with probability one, $\lim_{n \to \infty} \tilde{F}_n^V(z) = F^V(z)$ for all $z \geq 0$. Let $z \geq 0$, by Lemma \ref{monotone_estimator_lemma} it is sufficient to show that $\lim_{n \to \infty} \tilde{F}_n^V(z) = F^V(z)$ almost surely. Again, we apply the spatial ergodic theorem. This can be done since for all $(x,h) \in \eta$ we have: $C((x,h),\eta) - x = C((0,h),S_x\eta)$. Hence, by the translation invariance of Lebesgue measure, $\tilde{F}_n^V(z)$ may be written as:
    \[\tilde{F}_n^V(z) = \frac{1}{\nu_d(W_n)}\sum_{(x,h) \in \eta} \mathds{1}_{W_n}(x)\mathds{1}_{(0,z]}(h)\nu_d\left(C((0,h),S_x\eta)\right).\]
    So indeed, the spatial ergodic theorem yields $\lim_{n \to \infty} \tilde{F}_n^V(z) = F^V(z)$ almost surely. Similarly, we may argue that $\lim_{n \to \infty} \tilde{F}_n^V(\infty) = 1$ almost surely. Since $\hat{F}_n^V(z) = \tilde{F}_n^V(z)/\tilde{F}_n^V(\infty)$, we obtain via the continuous mapping theorem that $\lim_{n \to \infty} \hat{F}_n^V(z) = F^V(z)$ almost surely. The uniform strong consistency follows from repeating the steps in the proof of the Glivenko-Cantelli theorem.
\end{proof}

We need one more lemma before we prove the consistency result for $\hat{F}_n$.

\begin{lemma}\label{FV_continuity_lemma}
    Let $(F_n)_{n \geq 1}$ be a sequence in $\mathcal{F}_{+}$, and let $F \in \mathcal{F}_{+}$. Let $(m_n)_{n\geq 1}$ be a sequence in $(0,\infty)$ and let $m > 0$. If $\lim_{n \to \infty} F_n(z) = F(z)$ for all $z\geq 0$ and $\lim_{n \to \infty} m_n = m$, then $\lim_{n \to \infty}V(z;F_n,m_n) = V(z;F,m)$ for all $z \geq 0$.
\end{lemma}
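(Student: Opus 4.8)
The plan is to reduce the claim to the convergence of each of the finitely many building blocks of $V(z;F_n,m_n)$, the only analytic input being the dominated convergence theorem applied twice.

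Fix $z \geq 0$. First I would record a uniform bound on the $F_n$: since each $F_n$ is non-decreasing, $F_n(t) \leq F_n(z)$ for every $t \in [0,z]$, and because $F_n(z) \to F(z)$ with $F(z) < \infty$ (as $\mathbb{F}$ is locally finite), the sequence $(F_n(z))_{n\geq 1}$ is bounded, say by a constant $C_z$. Hence $F_n \leq C_z$ on $[0,z]$ uniformly in $n$, while $F_n \to F$ pointwise there, so the dominated convergence theorem gives $\int_0^z F_n(t)\,\mathrm{d}t \to \int_0^z F(t)\,\mathrm{d}t$, and by continuity of the exponential, $\exp(-\pi \int_0^z F_n(t)\,\mathrm{d}t) \to \exp(-\pi \int_0^z F(t)\,\mathrm{d}t)$. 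The same reasoning applies with $z$ replaced by any $u \in [0,z]$, so the map $u \mapsto \exp(-\pi \int_0^u F_n(t)\,\mathrm{d}t)$ converges pointwise on $[0,z]$ to $u \mapsto \exp(-\pi \int_0^u F(t)\,\mathrm{d}t)$; since this integrand is bounded by $1$ (because $F_n \geq 0$), a second application of dominated convergence yields $\int_0^z \exp(-\pi \int_0^u F_n(t)\,\mathrm{d}t)\,\mathrm{d}u \to \int_0^z \exp(-\pi \int_0^u F(t)\,\mathrm{d}t)\,\mathrm{d}u$.

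Finally I would assemble the pieces: $F_n(z) \to F(z)$ and $m_n \to m$ by hypothesis, together with the two convergences established above; since all the limits involved are finite, taking products and sums shows $V(z;F_n,m_n) \to V(z;F,m)$, which is exactly the assertion.

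I do not expect a genuine obstacle here. The only step that requires a little care is securing a uniform-in-$n$ dominating function for the $F_n$ on $[0,z]$, and this is provided for free by monotonicity together with the convergence $F_n(z) \to F(z) < \infty$ — the same mechanism that underlies the Helly-selection argument used elsewhere in the paper, so it stays within the established toolkit.
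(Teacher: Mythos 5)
Your proof is correct. The structure is slightly different from the paper's: you take limits term-by-term in the explicit formula for $V(z;F_n,m_n)$ using dominated convergence twice, with the key observation that $F_n$ is uniformly bounded by $\sup_n F_n(z)<\infty$ on $[0,z]$ supplying the dominating function at the inner level. The paper instead bounds the difference $|V(z;F_n,m_n) - V(z;F,m)|$ by a triangle-inequality decomposition into three pieces, controls each using the Lipschitz inequality $|e^{-x}-e^{-y}|\le|x-y|$ for $x,y\ge 0$, and leans on intermediate estimates already established in the proof of Lemma~\ref{mf_continuity_lemma} (in particular the bound $|p_n(u)-p(u)|\le\pi\,\bigl|\int_0^u(u-t)\,\mathrm{d}(F-F_n)(t)\bigr|$ and the argument that the first term of (\ref{mf_convergence_eq}) vanishes). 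What your route buys is self-containment: you do not need to invoke those prior computations or the Stieltjes integration-by-parts rewriting, and the uniform bound you isolate is precisely what makes the dominated convergence legitimate. What the paper's route buys is compression in context, since those estimates on $p_n$ are needed anyway for Lemma~\ref{mf_continuity_lemma} and can be recycled. Both are valid; there is no gap in yours.
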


\begin{theorem}[Consistency of $\hat{F}_n$]
    With probability one, $\lim_{n \to \infty}\hat{F}_n(z) = F(z)$ for all $z \geq 0$.
\end{theorem}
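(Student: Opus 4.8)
The plan is to mimic exactly the structure of the consistency proof for $\hat{F}_n^0$, replacing the role of the operator $G_F$ and its estimator $\hat{G}_n$ with the pair $(m_F, V(\blank;F,m_F))$ and its estimator $(\hat{m}_n, \hat{F}_n^V)$. So first I would fix a probability space supporting $\eta$ and, using Lemma \ref{FV_estimators_consistency_lemma} (consistency of $\hat{F}_n^V$), the consistency of $\hat{F}_n^0$ already established, and Lemma \ref{mf_continuity_lemma} (which turns pointwise convergence of $\hat{F}_n^0$ to $F$ into convergence $\hat{m}_n = m_{\hat{F}_n^0} \to m_F$), pass to a single event $\Omega_0$ of probability one on which simultaneously $\hat{F}_n^V(z) \to F^V(z)$ for all $z \geq 0$ and $\hat{m}_n \to m_F$. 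All subsequent arguments are carried out pointwise on $\Omega_0$.

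Next, fix $\omega \in \Omega_0$ and $z \geq 0$, and run the Helly/subsequence argument. Pick $M > F(z)$, truncate $\bar F_n := \min\{\hat{F}_n(\blank;\omega), M\}$ to get a uniformly bounded sequence of monotone functions, and from an arbitrary subsequence extract via Helly a further subsequence $\bar F_{n_k}$ converging pointwise to a monotone limit $\bar F$; then $\hat{F}_{n_k}(h;\omega) \to \bar F(h)$ for all $h \in [0,R)$ with $R := \sup\{h \geq 0 : \bar F(h) < M\}$. Applying Lemma \ref{FV_continuity_lemma} (continuity of $V$ in both arguments) along this subsequence, using $\hat{m}_{n_k} \to m_F$, gives $V(h;\hat{F}_{n_k}(\blank;\omega),\hat{m}_{n_k}) \to V(h; \bar F, m_F)$ for $h \in [0,R)$. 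But the left-hand side equals $\hat{F}_{n_k}^V(h;\omega)$ by the defining property of $\hat{F}_n$ (Definition \ref{inverse_estimator_F_def}), and $\hat{F}_{n_k}^V(h;\omega) \to F^V(h) = V(h;F,m_F)$. Hence $V(h;\bar F, m_F) = V(h;F,m_F)$ for all $h \in [0,R)$; since also $m_{\bar F} = m_F$ (this is forced — see below), Theorem \ref{FV_identifiability_theorem} gives $\bar F(h) = F(h)$ on $[0,R)$, in particular $\bar F(z) = F(z) < M$, so $\hat{F}_{n_k}(z;\omega) = \bar F_{n_k}(z) \to F(z)$. Since the subsequence was arbitrary, the whole sequence converges, which is the claim.

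The main obstacle I anticipate is matching the hypotheses of Theorem \ref{FV_identifiability_theorem}, which requires $m_{F_1} = m_{F_2}$, to the situation at hand: the limit $\bar F$ obtained from Helly's principle is only known to satisfy $V(h;\bar F, m_F) = V(h;F,m_F)$ on $[0,R)$, and one must separately argue $m_{\bar F} = m_F$. The cleanest route is the one already used in the ``well-defined'' discussion following Definition \ref{inverse_estimator_F_def}: for any non-zero $\Phi \in \mathcal{F}_{+}$ and any $m > 0$, the relation $V(z;\Phi,m) = W(z)$ for all $z$ forces $m_\Phi = m$ whenever $W(\infty) = 1$ and $\Phi(\infty) > 0$, by rearranging the identity and taking $z \to \infty$. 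Here $\bar F$ is non-zero (since $F$ is non-zero and $\bar F = F$ on a neighbourhood of any point where $F > 0$, which exists because $\mathbb{F}$ is a non-zero measure on $(0,\infty)$), so $m_{\bar F} = m_F$ follows. A secondary technical point is that $R$ could in principle be finite if $\bar F$ saturates at $M$; but this cannot happen because $M > F(z)$ and, by the identification just argued on whatever interval $[0,R)$ we do have, $\bar F$ agrees with the unbounded-or-large function $F$ there, forcing $R$ to exceed $z$ — exactly as in the proof for $\hat{F}_n^0$. Beyond these bookkeeping issues the argument is a routine transcription, since Lemmas \ref{mf_continuity_lemma}, \ref{FV_estimators_consistency_lemma}, \ref{FV_continuity_lemma} and Theorem \ref{FV_identifiability_theorem} supply precisely the ergodic, continuity and identifiability inputs needed.
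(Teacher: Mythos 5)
Your overall strategy matches the paper's: lift the Helly/subsequence scaffold from the $\hat{F}_n^0$ proof verbatim, substituting Lemmas \ref{mf_continuity_lemma}, \ref{FV_estimators_consistency_lemma}, \ref{FV_continuity_lemma} and Theorem \ref{FV_identifiability_theorem} for their $G$-analogues. Up through the identity $V(h;F,m_F)=V(h;\bar F,m_F)$ for $h\in[0,R)$ you are transcribing the paper's argument exactly. You are also right to flag the hypothesis match for Theorem \ref{FV_identifiability_theorem} as the genuine pressure point: the theorem is stated in terms of $(F_1,m_{F_1})$ and $(F_2,m_{F_2})$, so to feed it $V(\blank;F,m_F)=V(\blank;\bar F,m_F)$ one does need to know $m_{\bar F}=m_F$ (or, equivalently, re-derive the Gr\"onwall bound with a common $m$ in both Radon--Nikodym densities, which kills the $|m_{F_1}-m_{F_2}|$ term but introduces a positivity requirement on $\pi(m_F-\int_0^z\exp(-\pi\int_0^u\bar F)\,\mathrm{d}u)$). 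The paper applies the theorem directly at this point without commenting on this, so spotting it is a real contribution.

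The problem is that your proposed repair does not close the hole. Your argument that $m_{\bar F}=m_F$ by ``rearranging the identity and taking $z\to\infty$'' uses the relation $V(z;\bar F,m_F)=V(z;F,m_F)$ on all of $[0,\infty)$, but you only have it on $[0,R)$, and $R$ can be finite (nothing a priori prevents the Helly limit $\bar F$ from saturating at $M$ before $z$, since $F$ need not be bounded by $M$). Moreover your justification that $\bar F$ is non-zero (``$\bar F = F$ on a neighbourhood of any point where $F>0$'') already presupposes $\bar F = F$, which is the thing being proved; at that stage of the argument $\bar F$ is merely an arbitrary monotone subsequential limit of the truncations, and there is no independent reason it should agree with $F$ anywhere. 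So you have correctly located the obstruction but circled back through the conclusion to resolve it. A viable repair would instead work at the level of the Gr\"onwall inequality: replace $p_2(z)=\pi\int_z^\infty\exp(-\pi\int_0^uF_2)\,\mathrm{d}u$ by $q(z)=\pi\bigl(m_F-\int_0^z\exp(-\pi\int_0^u\bar F)\,\mathrm{d}u\bigr)$ (so the $m$-difference term cancels identically), and bootstrap the positivity of $q$: on the maximal initial interval where $q>0$ one gets $F=\bar F$, which forces $q=p_1>0$ at the endpoint, so the interval cannot be a proper sub-interval of $[0,R)$. That sort of argument, or a direct proof that $z<R$ and $m_{\bar F}=m_F$, is what is actually missing, both from your write-up and from the step ``Theorem \ref{FV_identifiability_theorem} now yields\dots'' as written in the paper.
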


\begin{proof}
Let $(\Omega, \mathcal{A},\PP)$ be a probability space supporting a Poisson process $\eta$, with intensity measure $\nu_2 \times \mathbb{F}$. By Lemma \ref{mf_continuity_lemma} and Lemma \ref{FV_estimators_consistency_lemma} there exists a set $\Omega_0 \in \mathcal{A}$ with $\PP(\Omega_0)=1$ such that for all $\omega \in \Omega_0$ and $z \geq 0$ we have $\lim_{n \to \infty}\hat{F}_n^V(z;\omega) = V(z;F,m_F)$ and $\lim_{n \to \infty} \hat{m}_n(\omega) = m_F$. Let $z \geq 0$, we show that $\lim_{n \to \infty}\hat{F}_n(z;\omega) = F(z)$.

Pick $M > 0$ such that $F(z) < M$. For $n \in \NN$ and $h \geq 0$, define: $\bar{F}_n(h) = \min\{\hat{F}_n(h;\omega), M\}$. Then, $(\bar{F}_n)_{n \geq 1}$ is a uniformly bounded sequence of monotone functions. Let $(n_l)_{l \geq 1} \subset (n)_{n\geq 1}$ be an arbitrary subsequence. By Helly's selection principle there exists a further subsequence $(n_k)_{k\geq 1} \subset (n_l)_{l \geq 1}$ such that $\bar{F}_{n_k}$ converges pointwise to some monotone function $\bar{F}$ as $k \to \infty$. This implies that $\lim_{k \to \infty} \hat{F}_{n_k}(h;\omega) = \lim_{k \to \infty}\bar{F}_{n_k}(h) = \bar{F}(h)$ for all $h \in [0,R)$ with $R:= \sup\{h\geq 0: \bar{F}(h) < M\}$. By Lemma \ref{FV_continuity_lemma} we obtain along this subsequence: 
\[\lim_{k \to \infty}\hat{F}_{n_k}^V(h) := \lim_{k \to \infty}V(h; \hat{F}_{n_k}(\blank,\omega), \hat{m}_{n_k}(\omega)) = V(h;\bar{F},m_F) \text{ \ for all } h \in [0,R).\]
Because the whole sequence $\hat{F}_n^V(h;\omega)$ converges to $V(h;F,m_F)$ as $n \to \infty$, for $h \geq 0$, we obtain $V(h;F,m_F) = V(h;\bar{F},m_F)$ for all $h \in [0,R)$. Theorem \ref{FV_identifiability_theorem} now yields $F(h) = \bar{F}(h)$ for all $h \in [0,R)$, since $z \in [0,R)$, we have in particular $F(z) = \bar{F}(z)$. As a consequence: $\lim_{k \to \infty}\hat{F}_{n_k}(z;\omega) = F(z)$. Because the initial subsequence was chosen arbitrarily, the whole sequence converges: $\lim_{n \to \infty}\allowbreak \hat{F}_n(z;\omega) = F(z)$.
\end{proof}

\section{Stereology}\label{section_stereology}
In this section we study a special type of Poisson-Laguerre tessellations, namely sectional Poisson-Laguerre tessellations. By this we mean that we intersect a Poisson-Laguerre tessellation with a hyperplane, and we consider the resulting tessellation in this hyperplane. In Theorem 4.1. in \cite{Gusakova2024_2} it was shown that that intersecting a Poisson-Laguerre tessellation in $\RR^d$ with a hyperplane, yields a tessellation in this hyperplane which is again a Poisson-Laguerre tessellation. Because our parameterization is subtly different to the setting in \cite{Gusakova2024_2} we derive the intensity measure of the Poisson process corresponding to the sectional Poisson-Laguerre tessellation, for which we also use a different argument. 

Suppose we observe the extreme points and the corresponding cells, of a Poisson-Laguerre tessellation $L(\eta)$ in $\RR^{d-1}$, through the observation window $W_n$. The underlying Poisson process $\eta$ has intensity measure $\nu_{d-1} \times \mathbb{F}$, where $\mathbb{F}$ is a locally finite measure concentrated on $(0,\infty)$. Hence, we may use any of the estimators in the previous two sections to estimate $F(z) = \mathbb{F}((0,z])$. Throughout this section, we assume that $\bar{F}_n$ is a piecewise constant, strongly consistent estimator for $F$. Now, this Poisson-Laguerre tessellation in $\RR^{d-1}$ is the sectional tessellation corresponding to a Poisson-Laguerre tessellation $L(\Psi)$ in $\RR^d$. The Poisson process $\Psi$ of this higher-dimensional tessellation has intensity measure $\nu_d \times \mathbb{H}$, where $\mathbb{H}$ is a locally finite measure concentrated on $(0,\infty)$. For $z \geq 0$ define: $H(z) := \mathbb{H}((0,z])$. In this section, we show how $F$ is related to $H$, and how a consistent estimator for $F$ can be used to obtain a (locally) consistent estimator for $H$. Thereby, we have a solution to the stereological problem. First, we need the following lemma for obtaining an expression for $F$ in this stereological setting:

\begin{lemma}\label{laguerre_sectional_lemma}
    Let $\varphi \subset \RR^d \times (0,\infty)$ be an at most countable set. Let $\theta \in \mathbb{S}^{d-1}$ and $s \in \RR$. Define the hyperplane $T := \{y \in \RR^d: \langle \theta, y \rangle = s\}$. For $(x,h) \in \varphi$, with $x \in \RR^d$ and $h > 0$ let:
    \begin{align*}
        x' &:= x - (\langle \theta, x \rangle - s)\theta\\
        h' &:= h + \Vert x' - x \Vert^2 =  h + (\langle \theta, x\rangle - s)^2.
    \end{align*}
    Note that $x' \in T$ and define $\varphi' := \{(x',h'):(x,h) \in \varphi\}$. Then, for all $(x,h) \in \varphi$: 
    $C((x,h),\varphi) \cap T = C'((x',h'),\varphi')$ with:
    \[C'((x',h'), \varphi') = \{y \in T: \Vert y-x' \Vert^2 + h' \leq \Vert y-\bar{x} \Vert^2 + \bar{h} \text{ for all } (\bar{x},\bar{h}) \in \varphi')\}. \]
\end{lemma}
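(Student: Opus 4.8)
The plan is to prove the set identity by reducing it to a pointwise equivalence. Both sides are subsets of $T$ — the left side because of the intersection, the right side by the definition of $C'$ — so it suffices to show that for each fixed $(x,h)\in\varphi$ and each $y\in T$ one has $y\in C((x,h),\varphi)$ if and only if $y\in C'((x',h'),\varphi')$.

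First I would isolate the elementary geometric identity that drives everything. For any $(a,c)\in\varphi$ with projection $a'=a-(\langle\theta,a\rangle-s)\theta$, one checks (using $\Vert\theta\Vert=1$) that $\langle\theta,a'\rangle=s$, so $a'\in T$ and $a-a'=(\langle\theta,a\rangle-s)\theta$ is a scalar multiple of $\theta$, hence orthogonal to the linear space $\theta^{\perp}$. For $y\in T$ we have $\langle\theta,y-a'\rangle=s-s=0$, so $y-a'\in\theta^{\perp}$ is orthogonal to $a'-a$. The Pythagorean theorem then gives
\[
\Vert y-a\Vert^{2}=\Vert y-a'\Vert^{2}+\Vert a'-a\Vert^{2},
\]
and adding $c$ and using $c'=c+\Vert a'-a\Vert^{2}$ yields
\[
\Vert y-a\Vert^{2}+c=\Vert y-a'\Vert^{2}+c' \qquad\text{for all } y\in T.
\]

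The main step is then just substitution. Fix $(x,h)\in\varphi$ and $y\in T$, and apply the displayed identity both to $(x,h)$ and to an arbitrary $(\bar x,\bar h)\in\varphi$; this turns the defining inequality $\Vert y-x\Vert^{2}+h\le\Vert y-\bar x\Vert^{2}+\bar h$ into the equivalent inequality $\Vert y-x'\Vert^{2}+h'\le\Vert y-\bar x'\Vert^{2}+\bar h'$. Taking the conjunction over all $(\bar x,\bar h)\in\varphi$ — which is the same as the conjunction over all $(\bar x',\bar h')\in\varphi'$, since every element of $\varphi'$ arises from some element of $\varphi$ — shows $y\in C((x,h),\varphi)$ iff $y\in C'((x',h'),\varphi')$, and intersecting the former with $\{y\in T\}$ gives the claim.

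I do not anticipate a real obstacle here; the only points needing a little care are verifying that $x'$ is genuinely the orthogonal projection of $x$ onto $T$ (where $\Vert\theta\Vert=1$ is used) and noting that the map $(\bar x,\bar h)\mapsto(\bar x',\bar h')$ need not be injective, so one should phrase the universal quantifier over $\varphi$ rather than over $\varphi'$ (or simply remark that the conjunction is unaffected). Countability of $\varphi$ plays no role beyond ensuring the objects are well defined.
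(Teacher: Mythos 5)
Your proof is correct and follows essentially the same path as the paper's: establish the identity $\Vert y-x\Vert^{2}+h=\Vert y-x'\Vert^{2}+h'$ for $y\in T$ (the paper does this by expanding the square directly, you by the Pythagorean theorem; these are the same computation) and then substitute it into the defining inequalities of the two cells. Your additional remark about non-injectivity of $(\bar x,\bar h)\mapsto(\bar x',\bar h')$ is a sensible precaution that the paper leaves implicit.
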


\begin{proof}
    Let $y \in T$ and $(x,h) \in \varphi$, then a direct computation yields:
\[||x' - y||^2 = ||x - y||^2 - 2(\langle \theta, x\rangle - s)\langle x - y, \theta\rangle + (\langle \theta, x\rangle - s)^2 = ||x - y||^2 - h' + h.\]   
Since $||x' - y||^2 + h'  = ||x - y||^2 + h$, the claim follows.
\end{proof}

This lemma describes the set of weighted points which generates a sectional Laguerre diagram. We now apply this to the Poisson-Laguerre tessellation generated by the Poisson process $\Psi$. Because a Poisson-Laguerre tessellation is stationary and isotropic the choice of hyperplane does not affect the distribution of the sectional tessellation. For $x \in \RR^d$ write: $x = (\mathrm{x}_1, \mathrm{x}_2, \dots, \mathrm{x}_d)$. We choose the hyperplane $\mathrm{x}_{d} = 0$ which corresponds to taking $\theta = (0,\dots,0,1) \in  \mathbb{S}^{d-1}$ and $s=0$ in Lemma \ref{laguerre_sectional_lemma}. In view of Lemma \ref{laguerre_sectional_lemma} consider the function which maps a pair $(x,h) \in \Psi$ to the corresponding $(x', h')$. Hence, this function is given by $(\mathrm{x}_1,\dots,\mathrm{x}_d,h) \mapsto (\mathrm{x}_1,\dots,\mathrm{x}_{d-1},0,h+\mathrm{x}_d^2)$. Naturally, the $d$-th component of the resulting vector is always zero. We identify the hyperplane $\mathrm{x}_{d} = 0$ with $\RR^{d-1}$ and therefore we consider the function $\tau:\RR^d \times (0,\infty) \to \RR^{d-1} \times (0,\infty)$ which is defined via: $\tau(x,h) = (\mathrm{x}_1,\dots,\mathrm{x}_{d-1}, h + \mathrm{x}_{d}^2)$. Hence, the point process $\eta := \tau(\Psi)$ generates the sectional tessellation. By the mapping theorem (see Theorem 5.1 in \cite{Last2018}) $\eta$ is again a Poisson process on $\RR^{d-1} \times (0,\infty)$ with intensity measure: $\EE(\Psi(\tau^{-1}(\blank)))$. Let $B \subset \RR^{d-1}$ be a Borel set and let $z \geq 0$. Note that $h+\mathrm{x}_d^2 \leq z$ if and only if $h \leq z$ and $\mathrm{x}_d \in [-\sqrt{z-h},\sqrt{z-h}]$. As a result:
\begin{equation}
    \tau(x,h) \in B \times(0,z] \iff x \in B \times \left[-\sqrt{z-h},\sqrt{z-h}\right] \text{ and } h \leq z.\label{sectional_derivation_eq}
\end{equation}
Via the Campbell formula and (\ref{sectional_derivation_eq}) we find:
\begin{align*}
    \EE\left(\Psi(\tau^{-1}(B\times(0,z]))\right) &= \int_{\RR^d}\int_0^\infty \mathds{1}{\left\{\tau(x,h) \in B \times (0,z]\right\}}\mathrm{d}H(h)\mathrm{d}x \\
    &= \int_{\RR^d}\int_0^z \mathds{1}{\left\{x \in B \times \left[-\sqrt{z-h},\sqrt{z-h}\right]\right\}}\mathrm{d}H(h)\mathrm{d}x \\
    &= \nu_{d-1}(B)2\int_0^z \sqrt{z-h}\mathrm{d}H(h). 
\end{align*}
Hence, we obtain:
\[F(z) = 2\int_0^z \sqrt{z-h}\mathrm{d}H(h).\]
Let us discuss some properties of this function $F$. First of all, $F$ is not a bounded function. Indeed, choose $z_0 > 0$ such that $H(z_0) > 0$, and let $z > z_0$, via integration by parts we observe:
\begin{equation}
F(z) = \int_0^z H(h)\frac{1}{\sqrt{z-h}}\mathrm{d}h \geq \int_{z_0}^z H(h)\frac{1}{\sqrt{z}}\mathrm{d}h \geq  H(z_0)\frac{z - z_0}{\sqrt{z}}.\label{F_unbounded_eq}
\end{equation}
It immediately follows that $\lim_{z \to \infty}F(z) = \infty$. Another property of $F$ is that it is absolutely continuous, and has a Lebesgue density $f$ given by:
\[f(z) = \int_0^z \frac{1}{\sqrt{z-t}}\mathrm{d}H(t).\]

It is possible to express $H$ in terms of $F$, because this is an Abel integral equation. For a direct derivation of the inversion formula see for example \cite{Srivastav1963}. Here, we simply show that the following expression is indeed an inversion formula for $H(z)$:
\begin{align}
    \frac{1}{\pi}\int_0^z \frac{1}{\sqrt{z-t}}\mathrm{d}F(t) &= \frac{1}{\pi} \int_0^z\frac{1}{\sqrt{z-t}}\int_0^t \frac{1}{\sqrt{t-s}}\mathrm{d}H(s)\mathrm{d}t \nonumber \\
    &= \int_0^z \int_s^z \frac{1}{\sqrt{z-t}\sqrt{t-s}}\mathrm{d}t\mathrm{d}H(s) \nonumber\\
    &= \int_0^z \int_0^1 \frac{1}{\pi}(1-u)^{-\frac{1}{2}}u^{-\frac{1}{2}}\mathrm{d}u\mathrm{d}H(s) \label{stereology_inversion_eq1}\\
    &= H(z) \label{stereology_inversion_eq2}.
\end{align}
In (\ref{stereology_inversion_eq1}) we substituted $u = (t-s)/(z-s)$. Finally, (\ref{stereology_inversion_eq2}) follows from the fact that the inner integral in (\ref{stereology_inversion_eq1}) is equal to one, since this integral represents the Beta function evaluated in $(1/2,1/2)$. A plugin estimator for $H(z)$ is therefore given by:
\begin{equation}
    H_n(z) := \frac{1}{\pi}\int_0^z \frac{1}{\sqrt{z-t}}\mathrm{d}\bar{F}_n(t),
\end{equation}
where $\bar{F}_n$ is a piecewise constant, strongly consistent estimator for $F$. This estimator is however rather ill-behaved. While $H$ is a monotone function, $H_n$ is not. Because $\bar{F}_n$ is piecewise constant, $H_n$ is decreasing between jump locations of $\bar{F}_n$. Moreover, if $z_0$ is a jump location of $\bar{F}_n$, then $\lim_{z\downarrow z_0}H_n(z) = \infty$. Therefore, we use isotonization to obtain an estimator for $H$ which is monotone, and show that it is consistent. We note that our estimator is similarly defined as the isotonic estimator in \cite{Groeneboom1995}. For the remainder of this section, let $k = k(n)$ be the number of jump locations of $\bar{F}_n$. Let $h_1,h_2,\dots,h_{k}$ with $0 < h_1 < h_2 <\dots< h_{k} < \infty$ be the jump locations of $\bar{F}_n$. In order to introduce the isotonic estimator we define for $z \geq 0$:
\begin{equation}
    U_n(z) := \int_0^z H_n(t)\mathrm{d}t = \frac{2}{\pi}\int_0^z \sqrt{z-t}\mathrm{d}\bar{F}_n(t). \label{primitive_plugin_estimator}
\end{equation}
Choose (a large) $M > 0$ and write $z_M := \min\{h_k, M\}$. Let $U_n^M$ be the greatest convex minorant of $U_n$ on $[0,z_M]$. That is, $U_n^M$ is the greatest convex function on $[0,z_M]$ which lies below $U_n$. Then, define:
\begin{equation}
    \hat{H}_n^M(z) := \begin{cases}\label{local_estimator_H}
        U_n^{M,r}(z) & \text{ if } z \in \bigl[0,z_M\bigr)\\
        U_n^{M,l}(z_M) & \text{ if } z \geq z_M,
    \end{cases}
\end{equation}
where $U_n^{M,l}$, $U_n^{M,r}$ denote the left- and right-derivative of $U_n^M$ respectively. The reason we cannot simply extend the definition of $U_n^M$ to the whole of $[0,\infty)$ is due to the fact that $U_n$ is concave on $[h_{k}, \infty)$. As a result, the greatest convex minorant of $U_n$ on $[0,\infty)$ is the zero function. Because of the convexity of $U_n^M$ on $[0,z_M]$, $\hat{H}_n^M$ is guaranteed to be non-decreasing, and is referred to as an isotonic estimator. Analogously to (\ref{primitive_plugin_estimator}) we define for $z \geq 0$:
\begin{equation}
    U(z) := \int_0^z H(t)\mathrm{d}t = \frac{2}{\pi}\int_0^z \sqrt{z-t}\mathrm{d}F(t). \label{primitive_F_eq}
\end{equation}
Note that $U^r(z) = H(z)$, so indeed, the right-derivative of $U_n^M$ is a natural choice for an estimator of $H$. In the next theorem we prove consistency of $\hat{H}_n^M$. Currently, it is not known whether $\hat{H}_n := \hat{H}_n^\infty$ is a globally consistent estimator.

\begin{theorem}[Consistency of $\hat{H}_n^M$]
Let $M > 0$ and let $\hat{H}_n^M$ be as in (\ref{local_estimator_H}). Let $z \in [0,M)$, then with probability one:
    \[H(z-) \leq \liminf_{n \to \infty} \hat{H}_n^M(z) \leq \limsup_{n \to \infty} \hat{H}_n^M(z) \leq H(z).\]
    In particular, if $z$ is a continuity point of $H$: $\lim_{n \to \infty} \hat{H}_n^M(z) = H(z)$ almost surely.
\end{theorem}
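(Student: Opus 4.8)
The plan is to exploit the standard machinery for convergence of isotonic (Grenander-type) estimators: the greatest convex minorant operator is continuous with respect to uniform convergence on compacta, and the derivative of a greatest convex minorant can be sandwiched between difference quotients of the original function. First I would establish that $U_n \to U$ locally uniformly, almost surely. Since $\bar{F}_n$ is a piecewise constant strongly consistent estimator for $F$, we have $\bar{F}_n(z) \to F(z)$ for all $z \geq 0$ almost surely, and by Lemma \ref{monotone_estimator_lemma} (applied to the monotone functions $\bar{F}_n$) this convergence is simultaneous in $z$. Because $F$ is continuous (as noted in the stereology section, $F$ is absolutely continuous), pointwise convergence of monotone functions to a continuous limit upgrades to locally uniform convergence. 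Writing $U_n(z) = \frac{2}{\pi}\int_0^z \sqrt{z-t}\,\mathrm{d}\bar{F}_n(t) = \frac{1}{\pi}\int_0^z \frac{\bar{F}_n(t)}{\sqrt{z-t}}\,\mathrm{d}t$ via integration by parts, and similarly for $U$, I would then show $\sup_{z \in [0,M']}|U_n(z) - U(z)| \to 0$ for every $M' > 0$: the kernel $1/\sqrt{z-t}$ is integrable, the $\bar{F}_n$ are eventually uniformly bounded on $[0,M']$ (since $F$ is continuous hence bounded there), and dominated convergence together with the uniform convergence $\bar{F}_n \to F$ on compacta gives the claim.

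Next I would pass from $U_n$ to its greatest convex minorant $U_n^M$ on $[0, z_M]$. The key point is that for $n$ large enough $z_M = \min\{h_k, M\} = M$ almost surely, because $h_k \to \infty$ (the number and spread of jump locations of $\bar{F}_n$ grows; more carefully, $\bar{F}_n(z) \to F(z) > 0$ for $z$ large forces $\bar F_n$ to have jumps beyond any fixed level eventually). So I may work on the fixed interval $[0,M]$. The greatest convex minorant operator is continuous for the uniform norm: $\|U_n^M - U^M\|_{\infty,[0,M]} \leq \|U_n - U\|_{\infty,[0,M]} \to 0$, where $U^M$ is the greatest convex minorant of $U$ on $[0,M]$. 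But $U$ is already convex on $[0,M]$ — indeed $U' = H$ is nondecreasing — so $U^M = U$ on $[0,M]$. Hence $U_n^M \to U$ uniformly on $[0,M]$, with $U$ convex and continuously differentiable there (its derivative is $H$, which need not be continuous, but $U$ is $C^1$ wherever $H$ is continuous and is in any case convex with one-sided derivatives everywhere).

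Finally I would convert uniform convergence of the convex functions $U_n^M \to U$ into the sandwich for their derivatives. This is the classical lemma: if convex functions $g_n \to g$ uniformly on a neighborhood of $z$, then for any $\epsilon > 0$,
\[
g^r(z) = \lim_{\delta \downarrow 0}\frac{g(z+\delta)-g(z)}{\delta} \geq \limsup_n \frac{g_n(z+\delta)-g_n(z)}{\delta} \geq \limsup_n g_n^r(z)
\]
and symmetrically $\liminf_n g_n^l(z) \geq g^l(z)$ — more precisely, using the monotonicity of difference quotients of convex functions, $g^l(z) \leq \liminf_n g_n^r(z) \leq \limsup_n g_n^r(z) \leq g^r(z)$. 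Applying this with $g_n = U_n^M$, $g = U$, and recalling $\hat{H}_n^M(z) = U_n^{M,r}(z)$ for $z \in [0, z_M) = [0,M)$ and $U^{l}(z) = H(z-)$, $U^r(z) = H(z)$, yields
\[
H(z-) \leq \liminf_{n\to\infty}\hat{H}_n^M(z) \leq \limsup_{n\to\infty}\hat{H}_n^M(z) \leq H(z),
\]
which is exactly the claim; at continuity points of $H$ the outer bounds coincide. I expect the main obstacle to be the bookkeeping around the interval endpoint — verifying that $z_M = M$ eventually (equivalently that $h_k \to \infty$ a.s.), and handling the derivative-sandwich cleanly at $z$ near the right endpoint $M$ where a two-sided neighborhood is still available since $z < M$; the rest is an assembly of standard facts (Helly/monotone convergence, continuity of the convex-minorant map, and the difference-quotient inequality for convex functions).
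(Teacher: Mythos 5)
Your proposal is correct and follows essentially the same route as the paper's proof: establish a.s. uniform convergence of $U_n \to U$ on $[0,M]$ (the paper argues $U_n \to U$ pointwise and upgrades via continuity and monotonicity of $U$, you argue via the integral representation and dominated convergence, a harmless variation), then apply Marshall's lemma (equivalently the $1$-Lipschitz property of the greatest-convex-minorant operator) using convexity of $U$, and finally sandwich the one-sided derivatives via monotone difference quotients of convex functions to recover $H(z-)$ and $H(z)$. The auxiliary step $z_M = M$ eventually (via $h_k\to\infty$ a.s.\ since $F$ is unbounded and $\bar F_n$ piecewise constant consistent) also appears in both.
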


\begin{proof}
    Let $z \in [0,M)$. Because $\bar{F}_n$ is piecewise constant and a consistent estimator of the unbounded function $F$ (recall equation (\ref{F_unbounded_eq})), it follows that $\lim_{n \to \infty} h_{k(n)} = \infty$ almost surely. Let $(\Omega, \mathcal{A},\PP)$ be a probability space supporting a Poisson process $\eta$, with intensity measure $\nu_{d-1} \times \mathbb{F}$. Choose $\Omega_0 \in \mathcal{A}$ with $\PP(\Omega_0)=1$ such that for all $\omega \in \Omega_0$ we have $\lim_{n \to \infty} h_{k(n)}(\omega) = \infty$ and $\lim_{n \to \infty}\bar{F}_n(h;\omega) = F(h)$ for all $h \geq 0$. For the remainder of the proof, take $n$ sufficiently large such that $h_{k(n)}(\omega) > M$. Note how $U_n$ and $U$ depend on $\bar{F}_n$ and $F$ respectively, see (\ref{primitive_plugin_estimator}) and (\ref{primitive_F_eq}). As a consequence, the pointwise convergence of $\bar{F}_n(\blank;\omega)$ to $F$ implies: $\lim_{n \to \infty} U_n(x;\omega) = U(x)$ for all $x \geq 0$. Note that $U$ is non-decreasing and continuous, therefore the convergence is also uniform on $[0,M]$. That is, $\lim_{n \to \infty}\sup_{x \in [0,M]}\left|U_n(x;\omega) - U(x)\right| = 0$. Because $U$ is defined as the integral of a non-decreasing function, it is convex. A variant of Marshall's lemma (the convex analogue of 7.2.3. on p. 329 in \cite{Robertson1988}) directly yields:
    \[\sup_{x \in [0,M]}\left|U_n^M(x;\omega) - U(x)\right| \leq \sup_{x \in [0,M]}\left|U_n(x;\omega) - U(x)\right|.\]
    Therefore, we also have $\lim_{n \to \infty}\sup_{x \in [0,M]}\left|U_n^M(x;\omega) - U(x)\right| = 0$. Take $\delta > 0$ such that $z + \delta < M$. Then, for each $0 < h < \delta$ we have by the convexity of $U_n^M$:
    \[\frac{U_n^M(z;\omega)- U_n^M(z-h;\omega)}{h} \leq U_n^{M,l}(z;\omega) \leq U_n^{M,r}(z;\omega) \leq \frac{U_n^M(z;\omega)- U_n^M(z+h;\omega)}{h}.\]
    By using $\lim_{n \to \infty}\sup_{x \in [0,M]}|U_n^M(x;\omega) - U(x)| = 0$, the following holds:
    \[\frac{U(z)- U(z-h)}{h} \leq \liminf_{n \to \infty}U_n^{M,r}(z;\omega) \leq \limsup_{n \to \infty}  U_n^{M,r}(z;\omega) \leq \frac{U(z)- U(z+h)}{h}.\]
    The result follows from letting $h \downarrow 0$ and by recognizing that $U^l(z) = H(z-)$ and $U^r(z) = H(z)$. 
\end{proof}

\begin{remark}
    By choosing $M > 0$ very large, the estimators $\hat{H}_n := \hat{H}_n^\infty$ and $\hat{H}_n^M$ will in practice often coincide, since we will typically observe $h_k < M$. Therefore, in the remainder of this paper we will only consider computational aspects and simulation performance of the estimator $\hat{H}_n$. 
\end{remark}


We now show that computing the isotonic estimator $\hat{H}_n$ is equivalent to solving an isotonic regression problem. This is achieved via the following lemma, which is a straightforward modification of Lemma 2 in \cite{Groeneboom1995}. 

 \begin{lemma}\label{L2_projection_lemma}
     Let $M>0$, and let $\varphi$ be an a.e. continuous non-negative function on $[0,M]$. Define the function $\Phi$, for $z \geq 0$ as:
     \[\Phi(z) = \int_0^z \varphi(x)\mathrm{d}x.\]
     Let $\Phi^*$ be the greatest convex minorant of $\Phi$ on $[0,M]$. Let $\Phi^{*,r}$ be the right-derivative of $\Phi^*$, then:
     \[\int_0^M (\varphi(x) - \psi(x))^2\mathrm{d}x \geq \int_0^M (\varphi(x) - \Phi^{*,r}(x))^2\mathrm{d}x + \int_0^M (\Phi^{*,r} - \psi(x))^2\mathrm{d}x,\]
     for all functions $\psi$ in the set:
     \[\mathcal{F}_M := \left\{\psi:[0,M]\to[0,\infty): \psi \text{ is non-decreasing and right-continuous}\right\}.\]
 \end{lemma}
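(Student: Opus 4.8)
The plan is to read Lemma~\ref{L2_projection_lemma} as the assertion that $\Phi^{*,r}$ is the $L^2([0,M])$-projection of $\varphi$ onto the convex cone $\mathcal{F}_M$ of non-decreasing, right-continuous, non-negative functions, so that the displayed inequality becomes the Pythagorean inequality for projections onto a convex cone. Concretely, I would first expand the square,
\[\int_0^M (\varphi-\psi)^2\,dx = \int_0^M (\varphi-\Phi^{*,r})^2\,dx + 2\int_0^M (\varphi-\Phi^{*,r})(\Phi^{*,r}-\psi)\,dx + \int_0^M (\Phi^{*,r}-\psi)^2\,dx,\]
which reduces the lemma to showing that the cross term $\int_0^M (\varphi-\Phi^{*,r})(\Phi^{*,r}-\psi)\,dx$ is non-negative for every $\psi\in\mathcal{F}_M$.

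Next I would record two structural facts about the greatest convex minorant. Since $\Phi(z)=\int_0^z\varphi$ is continuous, non-decreasing, and $\Phi(0)=0$, the minorant $\Phi^*$ touches $\Phi$ at both endpoints of $[0,M]$, hence $G:=\Phi-\Phi^*\ge 0$ on $[0,M]$ with $G(0)=G(M)=0$; moreover $\Phi^*$ is convex, hence absolutely continuous with $\Phi^*(z)=\int_0^z\Phi^{*,r}$, so that $dG=(\varphi-\Phi^{*,r})\,dx$ as a signed measure. Secondly, on each maximal open interval on which $\Phi^*<\Phi$ the function $\Phi^*$ is affine --- otherwise it could be lifted slightly and stay a convex minorant --- so $\Phi^{*,r}$ is constant there, and therefore the Lebesgue--Stieltjes measure $d\Phi^{*,r}$ is concentrated on the contact set $\{z:\ G(z)=0\}$.

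With this, the cross term is computed by Stieltjes integration by parts: using $dG=(\varphi-\Phi^{*,r})\,dx$,
\[\int_0^M (\varphi-\Phi^{*,r})(\Phi^{*,r}-\psi)\,dx = \int_0^M (\Phi^{*,r}-\psi)\,dG = \bigl[(\Phi^{*,r}-\psi)G\bigr]_0^M - \int_0^M G\,d(\Phi^{*,r}-\psi),\]
where the boundary term vanishes since $G(0)=G(M)=0$. The remaining integral splits as $-\int_0^M G\,d\Phi^{*,r}+\int_0^M G\,d\psi$: the first term is $0$ because $G$ vanishes on the support of $d\Phi^{*,r}$, and the second is non-negative because $G\ge 0$ and $d\psi\ge 0$ (as $\psi$ is non-decreasing). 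Hence the cross term is $\ge 0$, and the lemma follows.

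The main obstacle is entirely technical rather than conceptual: one must justify carefully that $\Phi^*$ is genuinely affine on each excursion interval of $\Phi-\Phi^*$ (so that $d\Phi^{*,r}$ indeed lives on the contact set), handle the integrability bookkeeping (the right-derivative $\Phi^{*,r}$ can fail to be bounded near the right endpoint, and $\varphi$ need not lie in $L^2$), and verify that the Stieltjes integration by parts applies to the monotone functions $\Phi^{*,r}$ and $\psi$ against the continuous function $G$. These are exactly the points dealt with in Lemma~2 of \cite{Groeneboom1995}, and the proof here is a line-by-line adaptation of that argument.
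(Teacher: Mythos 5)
Your argument is the standard cone-projection proof: reduce the Pythagorean inequality to non-negativity of the cross term, write $(\varphi-\Phi^{*,r})\,\mathrm{d}x=\mathrm{d}G$ with $G=\Phi-\Phi^*\ge 0$ vanishing at $0$ and $M$, integrate by parts, and use that $\mathrm{d}\Phi^{*,r}$ is carried by the contact set $\{G=0\}$ while $\mathrm{d}\psi\ge 0$. This is precisely Lemma~2 of \cite{Groeneboom1995}, which the paper invokes without reproof, so your proposal matches the intended argument; the technical points you flag (affinity of $\Phi^*$ on excursion intervals, integrability of $\Phi^{*,r}$) are exactly the ones handled there.
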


We use Lemma \ref{L2_projection_lemma} to show that $\hat{H}_n$ may be interpreted as the $L^2$-projection of $H_n$ on the space of monotone functions. Recall that $h_1,h_2,\dots,h_{k}$ are the unique jump locations of $\bar{F}_n$. Additionally, let $h_0 = 0$. Define $\tilde{H}_n$ to be the piece-wise constant function on $[0,h_k)$ which is given by:
\[\tilde{H}_n(z) = \frac{U_n(h_{i+1}) - U_n(h_{i})}{h_{i+1} - h_{i}}, \text{ \quad } z \in [h_{i},h_{i+1}), \text{ \ } i \in \{0,1,\dots,k-1\}.\]
For $z \in [0,h_{k}]$, let: $\tilde{U}_n(z) = \int_0^z \tilde{H}_n(t)\mathrm{d}t$. Then, $U_n(h_i) = \tilde{U}_n(h_i)$ for all $i \in \{0,1,\dots,k\}$. While $U_n$ is concave between successive jump locations (due to the square root), $\tilde{U}_n$ is linear between successive jump locations. As a consequence, $U_n$ and $\tilde{U}_n$ have the same greatest convex minorant. Hence, $\hat{H}_n(z) = U_n^{*,r}(z) = \tilde{U}_n^{*,r}(z)$, for $z \in [0,h_{k})$. Finally, by taking $\varphi = \tilde{H}_n$ (and $\varphi = H_n$) and $M = h_{k}$ in Lemma \ref{L2_projection_lemma} we see that:
\begin{equation}
     U_n^{*,r} = \argmin_{H \in \mathcal{F}_{h_{k}}}\int_0^{h_{k}} (H(x) - H_n(x))^2 \mathrm{d}x = \argmin_{H \in \mathcal{F}_{h_{k}}}\int_0^{h_{k}} (H(x) - \tilde{H}_n(x))^2 \mathrm{d}x.\label{minimization_equation}
 \end{equation}

\begin{figure}[t!]
    \centering
    \includegraphics{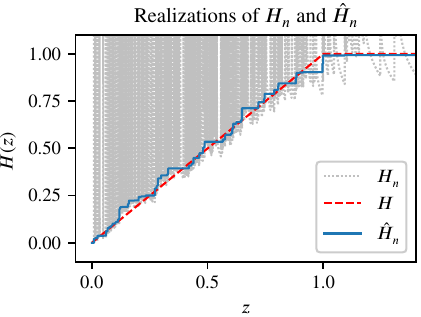}
    \caption{A comparison of the plugin estimator $H_n$ and the isotonic estimator $\hat{H}_n$. The actual underlying $H$ is equal to the CDF of a uniform distribution on $(0, 1)$. }
    \label{fig:stereology_example}
\end{figure}

Because $\tilde{H}_n$ is piece-wise constant, in (\ref{minimization_equation}) we may even minimize over all functions in $\mathcal{F}_{h_k}$ which are also piece-wise constant with jump locations at $h_1,h_2,\dots,h_k$. Hence, $\hat{H}_n$ is piece-wise constant and when solving the minimization problem in (\ref{minimization_equation}) we only seek to determine the values $\hat{H}_n$ attains at these jump locations. Let $y_i = \tilde{H}_n(h_i)$, and $w_i = h_{i+1} - h_{i}$. Then, by setting $\hat{\beta} = (\hat{H}_n(h_1),\hat{H}_n(h_2),\dots,\hat{H}_n(h_{k-1}))$, (\ref{minimization_equation}) may be written as:
 \begin{equation}
     \hat{\beta} = \argmin_{\beta \in \mathcal{C}_{+}} \sum_{i=1}^{k-1} (\beta_i - y_i)^2w_i , \label{isotonic_regression_characterization}
 \end{equation}
 where the closed convex cone $\mathcal{C}_{+}$ is given by: $\mathcal{C}_{+} := \{\beta \in \RR^{k-1}: 0\leq \beta_1 \leq \beta_2 \leq \dots \leq \beta_{k-1}\}$. Finally, observe that $\hat{H}_n(h_{k}) = \hat{H}_n(h_{k-1})$. The optimization problem in (\ref{isotonic_regression_characterization}) is indeed an isotonic regression problem. We note that implementations for solving this problem are widely available. In Figure \ref{fig:stereology_example} a realization of $H_n$ and the corresponding realization of $\hat{H}_n$ is shown.

\begin{figure}[t!]
\centering
\makebox[\linewidth]{\makebox[\linewidth]{
    \begin{subfigure}[t]{0.5\linewidth}
        \centering
        \includegraphics{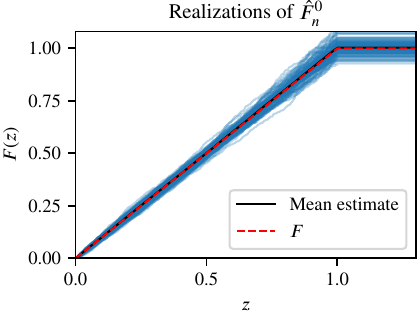}
    \end{subfigure}
    \begin{subfigure}[t]{0.5\linewidth}
        \centering
        \includegraphics{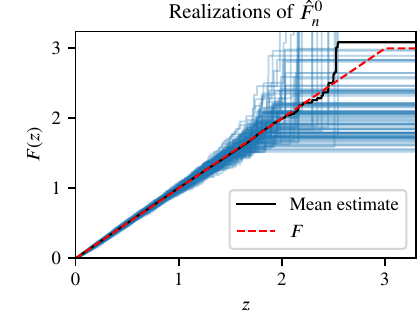}
    \end{subfigure}\hfill}}
    \makebox[\linewidth]{\makebox[\linewidth]{
    \begin{subfigure}[t]{0.5\linewidth}
        \centering
        \includegraphics{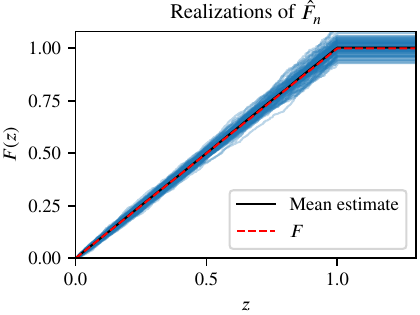}
    \end{subfigure}
    \begin{subfigure}[t]{0.5\linewidth}
        \centering
       \includegraphics{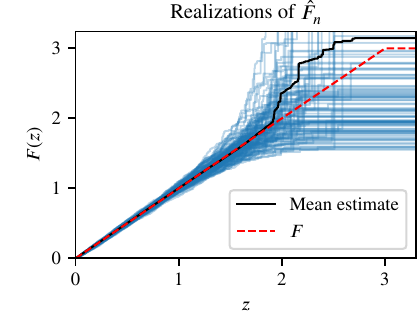}
    \end{subfigure}\hfill}}
    \caption{Simulation results for $\hat{F}_n^0$ and $\hat{F}_n$, where $F$ is given by (\ref{true_F_option1}), with $M=1$ (Left) and $M=3$ (Right).}\label{Figure_uniform_simulations}
\end{figure}

\section{Simulations}\label{section_simulations}

In previous sections we have derived consistent estimators for the distribution function corresponding to the underlying Poisson process $\eta$, both in the direct setting ($\RR^d$) and in the stereological setting ($\RR^{d-1}$). Additionally, we have shown how to compute these estimators. In this section we perform some simulations such that we can assess their performance. We compute Laguerre tessellations using the Voro++ software \cite{Rycroft2009}. For the estimators $\hat{F}_n^{0}$ and $\hat{F}_n$ we focus on the case $d=2$. Let $M > 0$ and $z\geq0$, we consider the following choices for the underlying $F$.
\begin{align}
    F_1(z;M) &= z\cdot\mathds{1}\{z < M\} + M\cdot\mathds{1}\{z \geq M\} \label{true_F_option1}\\
    F_2(z) &= 0.01\cdot\mathds{1}\{z \geq 1\} + 0.04\cdot\mathds{1}\{z \geq 8\} + 0.95\cdot\mathds{1}\{z \geq 10\}.\label{true_F_option2}
\end{align}

Note that $F_2$ corresponds to the $F$ in Example \ref{example_discrete_laguerre_text}. For both choices of $F$ it is simple to simulate a corresponding Poisson process, because these Poisson processes can be recognized as independently marked homogeneous Poisson processes. Throughout this section we write $P_n:=\EE(\eta^{0}(W_n \times (0,\infty))$. We choose a square observation window $W_n$ such that $P_n = 1000$. In words, we choose a square $W_n$ with an area such that the expected number of observed points of $\eta^0$ in $W_n$ is equal to 1000. First, we consider $F_1$ as the underlying truth. For $M=1$ and $M=3$ we repeat the simulation procedure with this $F$ 100 times, such that we obtain 100 realizations of $\hat{F}_n^0$ and $\hat{F}_n$ for each value of $M$. For each estimator, and each choice of $M$, we also compute the pointwise average of all realizations. The results are shown in Figure \ref{Figure_uniform_simulations}. A blue line is a realization of an estimator, a black line is a pointwise average. We can clearly see that estimates of $F(z)$ for $z$ close to zero are much more accurate than estimates of $F(z)$ for large values of $z$. This is especially evident for the results corresponding to $M=3$. This is not too surprising in view of the crystallization interpretation of a Laguerre tessellation as described in section \ref{section_introduction_pois_laguerre}. We expect that points with large weights are less likely to generate non-empty cells. As a result we sample points with large weights less often, which makes estimation of $F(z)$ for large values of $z$ more difficult. This also means that we expect that the accuracy of an estimate of $F$ near zero is much more important if we wish to use this estimate to simulate a Poisson-Laguerre tessellation which is similar to the observed tessellation. From Figure \ref{Figure_uniform_simulations} it is not very clear whether there are significant differences between $\hat{F}_n^0$ and $\hat{F}_n$, though it does seem that $\hat{F}_n^0$ performs slightly better on average when $z$ is large.

\begin{table}[t!]
\centering
\small
\begin{tabular}
    {
  r
  S[table-format=1.6]
>{{{\lp}}} 
S[round-precision=2, table-format = -1.4,table-space-text-pre=\lp]
@{,\,} 
S[round-precision=2, table-format = 1.4,table-space-text-post=\rp]
<{{{\rp}}} 
    S[table-format=1.5]
>{{{\lp}}} 
S[round-precision=2,table-format = -1.4,table-space-text-pre=\lp]
@{,\,} 
S[round-precision=2,table-format = 1.4,table-space-text-post=\rp]
<{{{\rp}}} 
 S[table-format=1.4]
>{{{\lp}}} 
S[round-precision=2, table-format = -1.3,table-space-text-pre=\lp]
@{,\,} 
S[round-precision=2, table-format = 1.3,table-space-text-post=\rp]
<{{{\rp}}} 
@{}l@{}
}
\toprule
\multicolumn{1}{c}{} & \multicolumn{3}{c}{$F(1) - \hat{F}_n^{0}(1)$} & \multicolumn{3}{c}{$F(8) - \hat{F}_n^{0}(8)$} & \multicolumn{3}{c}{$F(10) - \hat{F}_n^{0}(10)$}&\\
\cmidrule(r){1-1}\cmidrule(lr){2-4}\cmidrule(lr){5-7}\cmidrule(l){8-11} 
  \multicolumn{1}{c}{$P_n$} &        {mean ($|\blank|$)} &    \multicolumn{2}{c}{(2.5\%, 97.5\%)} & {mean ($|\blank|$)} &    \multicolumn{2}{c}{(2.5\%, 97.5\%)} & {mean ($|\blank|$)} &    \multicolumn{2}{c}{(2.5\%, 97.5\%)} & \\
\cmidrule(r){1-1}\cmidrule(lr){2-4}\cmidrule(lr){5-7}\cmidrule(l){8-11} 
 500 & 0.002835 & -0.005655 & 0.005183 & 0.007339 & -0.018223 & 0.017858 & 0.042964 & -0.113546 & 0.100908 &   \\
1000 & 0.001966 & -0.004195 & 0.004581 & 0.004241 & -0.010065 & 0.009870 & 0.032670 & -0.065045 & 0.090793 &   \\
2000 & 0.001452 & -0.003405 & 0.002933 & 0.003488 & -0.007925 & 0.008054 & 0.019188 & -0.041667 & 0.046299 &   \\
5000 & 0.000845 & -0.001922 & 0.001995 & 0.002058 & -0.004617 & 0.005238 & 0.014588 & -0.030272 & 0.030966 &   \\
\bottomrule
\end{tabular}
\caption{Simulation results for $\hat{F}_n^0$, where $F$ is given by (\ref{true_F_option2}).}\label{table_estimator_F0}

\begin{tabular}
    {
  r
  S[table-format=1.6]
>{{{\lp}}} 
S[round-precision=2, table-format = -1.4,table-space-text-pre=\lp]
@{,\,} 
S[round-precision=2, table-format = 1.4,table-space-text-post=\rp]
<{{{\rp}}} 
    S[table-format=1.5]
>{{{\lp}}} 
S[round-precision=2,table-format = -1.4,table-space-text-pre=\lp]
@{,\,} 
S[round-precision=2,table-format = 1.4,table-space-text-post=\rp]
<{{{\rp}}} 
 S[table-format=1.4]
>{{{\lp}}} 
S[round-precision=2, table-format = -1.2,table-space-text-pre=\lp]
@{,\,} 
S[round-precision=2, table-format = 1.2,table-space-text-post=\rp]
<{{{\rp}}} 
@{}l@{}
}
\toprule
\multicolumn{1}{c}{} & \multicolumn{3}{c}{$F(1) - \hat{F}_n^{0}(1)$} & \multicolumn{3}{c}{$F(8) - \hat{F}_n^{0}(8)$} & \multicolumn{3}{c}{$F(10) - \hat{F}_n^{0}(10)$}&\\
\cmidrule(r){1-1}\cmidrule(lr){2-4}\cmidrule(lr){5-7}\cmidrule(l){8-11} 
  \multicolumn{1}{c}{$P_n$} &        {mean ($|\blank|$)} &    \multicolumn{2}{c}{(2.5\%, 97.5\%)} & {mean ($|\blank|$)} &    \multicolumn{2}{c}{(2.5\%, 97.5\%)} & {mean ($|\blank|$)} &    \multicolumn{2}{c}{(2.5\%, 97.5\%)} & \\
\cmidrule(r){1-1}\cmidrule(lr){2-4}\cmidrule(lr){5-7}\cmidrule(l){8-11}  
 500 & 0.002939 & -0.006894 & 0.005743 & 0.007615 & -0.019025 & 0.016683 & 0.386360 & -1.911011 & 0.465305 &   \\
1000 & 0.001975 & -0.003904 & 0.005038 & 0.004419 & -0.010097 & 0.010227 & 0.267426 & -1.100581 & 0.332318 &   \\
2000 & 0.001510 & -0.003686 & 0.003057 & 0.003538 & -0.007421 & 0.008251 & 0.169637 & -0.608193 & 0.252847 &   \\
5000 & 0.000885 & -0.002042 & 0.002024 & 0.002134 & -0.004810 & 0.004501 & 0.107238 & -0.300698 & 0.210355 &   \\
\bottomrule
\end{tabular}
\caption{Simulation results for $\hat{F}_n$, where $F$ is given by (\ref{true_F_option2}).}\label{table_estimator_F}
\end{table}

\begin{figure}[t!]
\centering
    \makebox[\linewidth]{\makebox[\linewidth]{
    \begin{subfigure}[t]{0.5\linewidth}
        \centering
        \includegraphics{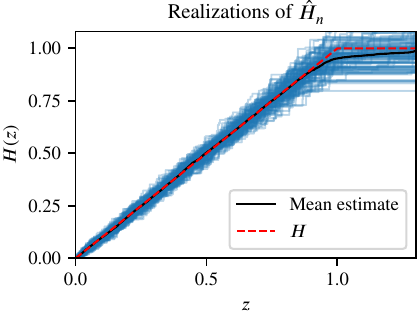}
    \end{subfigure}
    \begin{subfigure}[t]{0.5\linewidth}
        \centering
        \includegraphics{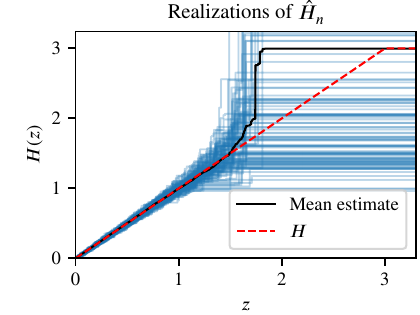}
    \end{subfigure}\hfill}}
    \caption{Simulation results for $\hat{H}_n$ where $H$ is given by (\ref{true_F_option1}), with $M=1$ (Left) and $M=3$ (Right).}\label{Figure_stereology_uniform_simulations}
\end{figure}

Now, we consider $F_2$ as the underlying truth. For this choice of $F$ we only observe points with weights in the set $\{1,8,10\}$. As a result, realizations of $\hat{F}_n^{0}$ and $\hat{F}_n$ will only have jumps at these values. We can therefore easily quantify the error of $\hat{F}_n^{0}$ by computing $F(z) - \hat{F}_n^{0}(z)$ for $z \in \{1,8,10\}$. Of course, we can do the same for $\hat{F}_n$. Again, we repeat the simulation procedure 100 times. This time however, we also repeat this for multiple choices of observation windows. We choose $W_n$ such that $P_n$ is equal to 500, 1000, 2000 or 5000. The simulation results are shown in Tables \ref{table_estimator_F0} and \ref{table_estimator_F}. This table contains the mean over all 100 absolute errors for each choice of $W_n$ and for each choice of $z$. We also include the $2.5\%$ and $97.5\%$ quantiles of these 100 errors. We can see that at $z=1$ and $z=8$ the performance of the estimators $\hat{F}_n^{0}$ and $\hat{F}_n$ is quite similar. However, at $z=10$ it is clear that $\hat{F}_n^{0}$ performs much better. This is somewhat surprising, after all, $\hat{F}_n$ takes into account more information than $\hat{F}_n^0$. We do not yet know whether the difference in performance is due to differences in numerical stability of the inversion procedures or due to different rates of convergence of the estimators. We also should point out that for a single realization of $\hat{F}_n$ corresponding to $P_n = 500$ we observed a numerical overflow. That is, we observed: $\hat{F}_n(10) <\hat{F}_n(8)$. It may therefore be of future interest to study whether there are more numerically stable ways to compute $\hat{F}_n$.

Finally, we show some simulation results for $\hat{H}_n$. In the previous simulations we observed that $\hat{F}_n^{0}$ performs better than $\hat{F}_n$. Therefore, we compute $\hat{H}_n$ via $\bar{F}_n = \hat{F}_n^{0}$. We consider $d=3$ such that we observe a 2D sectional tessellation. We take the underlying $H$ equal to $F_1$ as in (\ref{true_F_option1}). Again, we choose $W_n$ such that $P_n=1000$ and perform 100 repeated simulations for both $M=1$ and $M=3$. The results are shown in Figure \ref{Figure_stereology_uniform_simulations}. As expected, in the stereological setting we observe a bigger variance in realizations of $\hat{H}_n$ compared to the realizations shown in Figure \ref{Figure_uniform_simulations}. Overall, all estimators seem to perform satisfactory.

\appendix
\section{Proofs}\label{appendix_proofs}

\begin{proof}[Proof of Lemma \ref{monotone_estimator_lemma}]
    Let $(\Omega, \mathcal{A},\PP)$ be a probability space supporting the sequence $(F_n)_{n \geq 1}$. For $z \in \RR$, there exists by assumption a set $\Omega_z \in \mathcal{A}$ be such that $\lim_{n \to \infty}F_n(z;\omega) = F(z)$ for all $\omega \in \Omega_z$ and $\PP(\Omega_z) = 1$. Define $D := \{z \in \RR : z \in \mathbb{Q} \text{ or } z \text{ is a discontinuity point of $F$}\}$. Because monotone functions have at most countably many discontinuity points and because the rationals are countable it follows that $D$ is countable. Letting $\Omega' := \cap_{z \in D}\Omega_z$ we obtain $\PP(\Omega') = 1$. Let $z \in \RR$ and $\omega \in \Omega'$, we show that $\lim_{n \to \infty}F_n(z;\omega) = F(z)$. If $z \in \mathbb{Q}$ or if $z$ is a discontinuity point of $F$, then the result is immediate. Suppose that $z \in \RR \setminus \mathbb{Q}$ is a continuity point of $F$. For $m \in \NN$ choose $\delta_m > 0$ such that $|F(z) - F(x)| < 1/m$ whenever $|z-x| < \delta_m$. Choose $r_m, s_m \in \mathbb{Q}$ such that $r_m \leq z \leq s_m$ and $|r_m - z| < \delta_m$ and $|s_m - z| < \delta_m$. By the monotonicity of each $F_n$, and since $\omega \in \Omega'$ we have for all $m \in \NN$:
    \[F(r_m) = \lim_{n \to \infty}F_n(r_m;\omega) \leq \lim_{n \to \infty}F_n(z;\omega) \leq \lim_{n \to \infty}F_n(s_m;\omega) = F(s_m).\]
    Due to the choice of $r_m$ and $s_m$ we obtain:
    \[-\frac{1}{m} < F(r_m) - F(z) \leq \lim_{n \to \infty}F_n(z;\omega) - F(z) \leq F(s_m) - F(z) < \frac{1}{m}.\]
    The result now follows since $|\lim_{n \to \infty}F_n(z;\omega) - F(z)| < 1/m$ for all $m \in \NN$. 
\end{proof}

\begin{proof}[Proof of Lemma \ref{continuity_F_in_G}]
    Let $R > 0$ and assume $\lim_{n \to \infty} F_n(z) = F(z)$ for all $z \in [0,R)$. Fix $z \in [0,R)$. We introduce the following shorthand notation, for $h \in [0,z]$:
    \[\phi_n(h) := \int_0^h (h-t)^\frac{d}{2}\mathrm{d}F_n(t), \text{ and } \phi(h) := \int_0^h (h-t)^\frac{d}{2}\mathrm{d}F(t).\]
    The triangle inequality yields the following bound:
    \begin{align}
     \begin{split}
            \left| G_{F_n}(z) - G_F(z)\right| &\leq\left| \int_0^z \exp\left(-\kappa_d \int_0^h (h-t)^\frac{d}{2}\mathrm{d}F_n(t) \right) -  \exp\left(-\kappa_d \int_0^h (h-t)^\frac{d}{2}\mathrm{d}F(t) \right)\mathrm{d}F_n(h)\right| +\\
        &\phantom{\leq} + \left|\int_0^z \exp\left(-\kappa_d \int_0^h (h-t)^\frac{d}{2}\mathrm{d}F(t) \right)\mathrm{d}(F_n - F)(h) \right|
        \end{split}\nonumber \\
        \begin{split}
             &\leq \sup_{h \in [0,z]}\left|\exp\left(-\kappa_d \phi_n(h) \right) - \exp\left(-\kappa_d \phi_n(h) \right) \right|F_n(z) + \\
             &\phantom{\leq} + \left|\int_0^z \exp\left(-\kappa_d \phi(h) \right)\mathrm{d}(F_n - F)(h) \right|.\label{GF_continuity_lemma_proofeq}
        \end{split}
    \end{align}
    Let us consider the first term of (\ref{GF_continuity_lemma_proofeq}). Fix $h \in [0,z]$. Since $F_n$ converges pointwise to $F$ on $[0,z]$ and $t \mapsto (h-t)^\frac{d}{2}\mathds{1}\{h \geq t\}$ is continuous and bounded on $[0,z]$ it follows that $\lim_{n \to \infty}\phi_n(h) = \phi(h)$. Hence, the sequence of monotone functions $\exp\left(-\kappa_d \phi_n(\blank) \right)$ converges pointwise to the monotone function $\exp\left(-\kappa_d \phi(\blank) \right)$ on $[0,z]$. Because $\phi$ is (absolutely) continuous, the limit function $\exp\left(-\kappa_d \phi(\blank) \right)$ is continuous. The convergence is therefore uniform on $[0,z]$, and we obtain: 
    \[\lim_{n \to \infty}\sup_{h \in [0,z]}\left|\exp\left(-\kappa_d \phi_n(h) \right) - \exp\left(-\kappa_d \phi(h) \right) \right|F_n(z) = 0 \blank F(z) = 0. \]
    Let us now consider the second term of (\ref{GF_continuity_lemma_proofeq}). Because $\exp\left(-\kappa_d \phi(\blank) \right)$ is continuous and bounded, it immediately follows from the pointwise convergence of $F_n$ to $F$ on $[0,z]$ that this second term vanishes as $n \to \infty$. This proves that $\lim_{n \to \infty} G_{F_n}(z) = G_F(z)$. The proof remains valid when $R = \infty$.
\end{proof}

\begin{proof}[Proof of Theorem \ref{FV_identifiability_theorem}]
     Let $z \geq 0$. For $i \in \{1,2\}$ write $F_i^V := V(\blank;F_i,m_{F_i})$. From equation (\ref{FV_proof_eq2}) it can be seen that the (Lebesgue-Stieltjes) measures associated with $F_i^V$ and $F_i$ are mutually absolutely continuous. The corresponding Radon-Nikodym derivative is given by:
    \[\frac{\mathrm{d}F_i^V}{\mathrm{d}F_i}(z) = \pi\int_z^\infty \exp\left(-\pi \int_0^u F_i(t)\mathrm{d}t \right)\mathrm{d}u =: p_i(z).\]
    Hence, we may also write:
    \[F_i(z) = \int_0^z \frac{\mathrm{d}F_i}{\mathrm{d}F_i^V}(h)\mathrm{d}F_i^V(h) = \int_0^z \frac{1}{p_i(h)}\mathrm{d}F_i^V(h).\]
    Since $m_{F_i} < \infty$ we have $p_i(0) < \infty$ and from its definition it is clear that $p_i$ is a decreasing function. Because $x \mapsto 1/x$ is Lipschitz on $(c,\infty)$ for $c > 0$ with Lipschitz constant $1/c^2$ we have for $h \in [0,z]$:
    \[\left|\frac{1}{p_1(h)} - \frac{1}{p_2(h)} \right| \leq \max\left\{\frac{1}{p_1(h)^2},\frac{1}{p_2(h)^2}\right\}\left|p_1(h) - p_2(h)\right| \leq C(h) \left|p_1(h) - p_2(h)\right|. \]
    Here we have defined $C(h) := \max\{1/p_1(h)^2, 1/p_2(h)^2\}$, which is increasing. As a consequence, we obtain the following upper bound for $|F_1(z) - F_2(z)|$:
    \begin{align}
        |F_1(z) - F_2(z)| &= \left|\int_0^z \frac{1}{p_1(h)} - \frac{1}{p_2(h)}\mathrm{d}F_1^V(h) + \int_0^z\frac{1}{p_2(h)}\mathrm{d}(F_1^V - F_2^V)(h)\right| \nonumber\\
        &\leq C(z) \int_0^z \left|p_1(h) - p_2(h)\right|\mathrm{d}F_1^V(h) + \left| \int_0^z\frac{1}{p_2(h)}\mathrm{d}(F_1^V - F_2^V)(h)\right|. \label{FV_identifiable_proof_eq}
    \end{align}
    We consider the two terms in (\ref{FV_identifiable_proof_eq}) separately. The first term of (\ref{FV_identifiable_proof_eq}) is bounded by:
    \begin{align}
        \begin{split}
            &\phantom{\leq}\pi C(z)\int_0^z\left|\int_0^\infty \exp\left(-\pi \int_0^u F_1(t)\mathrm{d}t \right) - \exp\left(-\pi \int_0^u F_2(t)\mathrm{d}t \right)\mathrm{d}u \right|\mathrm{d}F_1^V(h) +\\
        &\phantom{\leq}+ \pi C(z)\int_0^z\left|\int_0^h \exp\left(-\pi \int_0^u F_1(t)\mathrm{d}t \right) - \exp\left(-\pi \int_0^u F_2(t)\mathrm{d}t \right)\mathrm{d}u \right|\mathrm{d}F_1^V(h). \label{FV_identifiable_proof_eq2}
        \end{split}
    \end{align}
    The first term of (\ref{FV_identifiable_proof_eq2}) is equal to $\pi C(z)F_1^V(z)\left|m_{F_1} - m_{F_2} \right|$ and the second term of (\ref{FV_identifiable_proof_eq2}) is bounded by:
    \begin{align}
        &\phantom{\leq} \pi C(z)\int_0^z\int_0^h\left|\exp\left(-\pi \int_0^u F_1(t)\mathrm{d}t \right) - \exp\left(-\pi \int_0^u F_2(t)\mathrm{d}t \right) \right| \mathrm{d}u\mathrm{d}F_1^V(h)\nonumber \\
        &\leq \pi^2 C(z)\int_0^z\int_0^h\int_0^u \left|F_1(t) - F_2(t) \right|\mathrm{d}t\mathrm{d}u\mathrm{d}F_1^V(h)\label{FV_identifiable_proof_eq3}\\
        &\leq \pi^2 C(z)F_1^V(z)z\int_0^z \left|F_1(t) - F_2(t) \right|\mathrm{d}t. \nonumber
    \end{align}
    In (\ref{FV_identifiable_proof_eq3}) we used the fact $|e^{-x}-e^{-y}| \leq |x-y|$ for $x,y \geq 0$. Via the integration by parts formula, the second term of (\ref{FV_identifiable_proof_eq}) is bounded by:
    \begin{align*}
        &\phantom{\leq}\left|\left(F_1^V(z) - F_2^V(z)\right) \frac{1}{p_2(z)} - \int_0^z F_1^V(h) - F_2^V(h)\mathrm{d}\left(\frac{1}{p_2(h)} \right)(h) \right|\\
        &\leq \frac{1}{p_2(z)}\left|F_1^V(z) - F_2^V(z)\right| + \sup_{h \in [0,z]}\left|F_1^V(h) - F_2^V(h) \right|\left|\int_0^z\mathrm{d}\left(\frac{1}{p_2(h)} \right)(h)\right| \\
        &\leq \frac{2}{p_2(z)}\sup_{h \in [0,z]}\left|F_1^V(h) - F_2^V(h) \right|.
    \end{align*}
    Collecting all results, we obtain:
    \begin{align*}
        |F_1(z) - F_2(z)| &\leq \pi C(z)F_1^V(z)\left|m_{F_1} - m_{F_2} \right| + \pi^2 C(z)F_1^V(z)z\int_0^z \left|F_1(t) - F_2(t) \right|\mathrm{d}t + \\
        &\phantom{\leq} + \frac{2}{p_2(z)}\sup_{h \in [0,z]}\left|F_1^V(h) - F_2^V(h) \right|.
    \end{align*}
    Applying Theorem \ref{gronwall_thm} and (\ref{gronwall_thm_note}) yields:
    \begin{align}
        |F_1(z) - F_2(z)| &\leq K(z)\left(\pi C(z)F_1^V(z)\left|m_{F_1} - m_{F_2} \right| + \frac{2}{p_2(z)}\sup_{h \in [0,z]}\left|F_1^V(h) - F_2^V(h) \right|\right).\label{FV_identifiable_proof_final_eq}
    \end{align}
    Here, $K(z)$ is given by:
    \[K(z):= \left(1 + \pi^2 C(z)F_1^V(z)z^2\exp\left(\pi^2 C(z)F_1^V(z)z^2 \right) \right)\]
    The statement of the theorem immediately follows from (\ref{FV_identifiable_proof_final_eq}).
\end{proof}

\begin{proof}[Proof of Lemma \ref{mf_continuity_lemma}]
    We first note that we may assume without loss of generality that $(F_n)_{n \geq 1}$ is a sequence of functions not containing the zero function. Indeed, we could take an arbitrary subsequence $(n_l)_{l \geq 1} \subset (n)_{n\geq 1}$, and then use the pointwise convergence of $F_n$ to $F$ to choose a further subsequence $(n_k)_{k\geq 1} \subset (n_l)_{l \geq 1}$ such that $(F_{n_k})_{k \geq 1}$ is a sequence which does not contain the zero function. If we then show $\lim_{k \to \infty}m_{F_{n_k}} = m_F$ then the whole sequence also converges: $\lim_{n \to \infty}m_{F_n} = m_F$.
    
    We introduce the following notation, for $u \geq 0$ let:
    \[p_n(u) := \exp\left(-\pi \int_0^u F_n(t)\mathrm{d}t\right), \text{\quad } p(u) := \exp\left(-\pi \int_0^u F(t)\mathrm{d}t\right).\]
    Via the inequality $|e^{-x}-e^{-y}| \leq |x-y|$ for $x,y \geq 0$ and (\ref{integration_by_parts}) we obtain the following upper bound for $|p_n(u) - p(u)|$:
    \begin{equation}
        |p_n(u) - p(u)| \leq \pi \left|\int_0^u F(t) - F_n(t)\mathrm{d}t \right| = \pi \left|\int_0^u (u-t)\mathrm{d}(F - F_n)(t) \right| . \label{pointwise_convergence_pn_eq}
    \end{equation}
    Due to the pointwise convergence of $F_n$ to $F$ we obtain that $p_n$ converges pointwise to $p$ as $n \to \infty$. The triangle inequality yields:
    \begin{equation}
        |m_{F_n} - m_F| \leq \int_0^z |p_n(u) - p(u)|\mathrm{d}u + \int_z^\infty |p_n(u) - p(u)|\mathrm{d}u. \label{mf_convergence_eq}
    \end{equation}
    The first term of (\ref{mf_convergence_eq}) vanishes as $n \to \infty$. Indeed, $p_n$ converges pointwise to $p$ as $n \to \infty$, and since $|p_n(u) - p(u)| \leq 1$ the dominated convergence theorem may be applied. The dominated convergence theorem can also be used to show that the second term of (\ref{mf_convergence_eq}) vanishes as $n \to \infty$. We now show which dominating function $g$ may be used. Choose $z \geq 0$ large enough such that $F(z) > 0$ and set $c := F(z)$. We show that $m_F < \infty$:
    \begin{align}
        m_F &= \int_0^z p(u)\mathrm{d}u + \int_z^\infty p(u)\mathrm{d}u\nonumber \\ 
        &\leq z + \exp\left(-\pi \int_0^z F(t)\mathrm{d}t\right)\int_z^\infty \exp\left(-\pi \int_z^u F(t)\mathrm{d}t\right)\mathrm{d}u\nonumber \\
        &\leq z + \int_z^\infty \exp\left(-\pi c \int_z^u \mathrm{d}t\right)\mathrm{d}u = z + \frac{1}{\pi c}. \label{mf_finite_bound}
    \end{align}
    Choose $N \in \NN$ large enough such that $F_n(z) \geq c/2$ for all $n \geq N$. This can be done since $F_n$ converges pointwise to $F$. Applying the same bound as in (\ref{mf_finite_bound}) yields $|p_n(u)| \leq \exp\left(-\pi c (u-z)/2 \right)$ for all $u \geq z$ and all $n \geq N$. Hence, we may define the dominating function $g:[z,\infty) \to [0,\infty)$ as:
    \[g(u):= p(u) + \max\left\{\max_{k \in \{1,\dots,N\}}p_k(u), \exp\left(-\pi \frac{c}{2} (u-z) \right)\right\}.\]
    Note that $m_F < \infty$ and $m_{F_k} < \infty$ for all $k \in \{1,\dots,N\}$ by (\ref{mf_finite_bound}), applied to $F$ and $F_k$ respectively. As a consequence, $g$ is integrable on $[z,\infty)$. Because $|p_n(u) - p(u)| \leq g(u)$ for all $u \geq z$ the proof is finished.
\end{proof}

\begin{proof}[Proof of Lemma \ref{FV_continuity_lemma}]
Let $z \geq 0$, we readily obtain the following bound:
    \begin{align}
        |V(z;F_n,&m_n) - V(z;F,m)| \leq \nonumber \\
        \begin{split}
        &\left|\exp\left(-\pi \int_0^z F(t)\mathrm{d}t\right) - \exp\left(-\pi \int_0^z F_n(t)\mathrm{d}t\right) \right| +\\
        &+ \pi \left| F(z) - F_n(z)\right| \left|m - \int_0^z  \exp\left(-\pi \int_0^{u} F(t)\mathrm{d}t \right)\mathrm{d}u\right| +\\
        &+ \pi F_n(z)\left(|m_n - m| + \left|\int_0^z  \exp\left(-\pi \int_0^{u} F_n(t)\mathrm{d}t \right)- \exp\left(-\pi \int_0^{u} F(t)\mathrm{d}t \right)\mathrm{d}u\right| \right)
        \end{split}\label{FV_continuity_lemma_proof}
    \end{align}
    Each of the three terms of (\ref{FV_continuity_lemma_proof}) vanishes as $n \to \infty$, and each of the terms appearing here also appear in the proof of Lemma 5.5. The fact that the first term vanishes follows from (\ref{pointwise_convergence_pn_eq}). The second term vanishes due to the pointwise convergence of $F_n$ to $F$. The third term vanishes since $\lim_{n \to \infty}F_n(z) = F(z)$, $\lim_{n \to \infty} m_n = m$, and by using the same argument as for the first term in (\ref{mf_convergence_eq}).
\end{proof}

\section{Computational formula}\label{appendix_mn}
First of all, note that:
\begin{equation}
    \hat{m}_n = \int_0^{h_1}\exp\left(-\pi\int_0^u \hat{F}_n^0(t)\mathrm{d}t\right)\mathrm{d}u + \int_{h_1}^\infty \exp\left(-\pi\int_0^u  \hat{F}_n^0(t)\mathrm{d}t\right)\mathrm{d}u \label{mn_integral_split}
\end{equation}
The first integral of (\ref{mn_integral_split}) is equal to $h_1$, since $\hat{F}_n^0$ is zero on $[0,h_1)$. Let $h_{k+1} > h_k$, via a direct computation we obtain:
\begin{align*}
    \int_{h_1}^{h_{k+1}}&\exp\left(-\pi\int_0^u  \hat{F}_n^0(t)\mathrm{d}t\right)\mathrm{d}u =\\ 
    &=\sum_{i=2}^{k+1} \int_{h_{i-1}}^{h_i}\exp\left(-\pi\int_0^u  \hat{F}_n^0(t)\mathrm{d}t\right)\mathrm{d}u \\
    &= \sum_{i=2}^{k+1}\exp\left(-\pi\int_0^{h_{i-1}} \hat{F}_n^0(t)\mathrm{d}t\right) \int_{h_{i-1}}^{h_i}\exp\left(-\pi\int_0^u  \hat{F}_n^0(t)\mathrm{d}t\right)\mathrm{d}u \\
    &= \sum_{i=2}^{k+1}\exp\left(-\pi\int_0^{h_{i-1}} \hat{F}_n^0(t)\mathrm{d}t\right) \int_{h_{i-1}}^{h_i}\exp\left(-\pi(u - h_{i-1})\hat{F}_n^0(h_{i-1})\right)\mathrm{d}u \\
    &= \sum_{i=2}^{k+1}\exp\left(-\pi\sum_{j=1}^{i-1}\hat{F}_n^0(h_j)(h_j - h_{j-1}) \right)\frac{1}{\pi \hat{F}_n^0(h_{i-1})}\left(1-\exp\left(-\pi \hat{F}_n^0(h_{i-1})(h_i - h_{i-1}) \right) \right). 
\end{align*}
Letting $h_{k+1} \to \infty$ we obtain:
\begin{align*}
    \hat{m_n} &= h_1 + \exp\left(-\pi\sum_{j=1}^{k}\hat{F}_n^0(h_j)(h_j - h_{j-1}) \right)\frac{1}{\pi \hat{F}_n^0(h_{k})} + \\
    &\phantom{=} + \sum_{i=2}^{k}\exp\left(-\pi\sum_{j=1}^{i-1}\hat{F}_n^0(h_j)(h_j - h_{j-1}) \right)\frac{1}{\pi \hat{F}_n^0(h_{i-1})}\left(1-\exp\left(-\pi \hat{F}_n^0(h_{i-1})(h_i - h_{i-1}) \right) \right).
\end{align*}

\bibliographystyle{abbrv}
\addcontentsline{toc}{section}{References}
\bibliography{export}

\begin{thebibliography}{10}

\bibitem{Baccelli2009}
F.~Baccelli and B.~B{\l}aszczyszyn.
\newblock {\em Stochastic Geometry and Wireless Networks: Volume I Theory}.
\newblock Now Publishers, 2009.

\bibitem{Chiu2013}
S.~N. Chiu, D.~Stoyan, W.~S. Kendall, and J.~Mecke.
\newblock {\em Stochastic Geometry and its Applications}.
\newblock Wiley, 8 2013.

\bibitem{Daley2008}
D.~J. Daley and D.~Vere-Jones.
\newblock {\em An Introduction to the Theory of Point Processes: Volume II}.
\newblock Springer New York, 2008.

\bibitem{Falco2017}
S.~Falco, J.~Jiang, F.~D. Cola, and N.~Petrinic.
\newblock Generation of 3{D} polycrystalline microstructures with a conditioned
  {L}aguerre-{V}oronoi tessellation technique.
\newblock {\em Computational Materials Science}, 136:20--28, 8 2017.

\bibitem{Groeneboom1995}
P.~Groeneboom and G.~Jongbloed.
\newblock Isotonic {E}stimation and {R}ates of {C}onvergence in {W}icksell's
  {P}roblem.
\newblock {\em The Annals of Statistics}, 23:1518--1542, 10 1995.

\bibitem{Gusakova2024_2}
A.~Gusakova and M.~in~Wolde-Lübke.
\newblock Poisson-{L}aguerre tessellations.
\newblock {\em arXiv preprint arXiv:2407.01116}, 2024.

\bibitem{Gusakova2022}
A.~Gusakova, Z.~Kabluchko, and C.~Thäle.
\newblock The $\beta$-{D}elaunay tessellation: {D}escription of the model and
  geometry of typical cells.
\newblock {\em Advances in Applied Probability}, 54:1252--1290, 12 2022.

\bibitem{Gusakova2024}
A.~Gusakova, Z.~Kabluchko, and C.~Thäle.
\newblock Sectional {V}oronoi tessellations: {C}haracterization and
  high-dimensional limits.
\newblock {\em Bernoulli}, 30, 5 2024.

\bibitem{Last2018}
G.~Last and M.~Penrose.
\newblock {\em Lectures on the Poisson Process}.
\newblock Cambridge University Press, 2017.

\bibitem{Lautensack2007}
C.~Lautensack.
\newblock {\em Random Laguerre tessellations}.
\newblock PhD thesis, Universität Karlsruhe, 2007.

\bibitem{Lautensack2008b}
C.~Lautensack.
\newblock Fitting three-dimensional {L}aguerre tessellations to foam
  structures.
\newblock {\em Journal of Applied Statistics}, 35:985--995, 9 2008.

\bibitem{Lautensack2008}
C.~Lautensack and S.~Zuyev.
\newblock Random {L}aguerre tessellations.
\newblock {\em Advances in Applied Probability}, 40:630--650, 9 2008.

\bibitem{Liebscher2015}
A.~Liebscher.
\newblock Laguerre approximation of random foams.
\newblock {\em Philosophical Magazine}, 95:2777--2792, 9 2015.

\bibitem{Pachpatte1998}
B.~Pachpatte.
\newblock {\em Inequalities for {D}ifferential and {I}ntegral {E}quations}.
\newblock Mathematics in Science and Technology. Academic Press, 1998.

\bibitem{Robertson1988}
T.~Robertson, F.~T. Wright, and R.~L. Dykstra.
\newblock {\em Order Restricted Statistical Inference}.
\newblock Wiley, 1988.

\bibitem{Rycroft2009}
C.~H. Rycroft.
\newblock V{O}{R}{O}++: {A} three-dimensional {V}oronoi cell library in {C}++.
\newblock {\em Chaos: An Interdisciplinary Journal of Nonlinear Science},
  19:041111, 12 2009.

\bibitem{Schneider2008}
R.~Schneider and W.~Weil.
\newblock {\em Stochastic and Integral Geometry}.
\newblock Springer Berlin Heidelberg, 2008.

\bibitem{Seitl2022}
F.~Seitl, J.~M{\o}ller, and V.~Bene{\v{s}}.
\newblock Fitting three-dimensional {L}aguerre tessellations by hierarchical
  marked point process models.
\newblock {\em Spatial Statistics}, 51:100658, 2022.

\bibitem{Seitl2021}
F.~Seitl, L.~Petrich, J.~Stan{\v{e}}k, C.~E. Krill, V.~Schmidt, and
  V.~Bene{\v{s}}.
\newblock Exploration of {G}ibbs-{L}aguerre {T}essellations for
  {T}hree-{D}imensional {S}tochastic {M}odeling.
\newblock {\em Methodology and Computing in Applied Probability}, 23:669--693,
  6 2021.

\bibitem{Srivastav1963}
R.~P. Srivastav.
\newblock A {N}ote on {C}ertain {I}ntegral {E}quations of {A}bel-{T}ype.
\newblock {\em Proceedings of the Edinburgh Mathematical Society}, 13:271--272,
  6 1963.

\bibitem{Stoyan2021}
D.~Stoyan, V.~Bene{\v{s}}, and F.~Seitl.
\newblock Dependent radius marks of {L}aguerre tessellations: a case study.
\newblock {\em Australian \& New Zealand Journal of Statistics}, 63:19--32, 3
  2021.

\bibitem{vdWeygaert1994}
R.~van~de Weygaert.
\newblock Fragmenting the {U}niverse. 3: The construction and statistics of
  3-{D} {V}oronoi tessellations.
\newblock {\em Astronomy and Astrophysics}, 283:361--406, 1994.

\bibitem{Wu2010}
Y.~Wu, W.~Zhou, B.~Wang, and F.~Yang.
\newblock Modeling and characterization of two-phase composites by {V}oronoi
  diagram in the {L}aguerre geometry based on random close packing of spheres.
\newblock {\em Computational Materials Science}, 47:951--961, 2 2010.

\end{thebibliography}

\end{document}